\numberwithin{equation}{section}
\newtheorem{theorem}{Theorem}[section]
\newtheorem{remark}[theorem]{Remark}
\newtheorem{lemma}[theorem]{Lemma}
\newtheorem{cor}[theorem]{Corollary}
\newtheorem{prop}[theorem]{Proposition}
\newtheorem{definition}[theorem]{Definition}
\newtheorem{assumption}[theorem]{Assumption}
\newcommand{\eps}{\varepsilon }
\newcommand{\rd}{{\mathbb{R}}^{d}}
\newcommand{\ind}[1]{{1\! \! 1}_{#1}  }
\newcommand{\curl}{\text{\rm curl}\,}
\newcommand{\supp}{\text{\rm supp}\,}
\newcommand{\lhs}{{\mathcal{L}}_{(2)}}
\newcommand{\pcal}{\mathcal{P}}
\newcommand{\tcal}{\mathcal{T}}
\newcommand{\vcal}{\mathcal{V}}
\newcommand{\xcal}{\mathcal{X}}
\newcommand{\ycal}{\mathcal{Y}}
\newcommand{\zcal}{\mathcal{Z}}
\newcommand{\Bbold}{\mathbf{B}}
\newcommand{\Cbold}{\mathbf{C}}
\newcommand{\Gbold}{\mathbf{G}}
\newcommand{\ubold}{\mathbf{u}}
\newcommand{\vbold}{\mathbf{v}}
\newcommand{\p}{\mathbb{P}}
\newcommand{\e}{\mathbb{E}}
\newcommand{\norm}[3]{{\|  #1 \| }_{#2}^{#3}}
\newcommand{\Norm}[3]{{\Bigl\|  #1 \Bigr\| }_{#2}^{#3}}
\newcommand{\ilsk}[3]{{( #1 | #2 )}_{#3}}
\newcommand{\dual}[3]{{\langle #1 | #2 \rangle}_{#3}}
\newcommand{\Dual}[3]{{\Bigl< #1 \bigl| #2 \Bigr>}_{#3}}
\newcommand{\dirilsk}[3]{{( \! ( #1 | #2 ) \! ) }_{#3}}
\newcommand{\nnorm}[3]{{|  #1 |}_{#2}^{#3}}
\newcommand{\Nnorm}[3]{{\Bigl|  #1 {\Bigr| }_{#2}^{#3}}}
\newcommand{\lb}{\langle}
\newcommand{\rb}{\rangle}
\newcommand{\ddual}[4]{{}_{#1}\lb #2 |#3 {\rb }_{#4}}
\newcommand{\Hn}{{\Hmath }_{n}}
\newcommand{\Fn}{{F}_{n}}
\newcommand{\diver}{\text{\rm div\,}}
\newcommand{\Pn}{{P}_{n}}
\newcommand{\un}{{u}_{n}}
\newcommand{\Jn}[1]{{J}_{n}^{#1}}
\newcommand{\ball}{\mathbb{B}}
\newcommand{\taun}{{\tau}_{n}}
\newcommand{\mhd}{\widetilde{b}}
\newcommand{\MHD}{\widetilde{B}}
\newcommand{\Hall}{\mathcal{H}}
\newcommand{\tHall}{\widetilde{\Hall}}
\newcommand{\hall}{\mathfrak{h}}
\newcommand{\thall}{\widetilde{\mathfrak{h}}}
\newcommand{\X}{X}
\newcommand{\Xn}{X_n}
\newcommand{\bXn}{{\bar{X}}_{n}}
\newcommand{\bX}{\bar{X}}
\newcommand{\Xast}{{\X }_{\ast }}
\newcommand{\taunR}{{\tau }_{R}^{n}}
\newcommand{\Law}{\mathrm{Law}}
\newcommand{\ARiesz}{{A}_{n}}
\newcommand{\BRiesz}{{\MHD }_{n}}
\newcommand{\RRiesz}{{\tHall }_{n}}
\newcommand{\Vast}{{\Vmath }_{\ast}}
\newcommand{\Cmath}{\mathbb{C}}
\newcommand{\Dmath}{\mathbb{D}}
\newcommand{\Fmath}{\mathbb{F}}
\newcommand{\Hmath}{\mathbb{H}}
\newcommand{\Kmath}{\mathbb{K}}
\newcommand{\Lmath}{\mathbb{L}}
\newcommand{\Smath}{\mathbb{S}}
\newcommand{\Umath}{\mathbb{U}}
\newcommand{\Vmath}{\mathbb{V}}
\newcommand{\Uprime}{{\Umath }^{\prime }}
\newcommand{\Vprime}{{\Vmath }^{\prime }}
\newcommand{\Vastprime}{{\Vmath}_{\ast }^{\prime }}
\newcommand{\Vtest}{{\Vmath }_{1,2}}
\newcommand{\Hsol}[1]{{V}_{#1}}
\newcommand{\Sn}{{S}_{n}}
\newcommand{\Ldn}{{L}^{2}_{n}}
\begin{document}
\title{\bf Existence of a martingale solution of the stochastic Hall-MHD  equations perturbed by Poisson type random forces on ${\mathbb{R}}^{3}$  }
\author{El\.zbieta Motyl 
\footnote{Faculty of Mathematics and Computer Science, University of \L\'{o}d\'{z},  Poland; 
email: elzbieta.motyl@wmii.uni.lodz.pl}}

\maketitle

\begin{abstract}
Stochastic Hall-magne\-to\-hydro\-dynamics  equations  on ${\mathbb{R}}^{3}$ with random forces expressed in terms of the  time homogeneous Poisson random measures are considered. We prove the existence of a global martingale solution. The construction of a solution is based on the Fourier truncation method,
stochastic compactness method and a version of the Skorokhod theorem for non-metric spaces adequate for L\'{e}vy type random fields.
\end{abstract}

\medskip  \noindent
\bf MSC: \rm primary 35Q35,  35Q30;   secondary  60H15, 76M35

\medskip  \noindent
\bf Keywords \rm  Hall-MHD equations; Poisson random measure, martingale solution, compactness method

\medskip
\section{Introduction}

\medskip  \noindent
We consider the following stochastic Hall-magnetohydrodynamics system on $[0,T] \times {\mathbb{R} }^{3}$ with Poisson type noises
\begin{align}
 d\ubold  +& \Bigl[ (\ubold \cdot \nabla  ) \ubold + \nabla p 
- s \,(\Bbold \cdot \nabla ) \Bbold +s \, \nabla \Bigl( \frac{{|\Bbold |}^{2}}{2} \Bigr) 
- {\nu }_{1} \, \Delta \ubold \Bigr] \, dt \notag \\
\; &= \;  \int_{Y_1}{F}_{1}(t,\ubold ({t}^{-});y) \, d{\tilde{\eta }}_{1}(dt,dy),
\label{eq:Hall-MHD_u_Poisson} \\
  d \Bbold  +& \Bigl[ (\ubold \cdot \nabla ) \Bbold - (\Bbold \cdot \nabla ) \ubold 
+\eps \, \curl [(\curl \Bbold ) \times \Bbold ]  
- {\nu }_{2} \, \Delta \Bbold \Bigr] \, dt  \notag \\
\; &= \; \int_{Y_2}{F}_{2}(t,\Bbold ({t}^{-});y) \, d{\tilde{\eta }}_{2}(dt,dy) 
\label{eq:Hall-MHD_B_Poisson} 
\end{align}
with the incompressibility conditions
\begin{equation}
\begin{split}
\diver \ubold \; &= \; 0 \quad \mbox{ and } \quad \diver \Bbold \; = \; 0
\end{split}
\label{eq:Hall-MHD_incompressibility}
\end{equation}
and the initial conditions
\begin{equation}
\ubold (0) \; = \; {\ubold }_{0} \quad \mbox{ and } \quad \Bbold (0) \; = \; {\Bbold }_{0}.
\label{eq:Hall-MHD_ini-cond}
\end{equation}
In this problem $\ubold (t,x) = (u_1,u_2,u_3)(t,x)$, $\Bbold (t,x) = (B_1,B_2,B_3)(t,x)$ for $(t,x)\in [0,T] \times {\mathbb{R} }^{3}$, are  three-dimensional vector fields representing velocity and magnetic fields, respectively, and  $p(t,x) $ is a real valued function representing the pressure of the fluid.
Random forces are defined as the integrals $\int_{Y_1}{F}_{1}(t,\ubold ({t}^{-});y) \, d{\tilde{\eta }}_{1}(dt,dy)$ and  $\int_{Y_2}{F}_{2}(t,\Bbold ({t}^{-});y) \, d{\tilde{\eta }}_{2}(dt,dy)$ with respect to the compensated time homogeneous Poisson random measures
 ${\tilde{\eta }}_{1}$ and ${\tilde{\eta }}_{2}$.
The positive constants ${\nu }_{1},{\nu }_{2}$ and $s$ stand for kinematic viscosity, resistivity and the Hartmann number, respectively.
Let us recall that the $\curl $-operator  for a vector field $\phi : {\mathbb{R} }^{3} \to {\mathbb{R} }^{3}$ is defined by
$
\curl \phi  :=  \nabla \times \phi .
$
The expression
\[
\eps \, \curl [(\curl \Bbold ) \times \Bbold ] 
\]
represents the Hall-term with the Hall parameter $\eps >0$. 
For simplicity we will assume that $s=1$ and $\eps =1$.

\medskip  \noindent
Mathematical treatment of Hall-MHD equations, which are important in the physics of plasma, were introduced in \cite{Acheritogaray+Degond'11}. Since then this subject receives an increasing attention. Let us mention several papers where deterministic Hall-MHD model is considered, e.g. \cite{Chae+Degond+Liu'14}, 
\cite{Chae+Lee'14}, \cite{Chae+Schonbek'13}, \cite{Chae+Wan+Wu'15}.
Stochastic Hall-MHD equtions perturbed by Wiener noise were studied in \cite{Yamazaki'17}, \cite{Yamazaki'19a}, \cite{Yamazaki'19} and \cite{EM'21_ArX_Hall-MHD_Gauss}.

\medskip  \noindent
Recently, Yamazaki and Mohan \cite{Yamazaki+Mohan'19} proved the existence and uniqueness of a local in time solution for the stochastic Hall-MHD system forced by the L\'{e}vy noise. The solution is global provided that the initial data is sufficiently small. 

\medskip  \noindent
In the present paper we are interested in the existence of a global martingale solution of problem \eqref{eq:Hall-MHD_u_Poisson}-\eqref{eq:Hall-MHD_ini-cond} (without limitations on the smallness of the initial data). 
Proceeding similarly as in \cite{EM'21_ArX_Hall-MHD_Gauss}, where the Gaussian type noise terms are considered,
we rewrite problem \eqref{eq:Hall-MHD_u_Poisson}-\eqref{eq:Hall-MHD_ini-cond}  in the form of the  equation
\begin{equation}
\begin{split} 
&  \X (t)  + \int_{0}^{t} \bigl[  \mathcal{A} \X (s)  + \MHD  (\X (s)) +\tHall (\X (s)) \bigr] \, ds  \\
& \qquad   \; = \; {\X }_{0}
 + \int_{0}^{t}\int_{Y}  F(s,\X ({s}^{-});y ) \, \tilde{\eta }(ds,dy)  , \qquad t \in [0,T] ,   
\end{split}  \label{eq:Hall-MHD_functional_Intro}
\end{equation} 
where $\X = (\ubold ,\Bbold )$ and ${\X }_{0} := ({\ubold }_{0}, {\Bbold }_{0})$.
Besides,  $\mathcal{A} $, $\MHD $ and $\tHall $ are the maps corresponding to the Stokes-type operators, the MHD-term and the Hall-term, respectively, defined in Section \ref{sec:Hall-MHD_funct-setting}.
In fact,  the deterministic part of problem \eqref{eq:Hall-MHD_functional_Intro} is the same as in \cite{EM'21_ArX_Hall-MHD_Gauss}. Analysis of the stochastic part is different.

\medskip  \noindent
The main result is stated in Theorem \ref{th:mart-sol_existence}.
We prove the existence of a global martingale solution of problem \eqref{eq:Hall-MHD_functional_Intro} understood as a system $(\Omega ,\mathcal{F} , \Fmath ,\p ,\eta ,\X )$, where $(\Omega ,\mathcal{F} , \Fmath , \p )$ is a filtered probability space, $\eta $ is a time-homogeneous Poisson random measure and $\X = {\{ \X (t)\} }_{t \in [0,T]}$ is a stochastic process. The trajectories of the process $\X $ belong, with probability $1$, to space of c\`{a}dl\`{a}g functions $\Dmath ([0,T];{\Hmath }_{w})$, where the subscript $w$ indicates the weak topology in $\Hmath $, see Definition \ref{def:mart-sol}. 
Besides, we prove that the stochastic process $\X $ satisfies, in particular, the energy  
inequality
\[
\e \Bigl[ \sup_{t\in [0,T]} \nnorm{\X (t)}{\Hmath }{2} + \int_{0}^{T} \norm{\X (t)}{\Vmath }{2} \Bigr] < \infty 
\] 
The spaces $\Hmath \subset {\Lmath }^{2}$ and $\Vmath \subset {\Hmath }^{1}$, defined in Section \ref{sec:Hall-MHD_funct-setting}, are products of appropriate spaces of divergence free vector fields corresponding to the incompressibility conditions from \eqref{eq:Hall-MHD_incompressibility}.  The problem of uniqueness  is still open.

\medskip  \noindent
Similarly to \cite{EM'21_ArX_Hall-MHD_Gauss}, the construction of a solution is based on the approximation which uses the Fourier transform. Problem \eqref{eq:Hall-MHD_functional_Intro} is approximated by a sequence of the  truncated SPDEs perturbed by appropriate Poisson type random forces, in the infinite dimensional Hilbert spaces ${\Hmath }_{n}$, $n\in \mathbb{N} $. The solutions $\Xn $ of the truncated equations generate a tight sequence of probability measures $\{ \Law (\Xn ), n \in \mathbb{N}  \} $ on appropriate functional space $\zcal $ defined by \eqref{eq:Z_cadlag} in Section \ref{sec:comp-tight}. In comparison to \cite{EM'21_ArX_Hall-MHD_Gauss} in the definition of the space $\zcal $, the spaces of continuous functions are replaced by the spaces of c\`{a}dl\`{a}g functions with the Skorokhod topology, see Section \ref{sec:comp-tight} and Appendix \ref{sec:Cadlag_functions}. Besides, the construction of the new stochastic basis and new stochastic process is different from that used in \cite{EM'21_ArX_Hall-MHD_Gauss}. It is based on the version of the Skorokhod theorem  adequate for Poisson type noises. It is closely related to the techniques used in \cite{EM'13} in the context of Navier-Stokes equation and in \cite{EM'14} for other hydrodynamic equations and uses Corollary \ref{cor:Skorokhod_J,B,H} stated in Appendix \ref{sec:Skorokhod_th}. Because of the Hall term, the framework considered in  \cite{EM'14} does not cover directly the Hall-MHD system. 

\medskip  \noindent
In Section \ref{sec:general} we consider also the Hall-MHD equation with more general L\'{e}vy noise, i.e. 
\begin{equation*}
\begin{split} 
&  \X (t)  +  \int_{0}^{t} \bigl[  \mathcal{A} \X (s)  + \MHD  (\X (s)) +\tHall (\X (s)) \bigr] \, ds  \\
&   \; = \; {\X }_{0} + \int_{0}^{t}  f (s) \, ds + \int_{0}^{t}\int_{Y}  F(s,\X ({s}^{-});y ) \, \tilde{\eta }(ds,dy)
 + \int_{0}^{t} G (s,\X (s) ) \, dW(s)  , \quad t \in [0,T] ,
\end{split} 
\end{equation*}
where $f$ represents deterministic forces and $W$ is a Wiener process, and state the theorem about the existence of a martingale solution. 

\medskip  \noindent
The Fourier truncation method is very useful in the study of differential equations. 
Let us recall the paper \cite{Feff+McCorm+Rob+Rod'2014}, where Fefferman and co-authors use this method in deterministic MHD equations on ${\mathbb{R}}^{d}$, $d=2,3$. The same idea, referred to as the Friedrichs method, is used by Bahouri, Chemin and Danchin \cite[Section 4]{Bahouri+Chemin+Danchin'11} to study other class of deterministic differential equations.

\medskip  \noindent
In the context of SPDEs, Mohan and Sritharan \cite{Mohan+Sritharan'16} apply the Fourier truncation method to study  the stochastic Euler equation with L\'{evy} noise. This method is also used by Manna, Mohan and Sritharan \cite{Manna+Mohan+Srith'2017} in the stochastic  MHD equations with L\'{e}vy noise and 
by Yamazaki and Mohan in \cite{Yamazaki+Mohan'19}.
Moreover, Brze\'{z}niak and Dhariwal \cite{Brze+Dha'20} apply the truncated approximation in the stochastic tamed Navier-Stokes equations.

\medskip  \noindent
The paper is organized  as follows. In Section \ref{sec:funct-setting} we recall the functional setting of the Hall-magnetohydrodynamics equations. The main result is stated in Section \ref{sec:statement}. 
In Section \ref{sec:truncated_eq} we consider approximate equations and prove a priori estimates. 
Compactness and tightness theorems used in the proof of the main theorem are presented in Section \ref{sec:comp-tight}. Section \ref{sec:existence} is devoted to the proof of the existence of a global martingale solution. Version of the Skorokhod theorem used in the proof of Theorem \ref{th:mart-sol_existence} is recalled in Appendix 
\ref{sec:Skorokhod_th}.
Some auxiliary results related to the Fourier analysis are contained in Appendix \ref{sec:Fourier_truncation}.

\medskip
\section{Functional setting of the Hall-magnetohydrodynamics equations} \label{sec:funct-setting}

\medskip  \noindent
Let ${\mathcal{C} }^{\infty }_{c} = {\mathcal{C} }^{\infty }_{c}({\mathbb{R} }^{3},{\mathbb{R} }^{3})$ denote the space of all ${\mathbb{R} }^{3}$-valued functions of class ${\mathcal{C} }^{\infty }$ with compact supports in ${\mathbb{R} }^{3}$, and let
\begin{align}
&\mbox{ $\vcal := \{ u \in {\mathcal{C} }^{\infty }_{c} ({\mathbb{R} }^{3},{\mathbb{R} }^{3}): \; \diver u =0  \} $ },
\label{eq:vcal} \\
& \mbox{ $H := $ the closure of $\vcal $ in $L^2({\mathbb{R} }^{3},{\mathbb{R} }^{3})$}, 
\label{eq:H}\\
&\mbox{ $V := $ the closure of $\vcal $ in $H^1({\mathbb{R} }^{3},{\mathbb{R} }^{3})$}.
\label{eq:V}
\end{align}
In the space $H$ we consider the inner product and the norm inherited from $L^2({\mathbb{R} }^{3},{\mathbb{R} }^{3})$
and denote them by $\ilsk{\cdot }{\cdot }{H}$ and $\nnorm{\cdot }{H}{}$, respectively, i.e.
\[
\ilsk{u}{v}{H} := \ilsk{u}{v}{L^2} , \qquad \nnorm{u}{H}{} := \nnorm{u}{L^2}{}, \qquad u,v \in H .
\]
In the space $V$ we consider the inner product inherited from ${H}^{1}({\mathbb{R} }^{3},{\mathbb{R} }^{3})$, i.e.
\begin{equation*}
\ilsk{u}{v}{V} := \ilsk{u}{v}{H}  + \ilsk{\nabla u}{\nabla v}{L^2} , \qquad u,v \in V ,
\end{equation*}
where 
\begin{equation*}
 \ilsk{\nabla u}{\nabla v}{L^2} 
= \sum_{i=1}^{3} \int_{{\mathbb{R} }^{3}} \frac{\partial u}{\partial x_i} \cdot \frac{\partial v}{\partial x_i} \, dx ,  
\end{equation*}
and the norm induced by $\ilsk{\cdot }{\cdot }{V}$, i.e.
\begin{equation*}
\norm{u}{V}{} \; := \; \bigl( \nnorm{u}{H}{2} + \nnorm{\nabla u}{L^2}{2} {\bigr) }^{\frac{1}{2}} .
\end{equation*}
For any $m\ge 0$ consider the following  of Hilbert space
\begin{equation}
{V}_{m} \; := \;  \mbox{the closure of $\vcal $ in ${H}^{m}({\mathbb{R} }^{3} , {\mathbb{R} }^{3} )$} 
\label{eq:V_m}
\end{equation}
with the inner product inherited from the space ${H}^{m}({\mathbb{R} }^{3} , {\mathbb{R} }^{3})$.
Note that, ${V}_{0}=H$ and ${V}_{1}=V$.

\medskip
\noindent
\bf Notations. \rm 
Let $(X,\norm{\cdot }{X}{}) $, $(Y,\norm{\cdot }{Y}{}) $ be two normed spaces. By $\mathcal{L} (X,Y)$ we denote the space of all bounded linear operators from $X$ to $Y$. If $Y=\mathbb{R} $, the $X':= \mathcal{L} (X, \mathbb{R} )$ is called the dual space  of $X$. The standard duality pairing is denoted by $\ddual{X'}{\cdot }{\cdot }{X}$. If no confusion seems likely we omit the subscripts $X'$ and $X$ and write $\dual{\cdot }{\cdot }{}$.

\medskip
\subsection{Spaces used in the Hall-MHD equations} \label{sec:Hall-MHD_funct-setting}

\medskip  \noindent
Spaces used in the theory of Hall-magnetohydrodynamics equations are products of the spaces $H$ and $V$ defined by \eqref{eq:H} and \eqref{eq:V}, respectively. Namely, 
\begin{equation}
\Hmath \; := \; H \times H , \qquad \Vmath := V \times V 
, \qquad   {\Vmath }^{\prime } \; := \;  \text{the dual space of $\Vmath $}
\label{eq:Hall-MHD_H-V}
\end{equation}
The spaces $\Hmath $ and $\Vmath $ are Hilbert spaces with the following inner products
\begin{align*}
\ilsk{\phi }{\psi }{\Hmath }  
\; &:= \;  \ilsk{\ubold }{\vbold }{{L}^{2}} + \ilsk{\Bbold }{\Cbold }{{L}^{2}} 
\end{align*}
for all $ \phi =(\ubold ,\Bbold ), \, \, \;  \psi =(\vbold ,\Cbold ) \in \Hmath $ ,
and
\begin{equation*}
\begin{split}
\ilsk{\phi }{\psi }{\Vmath }
\; &:= \;   \ilsk{\phi }{\psi }{\Hmath }   + \dirilsk{\phi }{\psi }{}   
\end{split}
\end{equation*} 
for all $ \phi  =(\ubold ,\Bbold ) , \;  \psi =(\vbold ,\Cbold )  \in \Vmath $,
where
\begin{equation}
\dirilsk{\phi }{\psi }{} \; := \; {\nu }_{1} \, \ilsk{\nabla \ubold }{\nabla \vbold }{L^2} 
+{\nu }_{2} \, \ilsk{\nabla \Bbold }{\nabla \Cbold }{L^2} .
\label{eq:il-sk_Dirichlet+curl}  
\end{equation}
In the spaces $\Hmath $ and $\Vmath $ we consider the norms induced by the inner products $\ilsk{\cdot }{\cdot }{\Hmath }$ and $\ilsk{\cdot }{\cdot }{\Vmath }$, respectively, i.e.
$\nnorm{\phi }{\Hmath }{2}:= \ilsk{\phi }{\phi }{\Hmath } $ for $\phi \in \Hmath $, and
\begin{equation}
\norm{\phi }{\Vmath }{2} \; = \; \nnorm{\phi }{\Hmath }{2} + \norm{\phi }{}{2},
\label{eq:Hall-MHD_V-norm} 
\end{equation}
where
\begin{equation*}
\norm{\phi }{}{2} \; := \; \dirilsk{\phi }{\phi }{}, \qquad \phi \in \Vmath . 
\end{equation*}

\medskip  
\noindent
Let $\mathcal{A} $  be the operator  defined by 
\begin{equation}
\dual{\mathcal{A} \phi }{\psi }{} \; = \;  \dirilsk{\phi }{\psi }{}, \qquad \phi ,\psi \in \Vmath , 
\label{eq:A_acal_rel} 
\end{equation}
where $\dirilsk{\cdot  }{\cdot }{}$ is given by \eqref{eq:il-sk_Dirichlet+curl} .

\medskip  
\begin{remark}  \label{rem:Acal-term_properties} 
It is clear that  $\mathcal{A} \in \mathcal{L} (\Vmath ,\Vmath ')$
and
\begin{equation}
\nnorm{\mathcal{A} \phi }{\Vmath '}{} \; \le \; \norm{\phi }{}{}, \qquad \phi \in \Vmath .
\label{eq:Acal_norm}
\end{equation}
\end{remark}

\medskip  \noindent
For $m_1, m_2 \ge 0 $ let us define
\begin{equation}
{\Vmath }_{m_1,m_2} \; := \; {V}_{m_1} \times {V}_{m_2},
\label{eq:Vmath_m1,m2}
\end{equation}
where ${V}_{m_1}, {V}_{m_2}$ are the spaces defined by \eqref{eq:V_m}. In ${\Vmath }_{m_1,m_2}$ we consider the product norm
\begin{equation}
\norm{\phi }{{m_1,m_2} }{2}  \; := \; 
\norm{\ubold }{V_{m_1}}{2} + \norm{\Bbold }{V_{m_2}}{2}
\label{eq:Vmath_m1,m2-norm} 
\end{equation}
for all $\phi =(\ubold , \Bbold ) \in {\Vmath }_{m_1,m_2}$.
In the case when $m_1=m_2=:m$ we denote
\begin{equation}
{\Vmath }_{m} \; := \; V_m \times V_m
\quad \mbox{ and } \quad \norm{\cdot }{m}{} \; := \; \norm{\cdot }{m,m}{} .
\label{eq:Vmath_m}
\end{equation}
Note that if $m=1$, then ${\Vmath }_{1}=\Vmath $ and $\norm{\cdot }{1}{} = \norm{\cdot }{\Vmath }{}$. 

\medskip  \noindent
By ${L}^{2}_{loc}({\mathbb{R} }^{3},{\mathbb{R} }^{3})$, we denote the space of all Lebesgue measurable functions $v:{\mathbb{R} }^{3} \to {\mathbb{R} }^{3}$ such that $\int_{K} {|v(x)|}^{2} \, dx < \infty $
for every compact subset $K \subset {\mathbb{R} }^{3}$,  equipped with the Fr\'{e}chet topology generated by the family of seminorms
\begin{equation}
{p}_{R}(v) \; := \; \Bigl( \int_{{\mathcal{O} }_{R}} {|v(x)|}^{2} \, dx {\Bigr) }^{\frac{1}{2}}, \qquad R \in \mathbb{N} ,
\label{eq:p_R-seminorms}
\end{equation} 
where ${({\mathcal{O} }_{R})}_{R \in \mathbb{N} }$ is an increasing sequence of open bounded subsets of ${\mathbb{R} }^{3}$ with smooth boundaries and such that $\bigcup_{R\in \mathbb{N} } {\mathcal{O} }_{R} = {\mathbb{R} }^{3}$.

\medskip \noindent
By ${H}_{loc}$, we denote the space $H$ defined by \eqref{eq:H},  endowed with the Fr\'{e}chet topology inherited from the space
${L}^{2}_{loc}({\mathbb{R} }^{3},{\mathbb{R} }^{3})$.

\medskip  \noindent
Let us fix $T>0$. By ${L}^{2}(0,T;{H}_{loc})$ we denote the space of measurable functions 
 $ u :[0,T] \to H   $ such that for all $ R \in \mathbb{N} $
\begin{equation}
{p}_{T,R}(u) \; = \; \Bigl(  \int_{0}^{T} \int_{{\mathcal{O} }_{R}}  {|u  (t,x)|}^{2}  \, dxdt {\Bigr) }^{\frac{1}{2}} < \infty  .
\label{eq:seminorms-L^2(0,T;H_loc)} 
\end{equation}
In ${L}^{2}(0,T;{H}_{loc})$ we consider the topology   generated by the seminorms 
$({p}_{T,R}{)}_{R\in \mathbb{N} } $ defined by \eqref{eq:seminorms-L^2(0,T;H_loc)}.  

\medskip  \noindent
By ${L}^{2}(0,T;{\Hmath }_{loc})$ we denote the space of measurable functions 
 $ \phi :[0,T] \to \Hmath    $ such that for all $ R \in \mathbb{N} $
\begin{equation}
{p}_{T,R}(\phi) \; := \; \Bigl(  \int_{0}^{T} \int_{{\mathcal{O} }_{R}} 
\bigl[ {|\ubold (t,x)|}^{2}  + {|\Bbold (t,x)|}^{2} \bigr] \, dxdt {\Bigr) }^{\frac{1}{2}} < \infty  .
\label{eq:seminorms} 
\end{equation}
where $\phi =(\ubold ,\Bbold )$.
In the space ${L}^{2}(0,T;{\Hmath }_{loc})$ we consider the topology   generated by the seminorms  $({p}_{T,R}{)}_{R\in \mathbb{N} } $ defined by \eqref{eq:seminorms}.   

\medskip  \noindent
\subsection{The form $\mhd$ and the operator $\MHD$}

\medskip
We recall the standard maps used in the theory of magnetohydrodynamic equation, see 
Sermange and Temam \cite{Sermange+Temam'83} and Sango \cite{Sango'10}.
Let us consider the following tri-linear form
\begin{equation}  
b(u,w,v ) = \int_{{\mathbb{R} }^{3}}\bigl( u \cdot \nabla w \bigr) v \, dx .
\label{eq:b-form}
\end{equation}
For the properties of the form $b$, see Temam \cite{Temam'79}.
Using the form $b$  defined by \eqref{eq:b-form} we will consider the tri-linear  form  $\mhd$ on $\Vmath \times \Vmath \times \Vmath $, where $\Vmath $ is defined by \eqref{eq:Hall-MHD_H-V},  Namely,
\begin{align*}
\mhd ({\phi }^{(1)},{\phi }^{(2)},{\phi }^{(3)})
\; := \;  & b({\ubold }^{(1)}, {\ubold }^{(2)}, {\ubold }^{(3)})
- b({\Bbold }^{(1)}, {\Bbold }^{(2)}, {\ubold }^{(3)})  \\
& + b({\ubold }^{(1)}, {\Bbold }^{(2)}, {\Bbold }^{(3)})
 -  b({\Bbold }^{(1)}, {\ubold }^{(2)}, {\Bbold }^{(3)}) ,
\end{align*}
where ${\phi }^{(i)} = ({\ubold }^{(i)},{\Bbold }^{(i)}) \in \Vmath $, $i=1,2,3$. 
Since the form $b$ is continuous on $V\times V \times V$,   the form $\mhd$ is continuous on $\Vmath \times \Vmath \times \Vmath $.
Moreover,  the form $\mhd$ has the following properties
\begin{align*} 
&  \mhd ({\phi }^{(1)},{\phi }^{(2)},{\phi }^{(3)})
\; = \; -  \mhd ({\phi }^{(1)},{\phi }^{(3)},{\phi }^{(2)}) , \qquad 
{\phi }^{(i)} \in \Vmath , \quad i=1,2,3  
\end{align*}
and in particular
\begin{align*}  
& \mhd ({\phi }^{(1)},{\phi }^{(2)},{\phi }^{(2)}) \; = \; 0 , \qquad {\phi }^{(1)}, {\phi }^{(2)} \in \Vmath .
\end{align*}        
Now, let us define a bilinear map $\MHD$ by 
\begin{equation}   
\MHD (\phi ,\psi ) \; := \;  \mhd (\phi ,\psi ,\cdot )  ,  \qquad \qquad \phi ,\psi  \in \Vmath . 
\label{eq:MHD-map}
\end{equation}

\medskip  \noindent
We will also use the notation $ \MHD (\phi ) :=  \MHD (\phi ,\phi ) $.

\medskip
\noindent
Let us recall   properties of the map $\MHD$ stated in \cite{EM'14}.

\medskip
\begin{lemma} \label{lem:MHD-term_properties}
\rm (See \cite[Lemma 6.4]{EM'14}) \it 
\begin{description}
\item[(i) ] There exists a constant ${c}_{\MHD } >0 $ such that 
\[
\nnorm{ \MHD(\phi , \psi )}{\Vmath '}{} 
\;  \le \;  {c}_{\MHD }\norm{\phi }{\Vmath }{} \norm{\psi }{\Vmath }{}  , \qquad \phi ,\psi  \in \Vmath . 
\]
In particular, the map $\MHD: \Vmath \times \Vmath \to \Vmath '$ is bilinear and continuous.
Moreover, 
\[
\dual{\MHD(\phi ,\psi )}{\theta  }{} \; = \; - \dual{\MHD(\phi ,\theta  )}{\psi  }, 
\qquad \phi ,\psi , \theta  \in \Vmath , 
\]
and, in particular,
\begin{equation}
\dual{\MHD (\phi )}{\phi }{} \; = \; 0 , 
\qquad \phi  \in \Vmath  . 
\label{eq:MHD-map_perp}
\end{equation}
\item[(ii) ] The mapping $\MHD$ is locally Lipschitz continuous on the space $\Vmath $, i.e.
for every $r>0 $ there exists a constant ${L}_{r}>0$ such that 
\[
\nnorm{ \MHD(\phi )- \MHD (\tilde{\phi } )}{\Vmath '}{} 
\; \le \; {L}_{r}  \norm{\phi -\tilde{\phi } ) }{\Vmath }{} , \qquad \phi ,\tilde{\phi } \in \Vmath ,
  \quad \norm{\phi }{\Vmath }{} , \norm{\tilde{\phi }}{\Vmath }{} \le r .
\]   
\item[(iii) ] If $m> \frac{5}{2}$, then $\MHD$ can be extended to the bilinear mapping from $\Hmath \times \Hmath $ to ${\Vmath }_{m}'$ (denoted still by $\MHD$) such that
\begin{equation}
\nnorm{ \MHD(\phi , \psi )}{{\Vmath }_{m}'}{} 
\; \le \; {c}_{\MHD }(m) \nnorm{\phi }{\Hmath }{} \nnorm{\psi }{\Hmath }{} , \qquad \phi ,\psi  \in \Hmath  ,
\label{eq:MHD-map_est-H-H} 
\end{equation}
where ${c}_{\MHD }(m)$ is a positive constant.
\end{description}
\end{lemma}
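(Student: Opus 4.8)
The plan is to reduce all three assertions to the mapping properties of the classical Navier--Stokes trilinear form $b$ from \eqref{eq:b-form}, since every summand of $\mhd$, hence of $\MHD$, is one of the forms $b(\cdot,\cdot,\cdot)$. First I would record the two facts about $b$ that do all the work. The first is the Sobolev--Hölder continuity bound $|b(u,w,v)| \le C\,\norm{u}{V}{}\,\norm{w}{V}{}\,\norm{v}{V}{}$ on $V\times V\times V$, obtained by writing $b(u,w,v)=\int_{\mathbb{R}^3}(u\cdot\nabla w)\,v\,dx$, applying Hölder with exponents $\tfrac16+\tfrac12+\tfrac13=1$, and using the three-dimensional embeddings $\nnorm{u}{{L}^{6}}{}\le C\norm{u}{V}{}$ and $\nnorm{v}{{L}^{3}}{}\le C\norm{v}{V}{}$. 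The second is the integration-by-parts identity $b(u,w,v)=-b(u,v,w)$, valid for weakly divergence-free $u$, which gives $b(u,w,w)=0$.

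For part (i) I would observe that continuity is immediate: expanding $\dual{\MHD(\phi,\psi)}{\theta}{}=\mhd(\phi,\psi,\theta)$ as the sum of four $b$-terms and applying the continuity bound, each term is controlled by $C\,\norm{\phi}{\Vmath}{}\,\norm{\psi}{\Vmath}{}\,\norm{\theta}{\Vmath}{}$ (using $\norm{\ubold}{V}{},\norm{\Bbold}{V}{}\le\norm{\phi}{\Vmath}{}$, etc.); taking the supremum over $\norm{\theta}{\Vmath}{}\le 1$ yields the asserted bound with some $c_{\MHD}$. The antisymmetry $\dual{\MHD(\phi,\psi)}{\theta}{}=-\dual{\MHD(\phi,\theta)}{\psi}{}$ is merely a restatement of the skew-symmetry of $\mhd$ in its last two arguments already recorded before the lemma, and putting $\psi=\theta=\phi$ gives \eqref{eq:MHD-map_perp}.

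Part (ii) then follows from (i) by bilinearity: since $\MHD(\phi)-\MHD(\tilde{\phi})=\MHD(\phi-\tilde{\phi},\phi)+\MHD(\tilde{\phi},\phi-\tilde{\phi})$, part (i) gives $\nnorm{\MHD(\phi)-\MHD(\tilde{\phi})}{\Vmath '}{}\le c_{\MHD}\bigl(\norm{\phi}{\Vmath}{}+\norm{\tilde{\phi}}{\Vmath}{}\bigr)\norm{\phi-\tilde{\phi}}{\Vmath}{}$, so the constant $L_r:=2c_{\MHD}\,r$ works on the ball of radius $r$.

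The only real work is part (iii), and this is where the threshold $m>\tfrac52$ enters; I expect it to be the main obstacle. Here $\phi=(\ubold,\Bbold)$ and $\psi=(\vbold,\Cbold)$ lie only in $\Hmath$, so $\nabla\psi$ is unavailable, and the idea is to move every derivative onto the smooth test field $\theta=(\theta_1,\theta_2)\in{\Vmath }_{m}$. Applying $b(a,c,d)=-b(a,d,c)$ in each of the four terms rewrites $\mhd(\phi,\psi,\theta)$ as $-b(\ubold,\theta_1,\vbold)+b(\Bbold,\theta_1,\Cbold)-b(\ubold,\theta_2,\Cbold)+b(\Bbold,\theta_2,\vbold)$, a sum of terms $b(a,\theta_i,c)=\int_{\mathbb{R}^3}(a\cdot\nabla\theta_i)\,c\,dx$ with $a,c\in{L}^{2}$, each estimated by $|b(a,\theta_i,c)|\le\nnorm{a}{{L}^{2}}{}\,\nnorm{\nabla\theta_i}{{L}^{\infty}}{}\,\nnorm{c}{{L}^{2}}{}$. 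The decisive step is the embedding $H^{m-1}(\mathbb{R}^3)\hookrightarrow{L}^{\infty}(\mathbb{R}^3)$, valid exactly when $m-1>\tfrac32$, which yields $\nnorm{\nabla\theta_i}{{L}^{\infty}}{}\le C\,\norm{\theta_i}{H^m}{}$. Summing the four terms and bounding the ${L}^{2}$-norms by $\nnorm{\phi}{\Hmath}{}$ and $\nnorm{\psi}{\Hmath}{}$ gives $|\mhd(\phi,\psi,\theta)|\le c_{\MHD}(m)\,\nnorm{\phi}{\Hmath}{}\,\nnorm{\psi}{\Hmath}{}\,\norm{\theta}{m}{}$, whence \eqref{eq:MHD-map_est-H-H} after taking the supremum over $\norm{\theta}{m}{}\le 1$. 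Since the bound involves only the $\Hmath$-norms and $\vcal$ is dense in $\Hmath$, the map extends uniquely and continuously to $\Hmath\times\Hmath$; the subtle points to be careful about are justifying the integration by parts (legitimate because elements of $H$ are weakly divergence-free, made rigorous by first taking $\theta$ in $\vcal$) and verifying that this extension agrees with the original $\MHD$ on $\Vmath\times\Vmath$.
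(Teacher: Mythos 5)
Your proof is correct and follows the standard route: the paper itself gives no argument for this lemma, citing \cite[Lemma 6.4]{EM'14} instead, and your reduction to the Navier--Stokes trilinear form $b$ — Hölder with $L^6\times L^2\times L^3$ plus the Sobolev embeddings for (i), bilinearity for (ii), and throwing the derivative onto the test function together with $H^{m-1}(\mathbb{R}^3)\hookrightarrow L^\infty$ for (iii) — is exactly the argument underlying that reference. The only cosmetic point is that $\norm{\ubold }{V}{}\le \norm{\phi }{\Vmath }{}$ holds only up to a constant depending on ${\nu }_{1},{\nu }_{2}$ because of the weights in \eqref{eq:il-sk_Dirichlet+curl}, which is harmlessly absorbed into ${c}_{\MHD }$.
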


\medskip  \noindent
We will use also the following convergence result for the nonlinear term $\MHD $.

\medskip
\begin{cor} \label{cor:MHD-map_conv-aux} (See \cite[Corollary 2.8]{EM'21_ArX_Hall-MHD_Gauss}.)
 Let $\phi ,\psi  \in {L}^{2}(0,T;\Hmath )$ and let $({\phi }_n) , ({\psi }_n) \subset {L}^{2}(0,T;\Hmath )$ be two sequence bounded in ${L}^{2}(0,T;\Hmath )$ and  such that 
\[
{\phi }_{n} \; \to \;  \phi   \quad \mbox{ and } \quad {\psi }_{n} \to \psi   \quad   \mbox{ in } \quad {L}^{2}(0,T;{\Hmath }_{loc}).
\] 
If $m > \frac{5}{2}$, then for all $t \in [0,T]$ and all 
$\varphi  \in {\Vmath }_{m} $:
\[
\lim_{n \to \infty } \int_{0}^{t} \dual{\MHD ({\phi }_{n}(s),{\psi }_{n}(s))}{\varphi }{} \, ds 
\; = \; \int_{0}^{t} \dual{\MHD (\phi (s),\psi (s))}{\varphi }{} \, ds ,
\] 
where ${\Vmath }_{m}$ is the space defined by \eqref{eq:Vmath_m}.
\end{cor}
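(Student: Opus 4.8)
The plan is to combine the bilinearity and antisymmetry of $\mhd $ with the decay of the test function $\varphi $ at infinity. By Lemma \ref{lem:MHD-term_properties}(iii) the map $\MHD $ extends to $\Hmath \times \Hmath \to {\Vmath }_{m}'$ for $m>\frac{5}{2}$, and for $\phi ,\psi \in \Hmath $ and $\varphi \in {\Vmath }_{m}$ this extension is obtained by moving the derivative onto the smooth argument, i.e.
\[
\dual{\MHD (\phi ,\psi )}{\varphi }{} \; = \; \mhd (\phi ,\psi ,\varphi ) \; = \; - \mhd (\phi ,\varphi ,\psi ) ,
\]
which is a finite sum of integrals $\int_{{\mathbb{R} }^{3}} (\alpha \cdot \nabla \varphi_{\beta })\, \gamma \, dx$, where $\alpha ,\gamma $ are components of $\phi ,\psi $. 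This is the crucial reduction: it expresses the pairing through the products $\alpha \gamma \in {L}^{1}$ of the ${L}^{2}$-factors, with all derivatives falling on $\varphi $, so that no control of $\nabla \phi $ or $\nabla \psi $ is needed. Splitting by bilinearity,
\[
\MHD (\phi_n ,\psi_n ) - \MHD (\phi ,\psi ) \; = \; \MHD (\phi_n -\phi ,\psi_n ) + \MHD (\phi ,\psi_n -\psi ) ,
\]
it then suffices to show that each of $\int_{0}^{t}\dual{\MHD (\phi_n -\phi ,\psi_n )}{\varphi }{}\, ds$ and $\int_{0}^{t}\dual{\MHD (\phi ,\psi_n -\psi )}{\varphi }{}\, ds$ tends to $0$, and by the reduction above each is dominated by a constant times $\int_{0}^{t}\int_{{\mathbb{R} }^{3}} |a(s,x)|\,|b(s,x)|\,|\nabla \varphi (x)| \, dx\, ds$, with $(a,b)=(\phi_n -\phi ,\psi_n )$ in the first case and $(a,b)=(\phi ,\psi_n -\psi )$ in the second.

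The convergence then follows from a localization argument. Since $m>\frac{5}{2}$, the Sobolev embedding ${H}^{m}({\mathbb{R} }^{3}) \hookrightarrow {C}^{1}_{0}({\mathbb{R} }^{3})$ yields $\nabla \varphi \in {L}^{\infty }$ together with $\nabla \varphi (x) \to 0$ as $|x| \to \infty $. Fix $\eps >0$ and choose $R$ so large that $\sup_{x \notin {\mathcal{O} }_{R}} |\nabla \varphi (x)| < \eps $, and split the space integral over ${\mathcal{O} }_{R}$ and its complement. On the complement, Cauchy--Schwarz and the uniform bounds of $\phi_n ,\psi_n ,\phi ,\psi $ in ${L}^{2}(0,T;\Hmath )$ give a contribution $\le C\eps $, uniformly in $n$. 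On ${\mathcal{O} }_{R}$ we use $\norm{\nabla \varphi }{{L}^{\infty }}{}$ and Cauchy--Schwarz in the form $\int_{0}^{t}\int_{{\mathcal{O} }_{R}} |a|\,|b| \, dx\, ds \le {p}_{T,R}(a)\, {p}_{T,R}(b)$; here one factor is the seminorm of $\phi_n -\phi $ (resp. $\psi_n -\psi $), which tends to $0$ by the assumed convergence in ${L}^{2}(0,T;{\Hmath }_{loc})$, while the other factor stays bounded. Letting $n \to \infty $ and then $\eps \to 0$ completes the argument.

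I expect the main obstacle to be exactly the mismatch between the hypotheses: we are given only local ${L}^{2}$-convergence together with global ${L}^{2}$-boundedness, and these cannot be fed directly into the global estimate $\nnorm{\MHD (\phi ,\psi )}{{\Vmath }_{m}'}{} \le {c}_{\MHD }(m)\,\nnorm{\phi }{\Hmath }{}\,\nnorm{\psi }{\Hmath }{}$ of Lemma \ref{lem:MHD-term_properties}(iii), since $\nnorm{\phi_n -\phi }{\Hmath }{}$ need not vanish. The decay of $\nabla \varphi $ at infinity, a consequence of $m>\frac{5}{2}$, is precisely what reconciles the two, allowing the uncontrolled tail to be made uniformly small while the convergence is exploited on a fixed compact set ${\mathcal{O} }_{R}$. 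The only routine point to check is that transferring all derivatives onto $\varphi $ is legitimate for $\phi ,\psi $ merely in $\Hmath $, which is guaranteed by the definition of the extended map in Lemma \ref{lem:MHD-term_properties}(iii).
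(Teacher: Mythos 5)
Your argument is correct. Note that this paper does not actually prove Corollary \ref{cor:MHD-map_conv-aux}; it is quoted from the companion paper \cite[Corollary 2.8]{EM'21_ArX_Hall-MHD_Gauss}, so there is no in-paper proof to match against. The proof there (as in the analogous statements of \cite{EM'13,EM'14}) rests on exactly the two ingredients you identify --- the antisymmetrized representation $\dual{\MHD (\phi ,\psi )}{\varphi }{} = -\mhd (\phi ,\varphi ,\psi )$, which puts all derivatives on $\varphi $ and reduces the pairing to integrals of $|a|\,|b|\,|\nabla \varphi |$, and the bilinear splitting $\MHD ({\phi }_{n},{\psi }_{n})-\MHD (\phi ,\psi ) = \MHD ({\phi }_{n}-\phi ,{\psi }_{n})+\MHD (\phi ,{\psi }_{n}-\psi )$ --- but is organized slightly differently: one first proves the convergence for test functions $\varphi \in \vcal \times \vcal $ with compact support (where only the local convergence and the global bounds enter), and then passes to general $\varphi \in {\Vmath }_{m}$ by density, using the uniform bound \eqref{eq:MHD-map_est-H-H} together with the ${L}^{2}(0,T;\Hmath )$-boundedness of the sequences to control the error of approximating $\varphi $. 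Your version replaces that density step by the direct $\eps $-splitting of the space integral into ${\mathcal{O} }_{R}$ and its complement, exploiting that $\nabla \varphi \to 0$ at infinity for $\varphi \in {\Vmath }_{m}$ with $m>\frac{5}{2}$ (via ${H}^{m-1}\hookrightarrow {C}_{0}$); this is logically equivalent and arguably more self-contained, since the decay of $\nabla \varphi $ is precisely what makes the density argument work anyway. The one point worth stating explicitly in a write-up is the elementary bound $|\mhd (a,\varphi ,b)| \le C \int_{{\mathbb{R} }^{3}} |a|\,|b|\,|\nabla \varphi |\, dx$ term by term, and that the identity $\dual{\MHD (\phi ,\psi )}{\varphi }{}=-\mhd (\phi ,\varphi ,\psi )$ for $\phi ,\psi \in \Hmath $ holds by the very definition of the extension in Lemma \ref{lem:MHD-term_properties}(iii); you have flagged both correctly.
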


\medskip
\subsection{The form $\thall $ and the map $\tHall $}

\medskip  \noindent
Now we analyze the Hall term. By the integration by parts formula for the  $\curl $-operator, we obtain
\[
\int_{{\mathbb{R} }^{3}} \curl [u \times \curl w ] \cdot v \, dx 
\; = \; \int_{{\mathbb{R} }^{3}} [u \times \curl w ] \cdot \curl v \, dx
\]
for $u,w,v \in \vcal $.
We will use the following tri-linear form defined by 
\begin{equation}
\hall (u,w,v ) \; := \; - \int_{{\mathbb{R} }^{3}} [u \times \curl w ] \cdot \curl v \, dx 
\label{eq:hall-form}
\end{equation}
for $u,w,v \in \vcal $, see \cite[Section 2.3]{EM'21_ArX_Hall-MHD_Gauss}.

\medskip \noindent
Since $(a\times b) \cdot c = - (a\times c) \cdot b $  for $a,b,c \in {\mathbb{R} }^{3}$, we infer that  
\begin{equation}
\hall (u,v,w) \; = \; - \hall (u,w,v). 
\label{eq:hall-form_antisym}
\end{equation} 
In particular,
\begin{equation}
\hall (u,v,v) \; = \; 0 .
\label{eq:hall-form_perp}
\end{equation}
Using the form $\hall $  defined by \eqref{eq:hall-form} we define the tri-linear  form  $\thall $ on $\Vmath \times \Vmath \times \Vtest   $  by
\[
\thall ({\phi }^{(1)},{\phi }^{(2)},{\phi }^{(3)}) 
\; := \; \hall ({\Bbold }^{(1)},{\Bbold }^{(2)},{\Bbold }^{(3)}),
\]
where ${\phi }^{(i)} = ({\ubold }^{(i)},{\Bbold }^{(i)}) \in \Vmath $ for $i=1,2$, and ${\phi }^{(3)} = ({\ubold }^{(3)},{\Bbold }^{(3)}) \in \Vtest   $. Due to \eqref{eq:Vmath_m1,m2}, ${\Vmath }_{1,2} := \Hsol{1} \times \Hsol{2}  $.  
The form $\thall $ is continuous on $\Vmath \times \Vmath \times \Vtest   $.
Moreover, by \eqref{eq:hall-form_antisym} and \eqref{eq:hall-form_perp}
the form $\thall $ has the following properties
\begin{align*} 
&  \thall ({\phi }^{(1)},{\phi }^{(2)},{\phi }^{(3)})
\; = \; -  \thall  ({\phi }^{(1)},{\phi }^{(3)},{\phi }^{(2)}) , \qquad 
{\phi }^{(1)} \in \Vmath , \quad {\phi }^{(i)} \in \Vtest  , \quad i=2,3 ,  
\end{align*}
and in particular
\begin{align*}  
& \thall ({\phi }^{(1)},{\phi }^{(2)},{\phi }^{(2)}) \; = \; 0 , 
\qquad {\phi }^{(1)} \in \Vmath , \quad {\phi }^{(2)} \in \Vtest .
\end{align*}        
Now, let us define a bilinear map $\tHall $ by 
\begin{equation} 
\tHall (\phi ,\psi ) \; := \;  \thall (\phi ,\psi ,\cdot )  ,  \qquad \qquad \phi ,\psi  \in \Vmath . 
\label{eq:tHall_map} 
\end{equation}
We will also use the notation $ \tHall (\phi ) :=  \tHall  (\phi ,\phi ) $.

\medskip  \noindent
In the following lemma we state basic properties of the map  $\tHall $ proved in \cite{EM'21_ArX_Hall-MHD_Gauss}.

\medskip
\begin{lemma} \label{lem:tHall-term_properties} (See \cite[Lemma 2.9]{EM'21_ArX_Hall-MHD_Gauss}.)
\begin{description}
\item[(i) ] There exists a constant ${c}_{\tHall } >0 $ such that 
\begin{equation}
\nnorm{ \tHall (\phi , \psi )}{{\Vmath }_{1,2}'}{} 
\;  \le \;  {c}_{\tHall }\norm{\phi }{\Vmath }{} \norm{\psi }{\Vmath }{}  , \qquad \phi ,\psi  \in \Vmath . 
\end{equation}
In particular, the map $\tHall : \Vmath \times \Vmath \to {\Vmath }_{1,2}' $ is well-defined bilinear and continuous.
Moreover, 
\[
\dual{\tHall (\phi ,\psi )}{\Theta  }{} \; = \; - \dual{\tHall (\phi ,\theta  )}{\psi  }, 
\qquad \phi \in \Vmath , \quad \psi , \theta \in {\Vmath }_{1,2} ,
\]
and, in particular,
\begin{equation}
\dual{\tHall (\phi )}{\phi }{} \; = \; 0 ,  \qquad \phi  \in  {\Vmath }_{1,2}  . 
\label{eq:tHall-map_perp}
\end{equation}
\item[(ii) ] The map $\tHall $ is locally Lipschitz continuous on the space $\Vmath $, i.e.
for every $r>0 $ there exists a constant ${L}_{r}>0$ such that 
\[
\nnorm{ \tHall (\phi )- \tHall  (\tilde{\phi } )}{ {\Vmath }_{1,2}' }{} 
\; \le \; {L}_{r}  \norm{\phi -\tilde{\phi } ) }{\Vmath }{} , \qquad \phi ,\tilde{\phi } \in \Vmath ,
  \quad \norm{\phi }{\Vmath }{} , \norm{\tilde{\phi }}{\Vmath }{} \le r .
\]   
\item[(iii) ] If $s\ge 0$ and $m> \frac{5}{2}$, then $\tHall $ can be extended to the bilinear mapping from $\Hmath \times \Vmath $ to $ {\Vmath }_{s,m}' $ (denoted still by $\tHall $) such that
\[
\nnorm{ \tHall (\phi , \psi )}{{\Vmath }_{s,m}'}{} 
\; \le \; {c}_{\tHall }(s,m) \nnorm{\phi }{\Hmath }{} \nnorm{\psi }{\Vmath }{} , \qquad \phi   \in \Hmath  , \quad \psi \in \Vmath ,
\]
where ${c}_{\tHall }(s,m)$ is a positive constant.

\medskip \noindent
In particular, if $m> \frac{5}{2} $, then  $\tHall $ can be extended to the bilinear mapping from $\Hmath \times \Vmath $ to ${\Vmath }_{m}'$ (denoted still by $\tHall $) such that
\begin{equation}
\nnorm{ \tHall (\phi , \psi )}{{\Vmath }_{m}'}{} 
\; \le \; {c}_{\tHall }(m) \nnorm{\phi }{\Hmath }{} \norm{\psi }{\Vmath }{} , \qquad \phi   \in \Hmath  ,
\quad  \psi \in \Vmath ,
\label{eq:tHall-map_est-H-V}
\end{equation}
where ${c}_{\tHall }(m):={c}_{\tHall }(m,m)$ and ${\Vmath }_{m}$ is the space defined by \eqref{eq:Vmath_m}.
\end{description}
\end{lemma}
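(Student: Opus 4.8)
The plan is to reduce all three assertions to estimates of the trilinear form $\hall $ from \eqref{eq:hall-form}, since by construction $\dual{\tHall (\phi ,\psi )}{\theta }{} = \thall (\phi ,\psi ,\theta )$ equals $\hall $ evaluated at the magnetic components of $\phi ,\psi ,\theta $, and hence involves only the $\Bbold $-slots. First I would work with smooth fields $u,w,v \in \vcal $, where $\hall (u,w,v) = -\int_{{\mathbb{R} }^{3}} [u \times \curl w ]\cdot \curl v \, dx$ is a genuine integral, prove the continuity estimates there, and then extend to the relevant Sobolev completions ($V$, $\Hsol{2}$, $H$) by density of $\vcal $ together with those estimates. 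The integration-by-parts identity preceding \eqref{eq:hall-form} is used only to recognize $\hall $ as the weak form of $\curl [u \times \curl w]$; the bounds themselves do not need it.

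For part (i) I would estimate $|\hall (u,w,v)|$ by Hölder's inequality, distributing the three factors into $L^6 \times L^2 \times L^3$: control $\norm{u}{L^6}{}$ by the Sobolev embedding $H^1 \hookrightarrow L^6$, $\norm{\curl w}{L^2}{}$ by $\norm{w}{H^1}{}$, and $\norm{\curl v}{L^3}{}$ by $\norm{v}{H^2}{}$ (since $\curl v \in H^1 \hookrightarrow L^3$). As $\tfrac16 + \tfrac12 + \tfrac13 = 1$, this gives $|\hall (u,w,v)| \le C \norm{u}{H^1}{}\,\norm{w}{H^1}{}\,\norm{v}{H^2}{}$, and reading off the magnetic components yields $\nnorm{\tHall (\phi ,\psi )}{{\Vmath }_{1,2}'}{} \le c_{\tHall }\norm{\phi }{\Vmath }{}\norm{\psi }{\Vmath }{}$; bilinearity is immediate and continuity follows from the bound. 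The algebraic identities are transported from $\hall $: the antisymmetry \eqref{eq:hall-form_antisym} immediately gives $\dual{\tHall (\phi ,\psi )}{\theta }{} = -\dual{\tHall (\phi ,\theta )}{\psi }{}$ whenever the pairings make sense (i.e. $\psi ,\theta \in {\Vmath }_{1,2}$, so that their magnetic components lie in $\Hsol{2}$), and \eqref{eq:hall-form_perp} gives the vanishing \eqref{eq:tHall-map_perp}.

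Part (ii) then follows formally from bilinearity: writing $\tHall (\phi ) - \tHall (\tilde{\phi }) = \tHall (\phi - \tilde{\phi },\phi ) + \tHall (\tilde{\phi },\phi - \tilde{\phi })$ and applying the estimate of (i) gives the local Lipschitz bound on the ball of radius $r$ with constant $L_r = 2r\, c_{\tHall }$. For part (iii) the point is to shift the lost regularity onto the more regular test field: when $m > \tfrac52$ one has $m-1 > \tfrac32$, so $\curl v \in H^{m-1} \hookrightarrow L^\infty $, and estimating $|\hall (u,w,v)| \le \norm{u}{L^2}{}\,\norm{\curl w}{L^2}{}\,\norm{\curl v}{L^\infty }{}$ now requires only $u \in L^2$ and $w \in H^1$. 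This produces $\nnorm{\tHall (\phi ,\psi )}{{\Vmath }_{s,m}'}{} \le c_{\tHall }(s,m)\,\nnorm{\phi }{\Hmath }{}\,\norm{\psi }{\Vmath }{}$ for every $s \ge 0$ (the velocity slot of the test field never enters), and taking $s=m$ specializes to \eqref{eq:tHall-map_est-H-V}.

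The main obstacle I anticipate is bookkeeping rather than conceptual depth: one must choose the Hölder exponents so that the three factors land in scales dual to the prescribed spaces ($H^1 \times H^1 \times H^2$ in (i), $L^2 \times H^1 \times H^m$ in (iii)), and one must justify that the integral expression for $\hall $, initially valid only for $\vcal $-fields, persists on the Sobolev completions. The latter is exactly where the continuity estimates are invoked: each estimate shows $\hall $ is a bounded trilinear form on the appropriate product of completions, so it extends uniquely and continuously from the dense subspace $\vcal $, and the extended form still satisfies the antisymmetry and orthogonality relations by continuity.
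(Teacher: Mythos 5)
Your proof is correct: the H\"older--Sobolev bookkeeping in (i) ($L^6\times L^2\times L^3$ with $H^1\hookrightarrow L^6$ and $H^1\hookrightarrow L^3$ in dimension $3$) and in (iii) ($L^2\times L^2\times L^\infty$ with $H^{m-1}\hookrightarrow L^\infty$ for $m-1>\tfrac32$) is exactly right, the bilinear splitting in (ii) is standard, and the density-of-$\vcal$ extension together with transport of the antisymmetry identities by continuity is sound. The paper itself gives no proof of this lemma --- it only cites \cite[Lemma 2.9]{EM'21_ArX_Hall-MHD_Gauss} --- and your argument is the natural one that the cited result rests on, so there is nothing further to compare.
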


\medskip
\noindent
We will use also the following convergence result for the map $\tHall $ proved in Lemma 2.5 and Corollary 2.10 in \cite{EM'21_ArX_Hall-MHD_Gauss}.

\medskip
\begin{cor} \label{cor:tHall-term_conv_general} (See \cite[Corollary 2.10]{EM'21_ArX_Hall-MHD_Gauss}.)
Let $\phi \in {L}^{2}(0,T;\Hmath )$ and $\psi  \in {L}^{2}(0,T;\Vmath )$  and let
$({\phi }_{n} ) \subset {L}^{2}(0,T;\Hmath )$
and $({\psi }_{n} ) \subset {L}^{2}(0,T;\Vmath )$ be two sequences  such that
\begin{itemize}
\item $({\phi }_{n})$ is bounded in ${L}^{2}(0,T;\Hmath )$ and ${\psi }_{n} \to \psi $ weakly in ${L}^{2}(0,T;\Vmath )$,
\item  ${\phi }_{n} \to \phi $ and ${\psi }_{n} \to \psi  $  in ${L}^{2}(0,T; {\Hmath }_{loc}) $.
\end{itemize}
If $s\ge 0 $ and $m > \frac{5}{2}$, then for all $t \in [0,T] $ and all
 $ \varphi  \in {\Vmath }_{s,m}  $:
\begin{equation*}
\lim_{n\to \infty } \int_{0}^{t}\dual{\tHall ({\phi }_{n} (s),{\psi }_{n} (s))}{ \varphi  }{} \, ds  
\; = \; \int_{0}^{t}\dual{\tHall (\phi (s),\psi (s))}{\varphi  }{} \, ds ,
\end{equation*}
where ${\Vmath }_{s,m}  $ is the the space defined by \eqref{eq:Vmath_m1,m2}-\eqref{eq:Vmath_m1,m2-norm}.
\end{cor}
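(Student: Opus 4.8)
The plan is to reduce the pairing to the $\Bbold$-components and then transfer the spatial derivative falling on $\psi_n$ onto the fixed smooth test field $\varphi$, where it becomes harmless. Write the second ($\Bbold$-)components of $\phi,\psi,\varphi,\phi_n,\psi_n$ as $\Bbold_\phi,\Bbold_\psi,\Bbold_\varphi,\Bbold_{\phi_n},\Bbold_{\psi_n}$. Fix $s\ge 0$ and $m>\frac{5}{2}$. For each time $s$ the pairing $\dual{\tHall(\phi_n(s),\psi_n(s))}{\varphi}{}$ is well defined because $\tHall(\phi_n,\psi_n)\in{\Vmath}_{s,m}'$ by Lemma~\ref{lem:tHall-term_properties}(iii) and $\varphi\in{\Vmath}_{s,m}$, and by the definition of $\thall$ and $\tHall$ together with \eqref{eq:hall-form},
\[
\dual{\tHall(\phi_n(s),\psi_n(s))}{\varphi}{} \; = \; \hall(\Bbold_{\phi_n}(s),\Bbold_{\psi_n}(s),\Bbold_\varphi) \; = \; -\int_{{\mathbb{R}}^{3}} [\Bbold_{\phi_n}(s) \times \curl \Bbold_{\psi_n}(s)] \cdot \curl \Bbold_\varphi \, dx .
\]
Since $\Bbold_\varphi \in V_m$ with $m>\frac{5}{2}$, the field $\curl \Bbold_\varphi$ lies in $H^{m-1}$ with $m-1>\frac{3}{2}$, hence is bounded and vanishes at infinity. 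Using the pointwise scalar-triple-product identity $(a\times b)\cdot c = -(a\times c)\cdot b$ (legitimate here since $\Bbold_{\phi_n}(s),\curl\Bbold_{\psi_n}(s)\in L^2$ and $\curl\Bbold_\varphi\in L^\infty$, so both integrals converge absolutely; note this does \emph{not} require the stronger antisymmetry \eqref{eq:hall-form_antisym} at the level of ${\Vmath}_{1,2}$), I would rewrite the above as $\int_{{\mathbb{R}}^{3}} [\Bbold_{\phi_n}(s) \times \curl \Bbold_\varphi] \cdot \curl \Bbold_{\psi_n}(s) \, dx$. The derivative now sits only on $\psi_n$ (through $\curl\Bbold_{\psi_n}$) and on the fixed test field, never on $\phi_n$, which matches the available data since $\phi_n$ carries no derivative. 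Setting $g_n(s,x):=\Bbold_{\phi_n}(s,x)\times\curl\Bbold_\varphi(x)$ and $g(s,x):=\Bbold_{\phi}(s,x)\times\curl\Bbold_\varphi(x)$, the left-hand side of the claim becomes $\int_0^t\!\int_{{\mathbb{R}}^3} g_n\cdot \curl\Bbold_{\psi_n}\,dx\,ds$.

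The key analytic step is to prove that $g_n\to g$ \emph{strongly} in $L^2(0,T;L^2({\mathbb{R}}^3,{\mathbb{R}}^3))$. Since $g_n-g=(\Bbold_{\phi_n}-\Bbold_\phi)\times\curl\Bbold_\varphi$, I would split the spatial integral over ${\mathcal{O}}_R$ and over ${\mathbb{R}}^3\setminus{\mathcal{O}}_R$. On ${\mathcal{O}}_R$ the factor $|\curl\Bbold_\varphi|$ is bounded by $\|\curl\Bbold_\varphi\|_{L^\infty}$, so this part is controlled by $\|\curl\Bbold_\varphi\|_{L^\infty}^2\,{p}_{T,R}(\phi_n-\phi)^2$, which tends to $0$ for each fixed $R$ by $\phi_n\to\phi$ in $L^2(0,T;{\Hmath}_{loc})$. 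On the complement the factor $|\curl\Bbold_\varphi|$ is at most $\eps_R:=\sup_{x\notin{\mathcal{O}}_R}|\curl\Bbold_\varphi(x)|$, and $\eps_R\to 0$ as $R\to\infty$ because $\curl\Bbold_\varphi$ vanishes at infinity; this part is then bounded by $\eps_R^2\,\sup_n\|\Bbold_{\phi_n}-\Bbold_\phi\|_{L^2(0,T;L^2)}^2$, which is finite because $(\phi_n)$ is bounded in $L^2(0,T;\Hmath)$. A standard argument (first fix $R$ large to make the tail small, then let $n\to\infty$) then gives $g_n\to g$ strongly.

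Finally, since the map $\psi\mapsto\curl\Bbold_\psi$ is bounded and linear from $L^2(0,T;\Vmath)$ into $L^2(0,T;L^2)$, the weak convergence $\psi_n\to\psi$ yields $\curl\Bbold_{\psi_n}\to\curl\Bbold_\psi$ weakly in $L^2(0,T;L^2)$; in particular this sequence is bounded. I would then decompose
\[
\int_0^t\!\!\int g_n\cdot\curl\Bbold_{\psi_n} - \int_0^t\!\!\int g\cdot\curl\Bbold_\psi \; = \; \int_0^t\!\!\int (g_n-g)\cdot\curl\Bbold_{\psi_n} + \int_0^t\!\!\int g\cdot(\curl\Bbold_{\psi_n}-\curl\Bbold_\psi) .
\]
The first term is bounded by $\|g_n-g\|_{L^2}\,\|\curl\Bbold_{\psi_n}\|_{L^2}$ and tends to $0$ (strong times bounded); the second equals the pairing of the fixed field $\ind{[0,t]}\,g\in L^2(0,T;L^2)$ against $\curl\Bbold_{\psi_n}-\curl\Bbold_\psi$ and tends to $0$ by weak convergence. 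Here $s\ge 0$ enters only through the requirement that the pairing with $\varphi\in{\Vmath}_{s,m}$ be well defined, as the $\ubold$-component of $\varphi$ never appears. I expect the main obstacle to be the second paragraph: upgrading the merely \emph{local} strong convergence of $\phi_n$ to \emph{global} strong convergence of the product $g_n$, which is precisely where the decay of $\curl\Bbold_\varphi$ at infinity — and hence the regularity $m>\frac{5}{2}$ — is indispensable.
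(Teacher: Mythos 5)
Your argument is correct. Note that the paper itself offers no proof of this corollary --- it is quoted from \cite[Lemma 2.5 and Corollary 2.10]{EM'21_ArX_Hall-MHD_Gauss} --- so your write-up serves as a self-contained substitute, and it follows the route one would expect that reference to take. The three ingredients all check out: (i) the pointwise transposition $(a\times b)\cdot c=-(a\times c)\cdot b$ is legitimately applied under the integral because $\Bbold_{\phi_n}(s),\curl\Bbold_{\psi_n}(s)\in L^2$ while $\curl\Bbold_\varphi\in H^{m-1}\hookrightarrow L^\infty$ for $m-1>\tfrac32$, and the extended pairing of Lemma \ref{lem:tHall-term_properties}(iii) is indeed given by the integral formula \eqref{eq:hall-form}, since that integral is continuous in the first argument with respect to the $\Hmath$-norm and $\Vmath$ is dense in $\Hmath$; (ii) the splitting of $\|(\Bbold_{\phi_n}-\Bbold_\phi)\times\curl\Bbold_\varphi\|_{L^2}$ into a part over ${\mathcal{O}}_R$ (killed by convergence in ${L}^{2}(0,T;{\Hmath}_{loc})$) and a tail (killed by the decay of $\curl\Bbold_\varphi$ at infinity together with the uniform $L^2(0,T;\Hmath)$ bound) is exactly the localization that the hypothesis $m>\tfrac52$ is there to enable; (iii) the final strong-times-weak pairing against $\curl\Bbold_{\psi_n}\rightharpoonup\curl\Bbold_\psi$ is standard. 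Your decomposition is, after transposing the curl, the usual bilinear splitting $\tHall(\phi_n,\psi_n)-\tHall(\phi,\psi)=\tHall(\phi_n-\phi,\psi_n)+\tHall(\phi,\psi_n-\psi)$, so the proof is essentially the canonical one; what your phrasing buys is that the asymmetry of the hypotheses (no derivative and only boundedness plus local convergence for $\phi_n$, weak $\Vmath$-convergence for $\psi_n$) is made completely transparent, and that the role of $s\ge0$ and of the $\ubold$-component of $\varphi$ (none) is explicit.
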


\medskip
\subsection{Probabilistic preliminaries. Time homogeneous Poisson random measure}  \label{sec:Poisson_random_measure}

\medskip
\noindent
We follow the approach due to \cite{Applebaum'2009}, \cite{Brze+Hausenblas'2009}, 
\cite{Brze+Haus+Raza'18}, see also \cite{Ikeda+Watanabe'81}  and \cite{Peszat+Zabczyk'2007}. 
Let us denote
 $\mathbb{N} :=\{ 0,1,2,... \} , \, \,  \overline{\mathbb{N} }:= \mathbb{N} \cup \{ \infty  \} , \, \, 
 {\mathbb{R} }_{+}:=[0,\infty ) $.
Let  $(S, \mathcal{S} )$ be a measurable space and let 
 ${M}_{\overline{\mathbb{N} }}(S) $ be the set of all $\overline{\mathbb{N} }$ valued measures on $(S, \mathcal{S} )$.
On the set  ${M}_{\overline{\mathbb{N} }}(S)$ we consider the $\sigma $-field
 $  {\mathcal{M} }_{\overline{\mathbb{N} }}(S) $ defined as
the smallest $\sigma $-field  such that for all $ B \in \mathcal{S} $: the map
$
{i}_{B} : {M}_{\overline{\mathbb{N} }}(S) \ni \mu \mapsto \mu (B) \in \overline{\mathbb{N} }
$ 
is measurable.

\medskip  \noindent
Let $(\Omega , \mathcal{F} ,\p  )$ be a complete probability space with filtration $\mathbb{F}:=({\mathcal{F} }_{t}{)}_{t\ge 0}$ satisfying the usual conditions.

\medskip 
\begin{definition} \rm (See \cite{Applebaum'2009} and \cite{Brze+Hausenblas'2009}). Let $(Y, \ycal )$ be a measurable space. A
\it  time homogeneous Poisson random measure $\eta $  \rm on $(Y, \ycal )$ over $(\! \Omega ,\mathcal{F} , \Fmath ,\p )$ is a measurable function 
\[
\eta : (\Omega , \mathcal{F} ) \to \bigl( {M}_{\overline{\mathbb{N} }} ({\mathbb{R} }_{+}\times Y),{\mathcal{M} }_{\overline{\mathbb{N} }} ({\mathbb{R} }_{+}\times Y) \bigr) 
\]
such that 
\begin{itemize}
\item[(i) ] for all $ B \in \mathcal{B} ({\mathbb{R} }_{+}) \otimes \ycal $, $\eta (B):= {i}_{B} \circ \eta : \Omega \to \overline{\mathbb{N} }$ is a Poisson random variable with parameter $\e [\eta (B)]$;
\item[(ii) ] $\eta $ is independently scattered, i.e. if the sets ${B}_{j}\in \mathcal{B} ({\mathbb{R} }_{+}) \otimes \ycal $, $j=1,...,n$, are disjoint then the random variables $\eta ({B}_{j})$, $j=1,...,n$, are independent;
\item[(iii) ]  for all $ U \in \ycal $  the $\overline{\mathbb{N} }$-valued process 
$\bigl( N(t,U) {\bigr) }_{t \ge 0} $ defined by 
\[
N(t,U) \; := \; \eta ((0,t]\times U) , \qquad  t \ge 0 
\]
is $\Fmath $-adapted and its increments are independent of the past, i.e. if $t>s\ge 0$, then 
$N(t,U)-N(s,U)= \eta ((s,t]\times U)$ is independent of ${\mathcal{F} }_{s}$.
\end{itemize} 
\end{definition}
\noindent
If $\eta $ is a time homogeneous Poisson random measure then the formula
\[
\mu (A) \; := \; \e [\eta ((0,1]\times A )] , \qquad A \in \ycal 
\]
defines a measure on $(Y, \ycal )$ called an \it intensity measure \rm of $\eta $.
Moreover, for all $T<\infty $ and all $A\in \ycal $ such that $\e \bigl[ \eta ((0,T]\times A) \bigr] <\infty $, the 
$\mathbb{R} $-valued process $\{ \tilde{N} (t,A) {\} \! }_{t \in (0,T]\! }$ defined by 
\[
\tilde{N} (t,A) \; := \; \eta ((0,t]\times A)  - t \mu (A) , \qquad t \in (0,T],
\] 
is an integrable martingale on $(\Omega ,\mathcal{F} , \Fmath ,\p )$.
The random measure $\ell \otimes \mu $ on $\mathcal{B} ({\mathbb{R} }_{+}) $ $\otimes $ $\ycal $, where $\mathcal{B} ({\mathbb{R} }_{+}) $ denotes the Borel $\sigma $-field and $\ell $ stands for the Lebesgue measure, is called a \it compensator \rm of $\eta $, and 
the difference between a time homogeneous Poisson random measure $\eta $ and its compensator, i.e. 
\[
\tilde{\eta } \; := \; \eta - \ell \otimes \mu   ,
\]
is called a \it compensated time homogeneous Poisson random measure.\rm 

\medskip  \noindent
Let us also recall basic properties of the stochastic integral with respect to  $\tilde{\eta }$,
see \cite{Brze+Hausenblas'2009}, \cite{Ikeda+Watanabe'81} and \cite{Peszat+Zabczyk'2007} for details.
Let $E $ be a separable Hilbert space and let $\pcal $ be a predictable $\sigma $-field on $[0,T] \times \Omega $.
Let ${\mathfrak{L}}^{2}_{\mu ,T} (\pcal \otimes \ycal ,l \otimes \p \otimes \mu ;E)$ be a space of all $E$-valued, $\pcal \otimes \ycal $-measurable processes such that
\[
\e \Bigl[ \int_{0}^{T}\int_{Y} \norm{\xi (s, \cdot ,y)}{E}{2} \,  ds d\mu (y) \Bigr] \; < \; \infty .
\] 
If $\xi \in {\mathfrak{L}}^{2}_{\mu ,T} (\pcal \otimes \ycal ,l \otimes \p \otimes \mu ;E )$
then the integral process $\int_{0}^{t} \int_{Y} \xi (s, \cdot ,y) \, \tilde{\eta }(ds,dy)$, 
$t\in [0,T]$, is a \it c\`{a}dl\`{a}g \rm ${L}^{2}$-integrable martingale. Moreover, the following isometry formula holds
\begin{equation} \label{eq:isometry}
\e \biggl[ \Norm{\int_{0}^{t} \int_{Y} \xi (s, \cdot ,y)  \tilde{\eta }(ds,dy) }{E}{2} \biggr]
\; = \; \e \Bigl[ \int_{0}^{t}\int_{Y} \norm{\xi (s, \cdot ,y)}{E}{2}   ds d\mu (y) \Bigr] ,
   \, \,  t \in [0,T].
\end{equation}

\medskip
\section{Martingale solutions of the Hall-MHD equations.} \label{sec:statement}

\medskip  \noindent
Let  $H$ and ${L}^{2}(0,T;{H}_{loc})$ be the spaces defined by \eqref{eq:H} and \eqref{eq:seminorms-L^2(0,T;H_loc)}, respectively.
Let $(\Omega , \mathcal{F} ,\p  )$ be a complete probability space with filtration 
$\mathbb{F}:=\{ {\mathcal{F} }_{t}{\} }_{t\ge 0}$ satisfying the usual conditions and let us denote
$\mathfrak{A}:=(\Omega , \mathcal{F} , \Fmath ,\p  )$.

\medskip  \noindent
\begin{assumption}  \label{ass:data+noise_Poisson} \
\begin{description}
\item[(F.1)] Let $(Y_i, {\ycal }_{i})$, $i=1,2$, be measurable spaces and let  ${\mu }_{i}$, $i=1,2 $, be  $\sigma $-finite measures on $(Y_i, {\ycal }_{i})$. 
Assume that ${\eta }_{i}$, $i=1,2$, are  time homogeneous Poisson random measures on  $({Y}_{i}, {\ycal }_{i})$  over
$\mathfrak{A}$ with the (jump) intensity measures ${\mu }_{i}$.
\item[(F.2)] Assume that $F_i:[0,T]\times H \times {Y}_{i} \to H $, where $i=1,2$, are two measurable maps such that 
 there exist  constants $L_i$, $i=1,2$ such that
\begin{equation}
\int_{{Y}_{i}} \nnorm{F_i(t,\phi ;y)- F_i(t,\psi ;y) }{H}{2}  \, {\mu }_{i}(dy) 
\; \le \;  L_i \nnorm{\phi -\psi }{H}{2} 
   , \quad \phi , \psi \in H  , \, \,  t \in [0,T] \label{eq:Fi_Lipschitz_cond} ,
\end{equation}
and for each $q\ge 1 $  there exists a constant ${K}_{iq}$ such that
\begin{equation}   
\int_{{Y}_{i}} \nnorm{F_i(t,\phi ;y)}{H}{q} \, {\mu }_{i} (dy) 
\; \le \;  {K}_{iq} (1 + \nnorm{\phi }{H}{q}), \qquad  \phi  \in H  , \quad t \in [0,T].  \label{eq:Fi_linear_growth}
\end{equation}
\item[(F.3)] Moreover, for all $\varphi  \in H $ the maps ${ \tilde{F} }_{i,\varphi }$, $i=1,2 $, defined
for $\phi : [0,T] \to \Hmath $ by 
\begin{equation} \label{eq:Fi**}
\bigl( {\tilde{F}}_{i,\varphi }(\phi )\bigr) (t,y)
\; := \; \ilsk{F_i(t,\phi ({t}^{-});y)}{\varphi }{H}, 
    \quad (t,y) \in [0,T] \times {Y}_{i} 
\end{equation}
are continuous from ${L}^{2}(0,T;{H}_{loc}) $ into $ {L}^{2}([0,T]\times {Y}_{i}, d\ell \otimes {\mu }_{i}; \mathbb{R} ) $.

\medskip  \noindent
Here $\ell $ denotes the Lebesgue measure on the interval $[0,T]$. 
\end{description}
\end{assumption}

\medskip  \noindent
In the context of Assumption \ref{ass:data+noise_Poisson} let 
\begin{equation*}
Y:= {Y}_{1}\times {Y}_{2}, \qquad \ycal := {\ycal }_{1} \otimes {\ycal }_{2}, \qquad \mu := {\mu }_{1} \otimes {\mu }_{2}, \qquad \eta := ({\eta }_{1},{\eta }_{2}).
\end{equation*}

\medskip  
\begin{remark} \label{rem:F_properties}
\bf (The map $F$ and its properties.)  \rm 
Let ${F}_{1}$ and ${F}_{2}$ be the maps given in Assumption \ref{ass:data+noise_Poisson}. 
Let us define the following map
\begin{equation}
F(t,\Phi ;y) \; := \;  (F_1(t,\ubold ;y_1),F_2(t,\Bbold ;y_2)) , 
\label{eq:F_map}
\end{equation}
where $t\in [0,T]$, $\Phi = (\ubold ,\Bbold ) \in \Hmath $, $y = (y_1,y_2)\in Y $.
\begin{description} 
\item[(i) ] Then
$
F:[0,T]\times \Hmath \times Y \to \Hmath
$
is a measurable map and there exists a constant $L>0$ such that
\begin{equation}
\int_{Y} \nnorm{F(t,\Phi ;y)- F(t,\Psi ;y) }{\Hmath }{2}  \mu (dy) 
\; \le \;  L \nnorm{\Phi -\Psi }{\Hmath }{2} 
   , \quad \Phi ,  \Psi  \in \Hmath  , \, \,  t \in [0,T] \label{eq:F_Lipschitz_cond} ,
\end{equation}
and for each $q\ge 1 $  there exists a constant ${K}_{q}$ such that
\begin{equation}   
\int_{Y} \nnorm{F(t,\Phi ;y)}{\Hmath }{q} \, \mu (dy) 
\; \le \;  {K}_{q}(1 + \nnorm{\Phi }{\Hmath }{q}), \qquad  \Phi  \in \Hmath  , \quad t \in [0,T].  \label{eq:F_linear_growth}
\end{equation}
\item[(ii) ]
Moreover, for every $\Psi  \in \Hmath $ the map ${\tilde{F}}_{\Psi }$ defined
for $\Phi :[0,T] \to \Hmath $ by 
\begin{equation} \label{eq:F**}
\bigl( {\tilde{F}}_{\Psi }(\Phi )\bigr) (t,y)
\; := \; \ilsk{F(t,\Phi ({t}^{-});y)}{\Psi }{\Hmath }, 
      \quad (t,y) \in [0,T] \times Y 
\end{equation}
is  continuous from ${L}^{2}(0,T;{\Hmath }_{loc}) $ into $ {L}^{2}([0,T]\times Y, d\ell \otimes \mu ; \mathbb{R} ) $.
\end{description}
\end{remark}

\noindent
Let us recall that the spaces $\Hmath $ and ${L}^{2}(0,T;{\Hmath }_{loc})$ are defined in Section \ref{sec:Hall-MHD_funct-setting}.

\medskip  
\noindent
Taking into account the above notations, and the maps  $\mathcal{A} $, $\MHD $, $\tHall $ and $F$ defined respectively  by \eqref{eq:A_acal_rel},  \eqref{eq:MHD-map}, \eqref{eq:tHall_map} and \eqref{eq:F_map},  problem \eqref{eq:Hall-MHD_u_Poisson}-\eqref{eq:Hall-MHD_ini-cond} can be rewritten in the form of the following equation
\begin{equation}
\begin{split} 
&  \X (t)  + \int_{0}^{t} \bigl[  \mathcal{A} \X (s)  + \MHD  (\X (s)) +\tHall (\X (s)) \bigr] \, ds  \\
& \qquad   \; = \; {\X }_{0}
 + \int_{0}^{t}\int_{Y}  F(s,\X ({s}^{-});y ) \, \tilde{\eta }(ds,dy)  , \qquad t \in [0,T] ,   
\end{split}  \label{eq:Hall-MHD_functional}
\end{equation} 
where ${\X }_{0} := ({\ubold }_{0}, {\Bbold }_{0})$.

\medskip
\begin{definition}  \rm  \label{def:mart-sol}
Let Assumption \ref{ass:data+noise_Poisson} be satisfied and let ${\X }_{0} \in \Hmath $. 
We say that there exists a \bf martingale solution \rm of  problem \eqref{eq:Hall-MHD_functional} 
iff there exist
\begin{itemize}
\item[$\bullet $] a stochastic basis $\bar{\mathfrak{A}}:= \bigl( \bar{\Omega }, \bar{\mathcal{F} },  \bar{\Fmath } ,\bar{\p }  \bigr) $ with a  filtration $\bar{\Fmath } = \{ {\bar{\mathcal{F} }_{t}}{\} }_{t \in [0,T]} $ satisfying the usual conditions, 
\item[$\bullet $]  a time homogeneous Poisson random measure $\bar{\eta }$ on $(Y, \ycal )$ over
$\bar{\mathfrak{A}} $ with the intensity measure $\mu $,
\item[$\bullet $] and an $\bar{\Fmath }$- progressively measurable process $\bX : [0,T] \times \Omega \to \Hmath $ with  paths satisfying
\begin{equation}
\bX (\cdot , \omega ) \; \in \; \Dmath ([0,T]; {\Hmath }_{w}) \cap {L}^{2}(0,T;\Vmath ) ,
\label{eq:mart-sol_regularity}
\end{equation}
for $\bar{\p } $-a.e. $\omega \in \bar{\Omega }$,
and such that 
for all $ t \in [0,T] $ and  $\phi \in \Vtest  $ the following identity holds $\bar{\p }$ - a.s.
\begin{equation} 
\begin{split}
&\ilsk{\bX (t)}{\phi}{\Hmath }  + \int_{0}^{t} \dual{\mathcal{A} \bX (s)}{\phi}{}  ds
+ \int_{0}^{t} \dual{\MHD (\bX (s))}{\phi }{}  ds 
+ \int_{0}^{t} \dual{\tHall (\bX (s))}{\phi }{}  ds
    \\
\; & = \; \ilsk{{\X }_{0}}{\phi}{\Hmath } 
 + \int_{0}^{t} \int_{Y}  \ilsk{F(s,\bX ({s}^{-});y ) }{\phi }{\Hmath } \, \tilde{\bar{\eta }}(ds,dy),
\end{split}  \label{eq:mart-sol_int-identity}
\end{equation}
where  ${\Vmath }_{1,2}$ is the space is defined by \eqref{eq:Vmath_m1,m2}.
\end{itemize}
If all the above conditions are satisfied, then the system
$ \bigl( \bar{\mathfrak{A}}, \bar{\eta } ,\bX \bigr)  $
is called a \bf martingale solution \rm of problem \eqref{eq:Hall-MHD_functional}.
\end{definition}

\medskip  \noindent
Here, ${\Hmath }_{w}$ denotes the Hilbert space $\Hmath $ endowed with the weak topology.
The fact that $\Psi  \in  \Dmath ([0,T]; {\Hmath }_{w}) $ means that for every $h\in \Hmath $ 
\[
[0,T] \; \ni \; t \; \mapsto \; \ilsk{\Psi (t)}{h}{\Hmath } \in \; \mathbb{R} \;  
\] 
is a real-valued c\`{a}dl\`{a}g function, i.e. it is right continuous and has left limits at every $t\in [0,T]$, see also Appendix \ref{sec:Cadlag_functions}.   

\medskip  \noindent
The main result of this paper asserts the existence of a martingale solution.

\medskip
\begin{theorem} \label{th:mart-sol_existence}
Let Assumption  \ref{ass:data+noise_Poisson} be satisfied and let $p\in [2,\infty )$.
Then for every ${\X }_{0} \in \Hmath $ there exists a  martingale solution  of  problem \eqref{eq:Hall-MHD_functional} such that for every $q \in [2,p]$
\begin{equation} 
\bar{\e } \Bigl[ \sup_{t \in [0,T]} \nnorm{\bX (t)}{\Hmath }{q} 
 + \int_{0}^{T} \norm{\bX (t)}{\Vmath }{2} \, dt\Bigr] \; < \; \infty .
\label{eq:mart-sol_energy-ineq_exist}
\end{equation}   
\end{theorem}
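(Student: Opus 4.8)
The plan is to construct the martingale solution via a Fourier-truncated Galerkin-type approximation combined with stochastic compactness. First I would introduce the finite-dimensional spaces $\Hn$ obtained by truncating the Fourier transform to frequencies $|\xi| \le n$ (the projections $\Pn$ onto these spaces), and formulate the approximate equation in $\Hn$: since $\Hn$ is a Hilbert space on which $\mathcal{A}$, $\MHD$, $\tHall$ and the projected noise coefficient $\Pn F$ act, and all these maps are locally Lipschitz on $\Vmath$ by Lemma \ref{lem:MHD-term_properties}(ii) and Lemma \ref{lem:tHall-term_properties}(ii), standard existence theory for SDEs in Hilbert spaces driven by a compensated Poisson random measure (using the Lipschitz and linear-growth bounds \eqref{eq:F_Lipschitz_cond}--\eqref{eq:F_linear_growth}) yields a unique c\`adl\`ag solution $\Xn$ of the truncated problem on $[0,T]$.

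Next I would derive the uniform a priori estimates. Applying the It\^{o} formula for the Poisson-driven process to $\nnorm{\Xn(t)}{\Hmath}{2}$, the crucial point is that the nonlinear terms vanish in the energy balance: $\dual{\MHD(\Xn)}{\Xn}{}=0$ by \eqref{eq:MHD-map_perp} and $\dual{\tHall(\Xn)}{\Xn}{}=0$ by \eqref{eq:tHall-map_perp}, while $\dual{\mathcal{A}\Xn}{\Xn}{}=\norm{\Xn}{}{2}$ controls the dissipation. The jump terms are handled by the isometry \eqref{eq:isometry} and the linear-growth bound \eqref{eq:F_linear_growth}, and a Burkholder--Davis--Gundy argument together with the Gronwall lemma yields, uniformly in $n$,
\[
\sup_{n}\bar{\e}\Bigl[\sup_{t\in[0,T]}\nnorm{\Xn(t)}{\Hmath}{q}+\int_{0}^{T}\norm{\Xn(t)}{\Vmath}{2}\,dt\Bigr]<\infty
\]
for every $q\in[2,p]$, which anticipates \eqref{eq:mart-sol_energy-ineq_exist}. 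I would then establish tightness of the laws $\Law(\Xn)$ on the space $\zcal$ (the c\`adl\`ag analogue from Section \ref{sec:comp-tight}); the essential ingredient beyond the energy bound is an Aldous-type condition on the increments, estimating $\bar{\e}\,\nnorm{\Xn(\tau+\theta)-\Xn(\tau)}{\Vmath'}{}$ for stopping times $\tau$ using the bounds $\nnorm{\mathcal{A}\phi}{\Vmath'}{}\le\norm{\phi}{}{}$ from \eqref{eq:Acal_norm}, the bilinear estimates of Lemmas \ref{lem:MHD-term_properties}(i) and \ref{lem:tHall-term_properties}(i), and the isometry for the jump part.

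Having tightness, I would invoke the version of the Skorokhod theorem for non-metric and L\'evy-type settings (Corollary \ref{cor:Skorokhod_J,B,H}) to obtain a new stochastic basis $\bar{\mathfrak{A}}$, a Poisson random measure $\bar{\eta}$, and processes $\bXn,\bX$ on it with $\bXn\to\bX$ almost surely in $\zcal$, and $\Law(\bXn)=\Law(\Xn)$. The final and hardest step is the passage to the limit in the weak formulation \eqref{eq:mart-sol_int-identity}. The linear term $\dual{\mathcal{A}\bXn}{\phi}{}$ passes by the weak $\Ldn$ convergence inherited from the uniform $\Vmath$-bound; the nonlinear terms $\MHD$ and $\tHall$ are exactly where the almost-sure convergence in $\Ldn(0,T;{\Hmath}_{loc})$ is needed, and here I would apply Corollary \ref{cor:MHD-map_conv-aux} and Corollary \ref{cor:tHall-term_conv_general} for a fixed test function $\phi\in\Vtest$ with $m>\frac52$. \textbf{The main obstacle} is the stochastic (jump) integral: one must show that the limit process $\bX$ solves the equation driven by $\bar{\eta}$, which requires identifying the limit of the compensated Poisson integrals $\int_{0}^{t}\int_{Y}\ilsk{F(s,\bXn({s}^{-});y)}{\phi}{\Hmath}\,\tilde{\bar{\eta}}(ds,dy)$. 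This is delicate because the integrands depend on the c\`adl\`ag path evaluated at left limits, the Skorokhod convergence in $\zcal$ only gives convergence of $\bXn$ to $\bX$ at continuity points, and one must control the jumps uniformly; the continuity property \eqref{eq:F**} of the map $\tilde{F}_\Psi$ on ${L}^{2}(0,T;{\Hmath}_{loc})$ from Remark \ref{rem:F_properties}(ii) is precisely the tool that converts the path convergence into convergence of the stochastic integrals, after which a standard martingale-representation argument identifies $(\bar{\mathfrak{A}},\bar{\eta},\bX)$ as the desired martingale solution and \eqref{eq:mart-sol_energy-ineq_exist} follows from lower semicontinuity and Fatou's lemma applied to the uniform estimates.
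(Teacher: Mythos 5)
Your proposal follows essentially the same route as the paper: Fourier truncation in $\Hn$, the It\^{o} formula with the cancellations \eqref{eq:MHD-map_perp} and \eqref{eq:tHall-map_perp} plus a Gronwall argument for the uniform estimates, tightness on $\zcal$ via the Aldous condition, the Skorokhod--Jakubowski theorem of Corollary \ref{cor:Skorokhod_J,B,H}, and passage to the limit using Corollaries \ref{cor:MHD-map_conv-aux} and \ref{cor:tHall-term_conv_general} together with the continuity of ${\tilde{F}}_{\Psi }$ from Remark \ref{rem:F_properties}(ii) for the jump integral. The only cosmetic difference is at the final identification step: the paper does not need a martingale-representation argument, since Corollary \ref{cor:Skorokhod_J,B,H}(iii) already yields ${\bar{\eta }}_{k}={\eta }_{*}$ pointwise, so the limit equation is obtained directly from the ${L}^{1}([0,T]\times \bar{\Omega })$ convergence of both sides of the truncated identity.
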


\medskip
\section{Approximate SPDEs } \label{sec:truncated_eq}

\medskip  \noindent
In this section will construct a sequence of stochastic equations approximating problem \eqref{eq:Hall-MHD_functional} and prove a priori estimates. This construction is based on the Fourier analysis. The deterministic background is the same as in \cite{EM'21_ArX_Hall-MHD_Gauss}. Since in the present paper we consider the Hall-MHD equation with the Poisson type noise terms, analysis of the stochastic part is different from \cite{EM'21_ArX_Hall-MHD_Gauss}. Approximation of this type is closely related to the Littlewood-Paley decomposition, see \cite{Bahouri+Chemin+Danchin'11}, and has also been used, e.g., in  \cite{Feff+McCorm+Rob+Rod'2014}, \cite{Mohan+Sritharan'16}, \cite{Manna+Mohan+Srith'2017} and \cite{Brze+Dha'20}.

\medskip
\subsection{The subspaces $\Hn $ and the  operators $\Pn $}

\medskip  \noindent
First we recall the construction of the sequence of subspaces ${(\Hn )}_{n\in \mathbb{N}}$ and associated sequence of operators ${(\Pn )}_{n \in \mathbb{N} }$.
Let 
\begin{equation*}
{\bar{B}}_{n} \; := \; \{ \xi \in {\mathbb{R} }^{3} : \; \nnorm{\xi }{}{} \le n \}  , \qquad n \in \mathbb{N} 
\end{equation*}
and let
\begin{equation*}
{H}_{n} \; := \; \{ v \in H: \; \; \supp \widehat{v} \in {\bar{B}}_{n}  \} .
\end{equation*}
In the subspace ${H}_{n}$ we consider the norm inherited from the  the space $H$ defined by \eqref{eq:H}.
For each $n\in \mathbb{N} $ let us define a map ${\pi }_{n}$ by
\begin{equation*}
{\pi }_{n} v \; := \; {\mathcal{F} }^{-1} (\ind{{\bar{B}}_{n}} \widehat{v}) , \qquad v \in H ,
\end{equation*}
where ${\mathcal{F} }^{-1}$ denotes  the inverse of the Fourier transform, see Appendix \ref{sec:Fourier_truncation}. 
By Remark \ref{rem:S_n-projection},  the map ${\pi }_{n}:H \to H_n$ is the orthogonal projection onto $H_n$.

\medskip  \noindent
Let
\begin{equation*}
{\bar{\ball }}_{n} \; := \; {\bar{B}}_{n} \times {\bar{B}}_{n}
\end{equation*}
and
\begin{equation}
 \Hn \; := H_n \times H_n. 
\label{eq:H_n} 
\end{equation} 
In the subspace $\Hn $ we consider the norm inherited from the space $\Hmath  $ defined by \eqref{eq:Hall-MHD_H-V}.
Let us define the  operator
\begin{equation}
\Pn \; := \;  {\pi }_{n} \times {\pi }_{n} : \Hmath \; \to \;  \Hn   .
\label{eq:P_n}
\end{equation}
Explicitly, for $\Phi = (\ubold ,\Bbold ) \in \Hmath $
\begin{equation*}
\Pn (\ubold ,\Bbold ) \; = \; ({\pi }_{n}\ubold , {\pi }_{n}\Bbold )
\; = \; ({\mathcal{F} }^{-1} (\ind{{\bar{B}}_{n}} \widehat{\ubold }), {\mathcal{F} }^{-1} (\ind{{\bar{B}}_{n}} \widehat{\Bbold })) .
\end{equation*}
Since the map ${\pi }_{n}:H \to H_n$ is the orthogonal projection onto $H_n$, we infer that 
\begin{equation}
\Pn : \Hmath \to \Hn 
\label{eq:P_n_proj}
\end{equation}
is the orthogonal projection onto $\Hn $. 

\medskip  \noindent
The following lemma states that the subspaces $\Hn $ are embedded in the spaces 
${\Vmath }_{m_1,m_2}$ for $m_1,m_2\ge 0 $, defined by \eqref{eq:Vmath_m1,m2} with the equivalence of appropriate norms.

\medskip
\begin{lemma} \label{lem:H_n-V_m1,m2-relation}  
(See \cite[Lemma 4.1]{EM'21_ArX_Hall-MHD_Gauss}.)
Let $n \in \mathbb{N} $ and  ${m}_{1},m_2 \ge 0 $.
Then
\begin{equation*}
\Hn \; \hookrightarrow  \; {\Vmath }_{m_1,m_2},   
\end{equation*}
and   for all  $  u \in \Hn :$
\begin{equation*}
\norm{u}{{m_1,m_2}}{2} \; \le \;  {(1+{n}^{2})}^{m} \, \nnorm{u}{\Hn }{2} ,  
\end{equation*}
where $m=\max \{ m_1,m_2\} $.
\end{lemma}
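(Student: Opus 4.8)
The plan is to reduce everything to the Fourier-side characterization of the $H^m$-norm and to exploit the band-limitation built into the definition of $\Hn $. Recall that for $m\ge 0$ the Sobolev norm admits the representation
\[
\norm{v}{H^m(\mathbb{R}^3,\mathbb{R}^3)}{2} \; = \; \int_{\mathbb{R}^3} {(1+\nnorm{\xi}{}{2})}^{m} \, \nnorm{\widehat{v}(\xi)}{}{2} \, d\xi ,
\]
and that, by Plancherel's theorem, $\nnorm{v}{H}{2}=\nnorm{v}{L^2}{2}=\int_{\mathbb{R}^3}\nnorm{\widehat{v}(\xi)}{}{2}\,d\xi$. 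Since the inner product on $V_m$ is the one inherited from $H^m$, we have $\norm{v}{V_m}{}=\norm{v}{H^m}{}$ for $v\in V_m$.

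First I would verify the inclusion $H_n\subset V_m$ for every $m\ge 0$. Fix $v\in H_n$. By definition $v\in H$, so $v$ is divergence free, and $\supp\widehat{v}\subset\bar{B}_n$. Because $\widehat{v}$ is supported in the ball $\bar{B}_n$ and lies in $L^2$, the weight ${(1+\nnorm{\xi}{}{2})}^{m}$ is bounded on $\supp\widehat{v}$, whence the right-hand side of the displayed identity is finite and $v\in H^m(\mathbb{R}^3,\mathbb{R}^3)$. To conclude $v\in V_m$ I would invoke the standard characterization of $V_m$ as the space of all divergence-free vector fields in $H^m(\mathbb{R}^3,\mathbb{R}^3)$; equivalently, one approximates $v$ in the $H^m$-norm by elements of $\vcal $ via mollification together with a spatial cut-off followed by the Leray projection, which restores the divergence-free condition. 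This identifies $H_n$ as a subspace of $V_m$, and hence $\Hn \hookrightarrow {\Vmath }_{m_1,m_2}$.

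The norm estimate is then the elementary core of the argument. For $v\in H_n$ the support condition $\supp\widehat{v}\subset\bar{B}_n$ gives $\nnorm{\xi}{}{}\le n$ on $\supp\widehat{v}$, hence ${(1+\nnorm{\xi}{}{2})}^{m'}\le{(1+n^2)}^{m'}$ there for any $m'\ge 0$. Integrating,
\[
\norm{v}{V_{m'}}{2}=\int_{\bar{B}_n}{(1+\nnorm{\xi}{}{2})}^{m'}\nnorm{\widehat{v}(\xi)}{}{2}\,d\xi \;\le\; {(1+n^2)}^{m'}\int_{\bar{B}_n}\nnorm{\widehat{v}(\xi)}{}{2}\,d\xi = {(1+n^2)}^{m'}\nnorm{v}{H}{2}.
\]
Applying this with $m'=m_1$ to the first component $\ubold $ and with $m'=m_2$ to the second component $\Bbold $ of $u=(\ubold ,\Bbold )\in\Hn $, and recalling the product norm \eqref{eq:Vmath_m1,m2-norm}, I obtain
\[
\norm{u}{m_1,m_2}{2}=\norm{\ubold }{V_{m_1}}{2}+\norm{\Bbold }{V_{m_2}}{2}\le{(1+n^2)}^{m_1}\nnorm{\ubold }{H}{2}+{(1+n^2)}^{m_2}\nnorm{\Bbold }{H}{2}.
\]
Since $1+n^2\ge 1$ and $m=\max\{m_1,m_2\}\ge m_i$, both factors are dominated by ${(1+n^2)}^{m}$, so the right-hand side is at most ${(1+n^2)}^{m}\bigl(\nnorm{\ubold }{H}{2}+\nnorm{\Bbold }{H}{2}\bigr)={(1+n^2)}^{m}\nnorm{u}{\Hn }{2}$, which is the claimed inequality.

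The only non-computational point is the characterization $V_m=\{v\in H^m(\mathbb{R}^3,\mathbb{R}^3):\diver v=0\}$ needed to place $H_n$ inside $V_m$; the estimate itself is a one-line consequence of the support restriction and Plancherel's theorem. I therefore expect the density/characterization step to be the main obstacle, and it is standard rather than delicate.
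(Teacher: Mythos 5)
Your proof is correct and follows essentially the same route as the paper, which delegates this lemma to \cite[Lemma 4.1]{EM'21_ArX_Hall-MHD_Gauss} and whose scalar version appears here as Lemma \ref{lem:Ld_n-H^s-relation}: Plancherel plus the bound ${(1+\nnorm{\xi }{}{2})}^{m'}\le {(1+n^2)}^{m'}$ on $\supp \widehat{v}\subset {\bar{B}}_{n}$, applied componentwise with the product norm \eqref{eq:Vmath_m1,m2-norm}. Your handling of the membership $H_n\subset V_m$ via the whole-space characterization of $V_m$ as divergence-free $H^m$ fields is the standard way to close that step.
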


\medskip
\begin{cor} \label{cor:H_n-V_m1,m2-norm_equiv} 
(See \cite[Corollary 4.2]{EM'21_ArX_Hall-MHD_Gauss}.)
On the subspace $\Hn $ the norm $\nnorm{\cdot}{\Hn }{}$ and the norms  $\norm{\cdot }{{m_1,m_2}}{}$, for $m_1,m_2\ge 0 $, inherited from the spaces ${\Vmath }_{m_1,m_2}$ are equivalent (with appropriate constants depending on $m_1,m_2$ and $n$).   
\end{cor}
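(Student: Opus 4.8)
The plan is to obtain the asserted equivalence by combining the upper estimate already established in Lemma \ref{lem:H_n-V_m1,m2-relation} with a trivial reverse inequality coming from the monotonicity of Sobolev norms, namely the continuous embedding $H^{m} \hookrightarrow L^{2}$ valid for every $m \ge 0$.

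First I would record the easy direction. Fix $u = (\ubold ,\Bbold ) \in \Hn $. Since for any $m \ge 0$ the Fourier multiplier $(1+{|\xi |}^{2})^{m}$ satisfies $(1+{|\xi |}^{2})^{m} \ge 1$, Plancherel's identity gives $\nnorm{w}{H}{} = \nnorm{w}{L^2}{} \le \norm{w}{V_m}{}$ for every $w \in V_m$. Applying this to the two components, with $m=m_1$ and $m=m_2$ respectively, and summing the squares yields
\[
\nnorm{u}{\Hn }{2} = \nnorm{\ubold }{H}{2} + \nnorm{\Bbold }{H}{2}
\le \norm{\ubold }{V_{m_1}}{2} + \norm{\Bbold }{V_{m_2}}{2} = \norm{u}{{m_1,m_2}}{2},
\]
so that $\nnorm{u}{\Hn }{} \le \norm{u}{{m_1,m_2}}{}$ for all $u \in \Hn $.

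Next I would invoke Lemma \ref{lem:H_n-V_m1,m2-relation} for the reverse bound: with $m = \max \{ m_1,m_2\}$ one has $\norm{u}{{m_1,m_2}}{2} \le (1+{n}^{2})^{m}\, \nnorm{u}{\Hn }{2}$ for every $u \in \Hn $. Chaining the two estimates produces
\[
\nnorm{u}{\Hn }{} \; \le \; \norm{u}{{m_1,m_2}}{} \; \le \; {(1+{n}^{2})}^{m/2}\, \nnorm{u}{\Hn }{},
\]
which is precisely the claimed equivalence on $\Hn $, with equivalence constants $1$ and ${(1+{n}^{2})}^{m/2}$ depending only on $m_1,m_2$ and $n$.

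There is no genuine obstacle here: both inequalities are immediate, one from the trivial dominance of the $H^m$-norm over the $L^2$-norm for $m\ge 0$, and the other from the already-cited lemma. The only point worth emphasizing is that the finiteness of the upper constant rests entirely on the spectral-support restriction $\supp \widehat{v} \subset {\bar{B}}_{n}$ built into the definition of $H_n$; this is exactly the hypothesis that makes Lemma \ref{lem:H_n-V_m1,m2-relation} applicable, and without such a truncation no $n$-dependent bound of this form could hold on the full space $H$.
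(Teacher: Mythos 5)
Your proof is correct and follows the same route the paper intends: the corollary is stated as an immediate consequence of Lemma \ref{lem:H_n-V_m1,m2-relation}, which supplies the nontrivial upper bound $\norm{u}{{m_1,m_2}}{}\le {(1+{n}^{2})}^{m/2}\nnorm{u}{\Hn }{}$, while the reverse inequality $\nnorm{u}{\Hn }{}\le \norm{u}{{m_1,m_2}}{}$ is exactly the trivial Plancherel estimate you record. Nothing further is needed.
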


\medskip 
\noindent
Let us recall properties of the operators $\Pn $ in the spaces ${\Vmath }_{m_1,m_2}$ defined by \eqref{eq:Vmath_m1,m2}

\medskip
\begin{lemma} \label{lem:P_n-pointwise_conv}  
(See \cite[Lemma 4.3]{EM'21_ArX_Hall-MHD_Gauss}.)
Let us fix $m_1 ,m_2 \ge 0 $.   
Then for all $n \in \mathbb{N} $:
\[
\Pn  : {\Vmath }_{m_1,m_2} \; \to \; {\Vmath }_{m_1,m_2}
\]
is well defined linear and bounded. Moreover,  for every $ u \in {\Vmath }_{m_1,m_2}:$
\begin{align*}
\lim_{n \to \infty } \norm{\Pn u-u}{{m_1,m_2}}{}  \; = \; 0 . 
\end{align*} 
\end{lemma}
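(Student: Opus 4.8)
The plan is to reduce the statement to a one-component frequency-truncation estimate and then argue on the Fourier side. Since $\Pn = {\pi}_{n}\times {\pi}_{n}$ acts componentwise and the norm on ${\Vmath}_{m_1,m_2}$ is the product norm \eqref{eq:Vmath_m1,m2-norm}, for $\phi = (\ubold,\Bbold)$ we have
\[
\norm{\Pn \phi - \phi}{{m_1,m_2}}{2} \; = \; \norm{{\pi}_{n}\ubold - \ubold}{V_{m_1}}{2} + \norm{{\pi}_{n}\Bbold - \Bbold}{V_{m_2}}{2},
\]
so it is enough to prove, for each fixed $m \ge 0$, that ${\pi}_{n} : V_m \to V_m$ is well defined, linear, bounded uniformly in $n$, and that $\norm{{\pi}_{n}u - u}{V_m}{} \to 0$ for every $u \in V_m$. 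Well-definedness into $V_m$ is immediate: by construction ${\pi}_{n}u \in H_n$, and $H_n \hookrightarrow V_m$ is the single-component content of Lemma \ref{lem:H_n-V_m1,m2-relation}; linearity is clear.

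For boundedness and convergence I would use the Fourier representation of the $H^m$-norm inherited by $V_m$, namely $\norm{u}{H^m}{2} = \int_{{\mathbb{R}}^{3}} {w}_{m}(\xi)\, {|\widehat{u}(\xi)|}^{2} \, d\xi$ with a nonnegative weight ${w}_{m}$ (one may take ${w}_{m}(\xi) = {(1+{|\xi|}^{2})}^{m}$ in the Bessel-potential convention). Since $\widehat{{\pi}_{n}u} = \ind{{\bar{B}}_{n}}\widehat{u}$, restricting the integration of a nonnegative integrand to a smaller domain gives at once
\[
\norm{{\pi}_{n}u}{H^m}{2} \; = \; \int_{\{ {|\xi|} \le n \}} {w}_{m}(\xi)\, {|\widehat{u}(\xi)|}^{2} \, d\xi \; \le \; \norm{u}{H^m}{2},
\]
so ${\pi}_{n}$ is a contraction on $V_m$, uniformly in $n$.

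For the pointwise convergence I would compute, again from $\widehat{{\pi}_{n}u} = \ind{{\bar{B}}_{n}}\widehat{u}$,
\[
\norm{{\pi}_{n}u - u}{V_m}{2} \; = \; \int_{\{ {|\xi|} > n \}} {w}_{m}(\xi)\, {|\widehat{u}(\xi)|}^{2} \, d\xi,
\]
which tends to $0$ as $n \to \infty$ by dominated convergence, the integrand ${w}_{m}(\xi)\, {|\widehat{u}(\xi)|}^{2}$ being integrable over ${\mathbb{R}}^{3}$ because $u \in V_m \subset H^m$, while $\ind{\{ {|\xi|} > n \}} \to 0$ pointwise. Summing the two components then yields $\norm{\Pn \phi - \phi}{{m_1,m_2}}{} \to 0$ for every $\phi \in {\Vmath}_{m_1,m_2}$, and the contraction estimate passes to the product in the same way. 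I do not expect a genuine obstacle here: this is a standard high-frequency cut-off argument. The only point deserving a word is that ${\pi}_{n}$ keeps $u$ inside the divergence-free space $V_m$ and not merely in $H^m$; this is precisely what the embedding ${\Hn} \hookrightarrow {\Vmath}_{m_1,m_2}$ of Lemma \ref{lem:H_n-V_m1,m2-relation} records, the multiplier $\ind{{\bar{B}}_{n}}$ preserving the frequency constraint $\xi \cdot \widehat{u}(\xi) = 0$ that characterizes divergence-free fields.
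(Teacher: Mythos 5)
Your proposal is correct and follows essentially the same route as the paper: the paper defers to \cite[Lemma 4.3]{EM'21_ArX_Hall-MHD_Gauss}, whose content is precisely the componentwise reduction to the cut-off operator ${S}_{n}$ together with the Fourier-side contraction and dominated-convergence argument that the paper itself records in Appendix \ref{sec:Fourier_truncation} (Lemma \ref{lem:S_n-pointwise_conv}), plus the embedding $\Hn \hookrightarrow {\Vmath}_{m_1,m_2}$ of Lemma \ref{lem:H_n-V_m1,m2-relation} to keep the truncation inside the divergence-free scale. Your handling of the only delicate point --- that ${\pi}_{n}$ maps $V_m$ into $V_m$ and not merely into $H^m$ --- is the right one.
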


\medskip  \noindent
From Lemma \ref{lem:P_n-pointwise_conv} we obtain the following corollary.

\medskip
\begin{cor} \label{cor:P_n-pointwise_conv} 
(See \cite[Corollary 4.4]{EM'21_ArX_Hall-MHD_Gauss}.)
\begin{description}
\item[(i) ]  $\Pn \in \mathcal{L} (\Vmath , \Vmath )$, and for all $u \in \Vmath $
\[
\lim_{n\to \infty } \norm{ \Pn u -u }{\Vmath }{} \; = \; 0,
\]
\item[(ii) ] For every $m \ge 0 $,  $\Pn \in \mathcal{L} ({\Vmath }_{m} , {\Vmath }_{m})$   and for all $u \in {\Vmath }_{m} $
\[
\lim_{n\to \infty } \norm{ \Pn u -u }{{\Vmath }_{m} }{} \; = \; 0,
\]
\item[(iii) ] For every $m \ge 1$, $\Pn \in \mathcal{L} ( {\Vmath }_{m} , \Vmath )$  and for all $u \in {\Vmath }_{m} $
\[
\lim_{n\to \infty } \norm{ \Pn u -u }{\Vmath }{} \; = \; 0.
\]
\end{description}
The spaces $\Vmath $ and  ${\Vmath }_{m}$ is defined by \eqref{eq:Hall-MHD_H-V} and \eqref{eq:Vmath_m}, respectively.
\end{cor}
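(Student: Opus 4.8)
The plan is to obtain all three assertions directly from Lemma \ref{lem:P_n-pointwise_conv} by specializing the two free indices $m_1,m_2$ to diagonal values and then reconciling the product norms $\norm{\cdot}{m_1,m_2}{}$ with the norms that actually appear in the statement. The only content beyond the lemma is bookkeeping: handling the positive weights $\nu_1,\nu_2$ hidden in $\norm{\cdot}{\Vmath}{}$ through \eqref{eq:Hall-MHD_V-norm}, and invoking the Sobolev embedding $V_m\hookrightarrow V$, valid for $m\ge 1$.

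Part (ii) is immediate: taking $m_1=m_2=m$ in Lemma \ref{lem:P_n-pointwise_conv} and using the identifications $\Vmath_{m,m}=\Vmath_m$ and $\norm{\cdot}{m,m}{}=\norm{\cdot}{m}{}$ from \eqref{eq:Vmath_m} gives both $\Pn\in\mathcal{L}(\Vmath_m,\Vmath_m)$ and $\lim_{n\to\infty}\norm{\Pn u-u}{m}{}=0$. For part (i) I would take $m_1=m_2=1$, so that $\Vmath_{1,1}=V_1\times V_1=\Vmath$ as sets and the lemma yields $\Pn\in\mathcal{L}(\Vmath_{1,1},\Vmath_{1,1})$ with $\norm{\Pn u-u}{1,1}{}\to 0$. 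It then remains to observe that the product $H^1$-norm $\norm{\cdot}{1,1}{}$ and the weighted norm $\norm{\cdot}{\Vmath}{}$ of \eqref{eq:Hall-MHD_V-norm} are equivalent: since $\nu_1,\nu_2$ are strictly positive constants one has $c\,\norm{\phi}{1,1}{2}\le\norm{\phi}{\Vmath}{2}\le C\,\norm{\phi}{1,1}{2}$ with $c=\min\{1,\nu_1,\nu_2\}$ and $C=\max\{1,\nu_1,\nu_2\}$. Since equivalent norms preserve both operator boundedness and convergence to zero, this gives (i).

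For part (iii), fix $m\ge 1$. In contrast to (i), the norms $\norm{\cdot}{m}{}$ and $\norm{\cdot}{\Vmath}{}$ are no longer equivalent, but $\Vmath_m$ embeds continuously into $\Vmath$. Indeed, for $m\ge 1$ there is a constant $C_m$ with $\norm{v}{H^1}{}\le C_m\norm{v}{H^m}{}$, whence $V_m\hookrightarrow V$ and $\Vmath_m\hookrightarrow\Vmath$. The boundedness $\Pn\in\mathcal{L}(\Vmath_m,\Vmath)$ then factors through $\Vmath_m\hookrightarrow\Vmath\xrightarrow{\Pn}\Vmath$, namely $\norm{\Pn u}{\Vmath}{}\le\norm{\Pn}{\mathcal{L}(\Vmath,\Vmath)}{}\,C_m\,\norm{u}{m}{}$, using part (i). Finally, for $u\in\Vmath_m\subset\Vmath$ the convergence $\norm{\Pn u-u}{\Vmath}{}\to 0$ is just part (i) applied to $u$, which is legitimate precisely because $m\ge 1$ places $u$ in $\Vmath$.

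I do not expect any genuine obstacle here: the corollary is a purely formal consequence of the two-parameter Lemma \ref{lem:P_n-pointwise_conv}. The two points requiring care are that the viscosity and resistivity constants $\nu_1,\nu_2$ only rescale the norm and hence affect neither boundedness nor convergence, and that the hypothesis $m\ge 1$ in (iii) is exactly what makes the embedding $\Vmath_m\hookrightarrow\Vmath$, and therefore the reduction to (i), available.
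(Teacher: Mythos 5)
Your proposal is correct and follows the same route as the paper, which derives the corollary directly from Lemma \ref{lem:P_n-pointwise_conv} by specializing $m_1=m_2$ and using the continuous embedding ${\Vmath }_{m}\hookrightarrow \Vmath $ for $m\ge 1$. Your extra care with the equivalence of $\norm{\cdot }{1,1}{}$ and the weighted norm $\norm{\cdot }{\Vmath }{}$ is harmless and if anything slightly more precise than the paper's identification $\norm{\cdot }{1}{}=\norm{\cdot }{\Vmath }{}$.
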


\medskip
\subsection{The truncated SPDEs}

\medskip  \noindent
Let us consider the following approximation in the space $\Hn $ defined by \eqref{eq:H_n}. 

\medskip
\begin{definition} \label{def:trunctated_weak} \rm
Let ${\X }_{0} \in \Hmath $.
By a \bf truncated approximation \rm  of equation 
\eqref{eq:Hall-MHD_functional}  we mean an $\Hn $-valued  c\`{a}dl\`{a}g, $\Fmath $-adapted  process  ${\{ \Xn (t) \} }_{t \in [0,T]}$  such that for all $t\in [0,T]$ and $\phi \in \Hn $ the following identity holds $\p $ - a.s. 
\begin{equation}
\begin{split}
&\ilsk{\Xn (t)}{\phi}{\Hmath }
+ \int_{0}^{t} \dual{ \mathcal{A}  \Xn (s)}{ \phi }{} \, ds
+ \int_{0}^{t} \dual{ \MHD (\Xn (s))}{\phi}{}  ds
+ \int_{0}^{t} \dual{ \tHall (\Xn (s))}{ \phi }{} \, ds
   \\
 &= \,\, \ilsk{{\X }_{0}}{\phi}{\Hmath }
+ \int_{0}^{t} \int_{Y}  \ilsk{F(s,\Xn ({s}^{-});y ) }{\phi }{\Hmath } \, \tilde{\eta }(ds,dy).
\end{split}   \label{eq:Hall-MHD_truncated_weak}
\end{equation}
\end{definition}

\medskip
\noindent
Using Lemma \ref{lem:H_n-V_m1,m2-relation}, Corollary \ref{cor:H_n-V_m1,m2-norm_equiv} and
 the Riesz representation theorem for continuous linear functionals on $\Hn$, 
we will rewrite identity \eqref{eq:Hall-MHD_truncated_weak} as a stochastic equation in the space $\Hn $ 
(with the $\ilsk{\cdot }{\cdot }{\Hmath }$-inner product).

\medskip  
\begin{remark} \label{rem:truncated_funct_Riesz}  \rm 
(See \cite[Remark 4.7]{EM'21_ArX_Hall-MHD_Gauss}.)
 For fixed $v \in \Vmath $, the maps
\begin{align}
&\Hn \ni \varphi \,\, \mapsto \,\, \ddual{\Vprime }{ \mathcal{A} v}{ \varphi }{\Vmath } \in \mathbb{R} ,  
\label{eq:Acal_Hn}
\\
&\Hn \ni \varphi \; \mapsto \; \ddual{\Vprime }{ \MHD (v)}{ \varphi }{\Vmath } \in \mathbb{R} ,  
\label{eq:Bn_Hn}
\\
&\Hn \ni \varphi \; \mapsto \; \ddual{\Vprime }{ \tHall (v)}{ \varphi }{\Vmath } \in \mathbb{R} .  
\label{eq:R_Hn}
\end{align}
are continuous linear functionals on $\Hn $.
Let $\ARiesz (v), \BRiesz (v), \RRiesz (v)  \in \Hn$ denote the Riesz representations in $\Hn$ of 
the functionals  \eqref{eq:Acal_Hn}, \eqref{eq:Bn_Hn_Riesz}, \eqref{eq:R_Hn}, respectively. Then we have for every $\varphi \in \Hn $
\begin{align}
\ddual{\Vprime }{ \mathcal{A} v}{ \varphi }{\Vmath }  
\; &= \; \ilsk{\ARiesz(v)}{\varphi }{\Hmath } , 
\label{eq:Acal_Hn_Riesz}
\\
\ddual{\Vprime }{ \MHD (v)}{ \varphi }{\Vmath }  
\; &= \; \ilsk{\BRiesz(v)}{\varphi }{\Hmath } , 
\label{eq:Bn_Hn_Riesz}
\\
\ddual{{\Vmath }_{1,2}' }{ \tHall (v)}{ \varphi }{{\Vmath }_{1,2}}  
\; &= \; \ilsk{\RRiesz(v)}{\varphi }{\Hmath }  .
\label{eq:R_Hn_Riesz}
\end{align}
Since $\mathcal{A} $ is linear, the map $\Vmath \ni v \mapsto \ARiesz(v) \in \Hn  $ is linear as well.

\medskip  \noindent
Since $\Pn : \Hmath \to \Hn $ is the $\ilsk{\cdot }{\cdot }{\Hmath }$-orthogonal projection,  we have
\begin{equation*}
\ilsk{v}{\Pn \varphi }{\Hmath } \; = \; \ilsk{\Pn v}{\varphi }{\Hmath } \quad \mbox{ for all } \quad  \varphi \in \Hn . 
\label{eq:Pn_Hn_Riesz}
\end{equation*}
In particular, for a fixed $v \in \Hmath $ the Riesz representation of the linear functional
$
\Hn \ni \varphi  \,\, \mapsto \,\, \ilsk{v}{\varphi }{\Hmath } \in \mathbb{R} 
$
is equal $\Pn v$.
\end{remark}

\medskip  \noindent
Using Remark \ref{rem:truncated_funct_Riesz} we can write the integral identity \eqref{eq:Hall-MHD_truncated_weak} (tested by the functions from the subspace $\Hn $) in terms of appropriate Riesz representations.
In fact, 
integral identity \eqref{eq:Hall-MHD_truncated_weak}
is equivalent to the following stochastic equation in $\Hn$
\begin{equation}
\begin{split}
&  \Xn (t)  + \int_{0}^{t}\bigl[ \ARiesz( \Xn (s)) + \BRiesz  (\Xn (s)) +\RRiesz ( \Xn (s)) \bigr] \, ds \\
\,\, &= \,\,  \Pn {\X }_{0}  + \int_{0}^{t} \int_{Y}  \Fn (s,\Xn ({s}^{-});y )  \, \tilde{\eta }(ds,dy) ,
\quad  t \in [0,T]  ,   
\end{split} 
\label{eq:Hall-MHD_truncated}
\end{equation}
where $\Fn $ is a map defined by
\begin{equation}
\Fn :[0,T]\times \Hmath \times Y \ni (s,X,y)  \to \Pn F(t,X;y ) \in \Hn \subset \Hmath .
\label{eq:F_n}
\end{equation}

\medskip
\begin{prop} \label{prop:Hall-MHD_truncated_existence}
Let Assumption \ref{ass:data+noise_Poisson}  be satisfied and let ${\X }_{0} \in \Hmath $.
For each $n \in \mathbb{N} $, there exists a unique global solution ${(\Xn (t))}_{t\in [0,T]}$ of equation \eqref{eq:Hall-MHD_truncated} with $\Hn $-valued c\`{a}dl\`{a}g trajectories. 
\end{prop}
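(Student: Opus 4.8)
The plan is to recognize equation \eqref{eq:Hall-MHD_truncated} as a stochastic evolution equation in the \emph{finite-dimensional} Hilbert space $\Hn$ driven by the compensated Poisson random measure $\tilde{\eta }$, and to establish existence and uniqueness by combining a local existence argument with a non-explosion (global-in-time) argument via a priori estimates. By Corollary \ref{cor:H_n-V_m1,m2-norm_equiv}, on $\Hn$ the $\Hmath $-norm is equivalent to every $\norm{\cdot }{m_1,m_2}{}$-norm, so all the maps $\ARiesz , \BRiesz , \RRiesz : \Hn \to \Hn$ are well-defined and, crucially, inherit \emph{local Lipschitz} continuity on bounded subsets of $\Hn$: the operator $\ARiesz$ is linear and bounded on $\Hn$ (Remark \ref{rem:Acal-term_properties} together with the norm equivalence), while the local Lipschitz bounds for $\MHD$ and $\tHall $ in Lemma \ref{lem:MHD-term_properties}(ii) and Lemma \ref{lem:tHall-term_properties}(ii), read through the Riesz representation and the norm equivalence on $\Hn$, give local Lipschitz continuity of $\BRiesz$ and $\RRiesz$ on $\Hn$. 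Likewise, the noise coefficient $\Fn (s,\cdot ;y) = \Pn F(s,\cdot ;y)$ is globally Lipschitz in the $L^2(\mu )$-square-integrated sense by \eqref{eq:F_Lipschitz_cond} and the contractivity of $\Pn$, and satisfies the linear growth bound \eqref{eq:F_linear_growth}.

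First I would fix $n$ and localize: for $R>0$ replace $\BRiesz , \RRiesz$ by globally Lipschitz truncations (e.g.\ multiply by a smooth cutoff of $\nnorm{\Xn }{\Hn }{}$ supported on $\{ \nnorm{\cdot }{\Hn }{} \le R\} $), so that the modified drift becomes globally Lipschitz on all of $\Hn$. For such an equation a standard fixed-point / Banach contraction argument in the space of $\Hn$-valued c\`{a}dl\`{a}g adapted processes with $\bar{\e }\sup_{t}\nnorm{\cdot }{\Hn }{2}<\infty $, using the It\^{o} isometry \eqref{eq:isometry} for the stochastic integral against $\tilde{\eta }$ and the Lipschitz bounds on drift and diffusion, yields a unique global solution $\Xn ^{R}$ of the truncated-in-$R$ equation. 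This is the routine part.

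The key step is then to remove the $R$-truncation by an \emph{a priori energy estimate} that prevents explosion, and this is where the Hall and MHD nonlinearities must cancel. Applying the It\^{o} formula for c\`{a}dl\`{a}g processes to $\nnorm{\Xn (t)}{\Hmath }{2}$ (valid since $\Xn$ takes values in the finite-dimensional $\Hn$) and pairing the drift with $\Xn $, the critical observation is that the nonlinear terms drop out: by \eqref{eq:MHD-map_perp} one has $\dual{\MHD (\Xn )}{\Xn }{}=0$, and since $\Xn (s)\in \Hn \hookrightarrow \Vtest $ the antisymmetry \eqref{eq:tHall-map_perp} gives $\dual{\tHall (\Xn )}{\Xn }{}=0$. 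Hence only the coercive term $\dual{\mathcal{A}\Xn }{\Xn }{}=\norm{\Xn }{}{2}$ and the noise contribution survive, and the isometry \eqref{eq:isometry} together with the linear growth \eqref{eq:F_linear_growth} (with $q=2$) produces, after taking expectations and applying Gronwall, a bound on $\bar{\e }\bigl[ \sup_{t}\nnorm{\Xn (t)}{\Hmath }{2}+\int_{0}^{T}\norm{\Xn (t)}{\Vmath }{2}\, dt\bigr] $ \emph{independent of} $R$. Defining the stopping time $\tau _{R}:=\inf \{ t:\nnorm{\Xn (t)}{\Hn }{}\ge R\} $, this uniform estimate forces $\tau _{R}\to T$ a.s.\ as $R\to \infty $, so the truncation is vacuous on $[0,T]$ and $\Xn ^{R}$ stabilizes to a genuine global solution. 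Uniqueness follows from the local Lipschitz property together with this same non-explosion bound.

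The main obstacle I anticipate is not the local existence (which is textbook once the norm equivalence is invoked) but justifying the It\^{o} formula and the precise cancellation for the \emph{Hall} term: one must check that $\Xn (s)$ genuinely lies in the test space $\Vtest $ so that \eqref{eq:tHall-map_perp} applies, which is exactly guaranteed by Lemma \ref{lem:H_n-V_m1,m2-relation} (giving $\Hn \hookrightarrow {\Vmath }_{1,2}$), and that the jump terms in the It\^{o} formula are controlled by the $q$-th moment growth bounds \eqref{eq:F_linear_growth} uniformly in $R$. Obtaining the higher moment bound $\sup_{t}\nnorm{\Xn (t)}{\Hmath }{q}$ for $q\le p$, needed later, requires applying It\^{o} to $\nnorm{\Xn }{\Hmath }{q}$ and using \eqref{eq:F_linear_growth} for the corresponding $q$; the Poisson jump integral demands the Burkholder-type / Kunita inequalities rather than the naive isometry, but the linear growth assumption is tailored precisely to close these estimates by Gronwall.
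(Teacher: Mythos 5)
Your proposal is correct in substance and verifies exactly the same structural facts that the paper's proof rests on: the local Lipschitz continuity of $\BRiesz$ and $\RRiesz$ on $\Hn$ obtained from Lemmas \ref{lem:MHD-term_properties}(ii) and \ref{lem:tHall-term_properties}(ii) through the Riesz representations and the norm equivalence of Corollary \ref{cor:H_n-V_m1,m2-norm_equiv}, the cancellation $\dual{\BRiesz (\Xn )+\RRiesz (\Xn )}{\Xn }{}=0$ coming from \eqref{eq:MHD-map_perp} and \eqref{eq:tHall-map_perp} (with $\Hn \hookrightarrow \Vtest$ supplied by Lemma \ref{lem:H_n-V_m1,m2-relation}), and the Lipschitz/linear-growth conditions on $\Fn =\Pn F$. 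The difference is that the paper stops there: having checked these hypotheses, it invokes the abstract existence and uniqueness theorem of Albeverio, Brze\'{z}niak and Wu \cite[Theorem 3.1]{Albeverio+Brzezniak+Wu'2010} for Poisson-driven SDEs with locally Lipschitz, one-sidedly controlled coefficients, whereas you re-derive that theorem from scratch (cutoff to a globally Lipschitz drift, Banach fixed point in the space of adapted c\`{a}dl\`{a}g processes, It\^{o} formula plus Gronwall and stopping times to remove the cutoff). Your route is more self-contained and makes visible where the cancellation of the nonlinearities is actually used to prevent explosion; the paper's route is shorter and defers the routine machinery to the literature. Essentially the same a priori estimate you sketch is carried out in detail in the paper's Lemma \ref{lem:Hall-MHD_truncated_estimates}, after existence is already known.

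One factual slip you should correct: $\Hn$ is \emph{not} finite-dimensional. Since the spatial domain is ${\mathbb{R}}^{3}$, the space $H_n$ of divergence-free $L^2$ fields with Fourier support in ${\bar{B}}_{n}$ is an infinite-dimensional Hilbert space, and the paper explicitly emphasizes that the truncated equations live in infinite-dimensional spaces. This does not break your argument --- the fixed-point scheme and the It\^{o} formula for $\nnorm{\cdot }{\Hmath }{q}$ applied to processes of the form ``initial datum plus Bochner integral plus compensated Poisson integral'' are valid in any separable Hilbert space (this is the It\^{o} formula of Gy\"{o}ngy and Wu that the paper itself uses in the proof of Lemma \ref{lem:Hall-MHD_truncated_estimates}) --- but the justification ``valid since $\Xn$ takes values in the finite-dimensional $\Hn$'' must be replaced by an appeal to the Hilbert-space It\^{o} formula for jump processes. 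What actually makes the equation tractable is not finite-dimensionality but the fact that, by the norm equivalence on $\Hn$, all drift terms are locally Lipschitz maps from $\Hn$ into itself, so no unbounded operators appear.
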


\medskip
\begin{proof} 
By Lemmas \ref{lem:MHD-term_properties} and \ref{lem:tHall-term_properties}, for every $n\in \mathbb{N} $ the nonlinear terms  $\BRiesz $   and ${\tHall }_{n} $ are locally Lipschitz (with the Lipschitz constants dependent on $n$).
Moreover, by \eqref{eq:Bn_Hn_Riesz}, \eqref{eq:MHD-map_perp}, \eqref{eq:R_Hn_Riesz} and \eqref{eq:tHall-map_perp} for all $\Xn \in \Hn  $
\[
\dual{\BRiesz (\Xn ) + {\tHall }_{n} (\Xn )}{\Xn }{} \; = \; 0 .
\]
Now, the assertion follows from \cite[Theorem 3.1]{Albeverio+Brzezniak+Wu'2010}.
\end{proof}

\medskip
\subsection{A priori estimates} \label{sec:a_priori_est-truncated}

\medskip  \noindent
In this section we will prove some uniform estimates for the solutions $\{ \Xn , n \in \mathbb{N} \}$ of the truncated equation \eqref{eq:Hall-MHD_truncated}. 
These estimates will be used to prove the tightness of the family of laws of $\Xn $, $n \in \mathbb{N} $, on the functional space $\zcal $ defined by   \eqref{eq:Z_cadlag}. In fact, in the proof of tightness the estimates for $q=2$ from the following Lemma \ref{lem:Hall-MHD_truncated_estimates} are sufficient.
Higher order estimates will be used in the proof of the convergence in Section \ref{sec:main_th-proof}.

\medskip
\begin{lemma} \label{lem:Hall-MHD_truncated_estimates}
Let Assumption \ref{ass:data+noise_Poisson}  be satisfied,  
let ${\X }_{0} \in \Hmath $ and let ${(\Xn )}_{n\in \mathbb{N} }$ be the solutions of equations \eqref{eq:Hall-MHD_truncated}.
Then for every $q\ge 2 $ there exist positive constants ${C}_{1}(q)$ and ${C}_{2}(q)$
such that
\begin{equation} \label{eq:H_estimate_truncated_p}
\sup_{n \in \mathbb{N} }\mathbb{E} \Bigl[ \sup_{s\in [0,T] } \nnorm{\Xn (s)}{{\Hmath }}{q} \Bigr] 
\; \le \; {C}_{1}(q)
\end{equation}
and
\begin{equation} \label{eq:HV_estimate_truncated}
\sup_{n \in \mathbb{N} } \mathbb{E} \Bigl[ \int_{0}^{T} \nnorm{\Xn (s)}{{\Hmath }}{q-2} \norm{  \Xn (s)}{}{2} \, ds \Bigr] \; \le \;  {C}_{2}(q)  .
\end{equation}
In particular, there exists a positive constant ${C}_{2}$ such that 
\begin{equation} \label{eq:V_estimate_truncated}
\sup_{n \in \mathbb{N} } \mathbb{E} \Bigl[ \int_{0}^{T} \norm{\Xn (s)}{\Vmath }{2} \, ds \Bigr]
\; \le \;  {C}_{2}.
\end{equation}
\end{lemma}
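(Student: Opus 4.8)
The plan is to apply the It\^o formula for Hilbert-space-valued c\`adl\`ag jump processes to the function $\Phi(x) = \nnorm{x}{\Hmath}{q}$ along the solution $\Xn$ of \eqref{eq:Hall-MHD_truncated}, and then to close the resulting inequality by the Gronwall lemma. This is legitimate because, by Corollary \ref{cor:H_n-V_m1,m2-norm_equiv}, on the frequency-truncated space $\Hn$ all the relevant norms are equivalent; hence $\Xn$ is a genuine $\Hn$-valued semimartingale with locally Lipschitz drift coefficients, and the It\^o formula applies in the Hilbert space $\Hn$ (equivalently in $\Hmath$). I will first treat the exponent $q$ general, then specialise.

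The first step is to record the cancellation of the nonlinear terms. Testing the drift against $\Xn$ and using the Riesz identities \eqref{eq:Bn_Hn_Riesz} and \eqref{eq:R_Hn_Riesz} together with the antisymmetry properties \eqref{eq:MHD-map_perp} and \eqref{eq:tHall-map_perp} (the latter applicable since $\Hn \hookrightarrow {\Vmath}_{1,2}$ by Lemma \ref{lem:H_n-V_m1,m2-relation}), one obtains $\ilsk{\BRiesz(\Xn)}{\Xn}{\Hmath} = 0$ and $\ilsk{\RRiesz(\Xn)}{\Xn}{\Hmath} = 0$, while \eqref{eq:A_acal_rel} and \eqref{eq:Acal_Hn_Riesz} give $\ilsk{\ARiesz(\Xn)}{\Xn}{\Hmath} = \norm{\Xn}{}{2}$. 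Since $D\Phi(x) = q\nnorm{x}{\Hmath}{q-2} x$, the drift therefore contributes exactly $-q\nnorm{\Xn}{\Hmath}{q-2}\norm{\Xn}{}{2}$ to the It\^o expansion of $\nnorm{\Xn}{\Hmath}{q}$; this is precisely the coercive quantity appearing in \eqref{eq:HV_estimate_truncated}, and no control of the MHD or Hall nonlinearities is needed.

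The stochastic contribution splits into a mean-zero compensated martingale $M$ and a compensator. For the compensator I would use the Taylor-type pointwise bound $|\Phi(x+f)-\Phi(x)-\langle D\Phi(x),f\rangle| \le C_q(\nnorm{x}{\Hmath}{q-2}\nnorm{f}{\Hmath}{2}+\nnorm{f}{\Hmath}{q})$, integrate against $\mu$, and invoke the linear growth estimates \eqref{eq:F_linear_growth} for the exponents $2$ and $q$ (noting that $\Pn$ is a contraction, so $\Fn$ inherits these bounds), obtaining a bound of the form $C(1+\nnorm{\Xn}{\Hmath}{q})$. For the martingale $M$ I would apply the Burkholder--Davis--Gundy inequality: its predictable quadratic variation is controlled by $\int_Y |\Phi(x+f)-\Phi(x)|^2\,\mu(dy)$, which by the mean value inequality and \eqref{eq:F_linear_growth} (exponents $2$ and $2q$) is dominated by $C(1+\nnorm{\Xn}{\Hmath}{2q})$. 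Writing $\nnorm{\Xn(s)}{\Hmath}{2q}\le \sup_{r\le t}\nnorm{\Xn(r)}{\Hmath}{q}\cdot\nnorm{\Xn(s)}{\Hmath}{q}$ inside the BDG square root and applying Young's inequality produces a term $\varepsilon\,\e[\sup_{r\le t}\nnorm{\Xn(r)}{\Hmath}{q}]$, which will be absorbed on the left, plus a term controlled by $\e[\int_0^t \nnorm{\Xn(s)}{\Hmath}{q}\,ds]$.

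Collecting these estimates, taking the supremum over $[0,t]$ and expectations, and absorbing the small multiple of $\e[\sup_{r\le t}\nnorm{\Xn(r)}{\Hmath}{q}]$, yields an integral inequality for $g(t):=\e[\sup_{r\le t}\nnorm{\Xn(r)}{\Hmath}{q}]$ of the form $g(t)\le C_0 + C_1\int_0^t g(s)\,ds$, where $C_0$ depends on $\nnorm{{\X }_{0}}{\Hmath}{q}$ through $\nnorm{\Pn{\X }_{0}}{\Hmath}{}\le\nnorm{{\X }_{0}}{\Hmath}{}$ and on $T$ but not on $n$. Gronwall then gives \eqref{eq:H_estimate_truncated_p} uniformly in $n$. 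Returning to the It\^o identity for $\nnorm{\Xn}{\Hmath}{q}$ and taking plain expectations (so that $M$ drops out), the coercive drift term on the left yields \eqref{eq:HV_estimate_truncated} once its right-hand side is bounded via \eqref{eq:H_estimate_truncated_p}. Finally \eqref{eq:V_estimate_truncated} follows by taking $q=2$ in both \eqref{eq:HV_estimate_truncated} and \eqref{eq:H_estimate_truncated_p} and recalling from \eqref{eq:Hall-MHD_V-norm} that $\norm{\cdot}{\Vmath}{2}=\nnorm{\cdot}{\Hmath}{2}+\norm{\cdot}{}{2}$. I expect the main obstacle to be the bookkeeping of exponents in the jump and BDG estimates, ensuring that the top-order power $\nnorm{\Xn}{\Hmath}{q}$ is the one absorbed on the left so that Gronwall closes; it is here that the full strength of the linear growth condition \eqref{eq:F_linear_growth} for all $q\ge 1$ is essential.
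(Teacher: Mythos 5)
Your proposal follows essentially the same route as the paper's proof: It\^o's formula for $\nnorm{x}{\Hmath}{q}$, cancellation of the MHD and Hall terms via \eqref{eq:MHD-map_perp} and \eqref{eq:tHall-map_perp} together with the coercivity $\ilsk{\ARiesz \Xn}{\Xn}{\Hmath}=\norm{\Xn}{}{2}$, a Taylor-remainder bound of the form \eqref{eq:app_Taylor_I} for the jump correction, a maximal (BDG-type) inequality plus Young's inequality for the compensated martingale, and Gronwall. The one step you gloss over is the absorption of $\varepsilon\,\e\bigl[\sup_{r\le t}\nnorm{\Xn (r)}{\Hmath}{q}\bigr]$ into the left-hand side: this requires knowing a priori that this expectation is finite, which is not given at that stage; the paper handles it by first localizing with the stopping times $\taunR := \inf\{t:\nnorm{\Xn (t)}{\Hmath}{}>R\}\wedge T$, closing the estimate up to $\taunR$ via the Gronwall-type Lemma \ref{lem:A.1_Chueshov+Millet'10}, and then letting $R\to\infty$ by Fatou. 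With that localization inserted, your argument is the paper's.
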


\medskip \noindent
In the proof of Lemma \ref{lem:Hall-MHD_truncated_estimates} we will use the It\^{o} formula and the following version of the Gronwall lemma.

\medskip
\begin{lemma} \label{lem:A.1_Chueshov+Millet'10}
(See Lemma A.1 in \cite{Chueshov+Millet'10}.)
Let $X,Y,I$ and $\varphi $ be non-negative processes and $Z$ be a non-negative integrable random variable. Assume that $I$ is a non-decreasing and there exist non-negative constants $C,\alpha ,\beta ,\gamma ,\delta $ with the following properties
\begin{equation}
\int_{0}^{T} \varphi (s) \, ds \; \le \; C, \quad a.s., \qquad 2\beta {e}^{C} \; \le \; 1 ,
\qquad 2 \delta {e}^{C} \; \le \; \alpha ,
\label{eq:A.3_Chueshov+Millet'10}
\end{equation}
and such that for $0\le t \le T $,
\[
\begin{split}
& X(t) + \alpha Y(t) \; \le \; Z + \int_{0}^{t} \varphi (r)X(r) \, dr + I(t), \qquad a.s. , 
\\
& \e [I(t)] \; \le \; \beta \, \e [X(t)] + \gamma \, \int_{0}^{t} \e [X(s)] \, ds + \delta \e [Y(t)] + \tilde{C}, 
\end{split}
\]
where $\tilde{C}>0 $ is a constant. If $X \in {L}^{\infty } ([0,T]\times \Omega )$, then we have
\begin{equation} 
\e [X(t) + \alpha Y(t)] \; \le \; 2 \, \exp \bigl( C+ 2t \gamma {e}^{C} \bigr)
   \cdot \bigl( \e [Z] + \tilde{C} \bigr) , \qquad t \in [0,T].
\label{eq:A.4_Chueshov+Millet'10}
\end{equation}
\end{lemma}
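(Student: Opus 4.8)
The plan is to reduce the entire statement to a single scalar Gronwall inequality for the function $\Psi(t) := \e[X(t) + \alpha Y(t)]$. The first step is purely pathwise. Writing $U := X + \alpha Y$ and using $Y \ge 0$ (so that $X(r) \le U(r)$), the hypothesis $X(t) + \alpha Y(t) \le Z + \int_0^t \varphi(r) X(r)\,dr + I(t)$ yields the closed pathwise integral inequality
\[
U(t) \; \le \; \bigl( Z + I(t)\bigr) + \int_0^t \varphi(r) U(r)\,dr, \qquad t \in [0,T].
\]
Since $I$ is non-decreasing, the forcing term $r \mapsto Z + I(r)$ is non-decreasing, so the classical deterministic Gronwall lemma, applied $\omega$ by $\omega$ together with $\int_0^T \varphi(s)\,ds \le C$ a.s. from \eqref{eq:A.3_Chueshov+Millet'10}, gives
\[
U(t) \; \le \; \bigl( Z + I(t)\bigr)\exp\!\Bigl( \int_0^t \varphi(s)\,ds\Bigr) \; \le \; \bigl( Z + I(t)\bigr) e^{C}, \qquad t\in[0,T], \quad \text{a.s.}
\]
The integrability needed to run the pathwise Gronwall is supplied by the assumption $X \in L^\infty([0,T]\times\Omega)$.

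Next I would take expectations and feed in the second hypothesis. Taking $\e$ of the last display and inserting $\e[I(t)] \le \beta\e[X(t)] + \gamma\int_0^t\e[X(s)]\,ds + \delta\e[Y(t)] + \tilde C$ produces
\[
\e[X(t)] + \alpha\e[Y(t)] \; \le \; e^{C}\e[Z] + e^{C}\tilde C + e^{C}\beta\,\e[X(t)] + e^{C}\delta\,\e[Y(t)] + e^{C}\gamma\int_0^t\e[X(s)]\,ds.
\]
Here is the decisive point: the two smallness conditions in \eqref{eq:A.3_Chueshov+Millet'10} are calibrated precisely so that $e^C\beta \le \tfrac12$ absorbs half of $\e[X(t)]$ while $e^C\delta \le \tfrac{\alpha}{2}$ absorbs half of $\alpha\e[Y(t)]$. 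These two terms must be handled \emph{separately}: bounding both $\e[X(t)]$ and $\alpha\e[Y(t)]$ by the single quantity $\e[U(t)]$ would over-count and collapse the estimate. Moving $\tfrac12\e[X(t)] + \tfrac{\alpha}{2}\e[Y(t)]$ to the left-hand side and multiplying by $2$, I obtain
\[
\e[X(t)] + \alpha\e[Y(t)] \; \le \; 2e^{C}\bigl(\e[Z] + \tilde C\bigr) + 2e^{C}\gamma\int_0^t\e[X(s)]\,ds.
\]

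Finally, with $\Psi(t) := \e[X(t)] + \alpha\e[Y(t)]$ and using $\e[X(s)] \le \Psi(s)$ (because $\alpha Y \ge 0$), the previous line becomes the scalar inequality $\Psi(t) \le 2e^C(\e[Z]+\tilde C) + 2e^C\gamma\int_0^t\Psi(s)\,ds$, and the standard Gronwall lemma delivers exactly $\Psi(t) \le 2e^C(\e[Z]+\tilde C)\exp(2t\gamma e^C)$, which is \eqref{eq:A.4_Chueshov+Millet'10}. The only genuine obstacle is the bookkeeping around finiteness: the rearrangement in the second step is legitimate only once $\e[X(t)]$ and $\e[Y(t)]$ are known to be finite. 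Finiteness of $\e[X(t)]$ is immediate from $X \in L^\infty$; finiteness of $\e[Y(t)]$ follows from the same absorbed inequality, which forces $\tfrac{\alpha}{2}\e[Y(t)]$ below a finite quantity, and to make this fully rigorous I would first replace $Y$ by $Y\wedge N$, carry out the argument, and pass to the limit $N\to\infty$ by monotone convergence.
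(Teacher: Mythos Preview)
The paper does not supply its own proof of this lemma; it is stated with the attribution ``See Lemma A.1 in \cite{Chueshov+Millet'10}'' and then invoked as a black box in the a~priori estimates. So there is nothing in the paper to compare your argument against.

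Your argument is correct and is in fact the standard way this lemma is proved. The two-stage Gronwall (pathwise, then in expectation after absorbing $\beta e^{C}\e[X(t)]$ and $\delta e^{C}\e[Y(t)]$ into the left-hand side using \eqref{eq:A.3_Chueshov+Millet'10}) is exactly the mechanism behind the result, and your remark that the two absorption terms must be handled separately rather than lumped into $\e[U(t)]$ is the right observation. One small point worth tightening: to run the pathwise Gronwall on $U = X + \alpha Y$ you need $U$ locally integrable, and $Y$ is not assumed bounded. This is easily fixed by first bounding $\alpha Y(r)$ from the hypothesis itself, namely $\alpha Y(r) \le Z + \int_0^r \varphi X + I(r) \le Z + C\|X\|_{L^\infty} + I(t)$ for $r \le t$, so $U$ is pathwise bounded on $[0,t]$ and the Gronwall step is legitimate. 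Your truncation $Y \wedge N$ at the expectation stage is a valid alternative.
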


\medskip
\begin{proof}[Proof of Lemma \ref{lem:Hall-MHD_truncated_estimates}]
We apply the It\^{o} formula 
to the function $f$ defined by
\[ 
f(x) \; := \;  \nnorm{x}{{\Hmath }}{q}, \qquad x \in \Hmath .
\]
In the sequel we will often omit the subscript ${\Hmath }$ and write $|\cdot |:= \nnorm{\cdot }{{\Hmath }}{}$. The Fr\'{e}chet derivative of $f$ is given by
\begin{align*}
{f}^{\prime}(x) (h) \; & = \; d_x f (h) \; = \; q \cdot \nnorm{{x}}{}{q-2} \cdot \dual{x}{h}{\Hmath } , \quad h \in \Hmath . 
\end{align*}
By the It\^{o} formula, see \cite{Gyongy+Wu'21}, we have for every $t\in [0,T]$:
\begin{equation*}
\begin{split}
&\nnorm{{\Xn } (t )}{}{q} -  \nnorm{\Pn {\X }_{0}}{}{q}   
\; = \; \int_{0}^{t} q \, \nnorm{\Xn (s)}{}{q-2} \dual{\Xn  (s)}{-\ARiesz \Xn  (s)
- {\MHD }_{n} (\Xn (s)) -{\tHall }_{n}( \Xn (s)) }{} \, ds \\
&+ \int_{0}^{t} \int_{Y}  \Big\{ 
\nnorm{ \Xn ({s}^{-})  +  F_n(s,\Xn ({s}^{-});y)}{\Hmath }{q} 
-\nnorm{\Xn ({s}^{-})}{\Hmath }{q}   \\
&\qquad \qquad \quad  -q \, \nnorm{\Xn ({s}^{-})}{\Hmath }{q-2} 
 \ilsk{\Xn ({s}^{-})}{ F_n(s,\Xn ({s}^{-});y)}{\Hmath } \Bigr\} \, \eta (ds, dy)   \\
& +\int_{0}^{t} \int_{Y} 
q \, \nnorm{\Xn ({s}^{-})}{\Hmath }{q-2} \ilsk{\Xn ({s}^{-})}{F_n(s,\Xn ({s}^{-});y)}{\Hmath} \, \tilde{\eta}(ds,dy)  .
\end{split}
\end{equation*}
Note that by \eqref{eq:Acal_Hn_Riesz} and \eqref{eq:A_acal_rel} we have  
\[
\ilsk{\ARiesz \Xn}{\Xn }{} \; = \;  \dual{\mathcal{A} \Xn }{\Xn }{} \; = \; \norm{\Xn }{}{2}.
\]
By  \eqref{eq:Bn_Hn_Riesz}, \eqref{eq:MHD-map_perp}, \eqref{eq:R_Hn_Riesz}  and \eqref{eq:tHall-map_perp} we infer that
\[
\ilsk{{\MHD }_{n}(\Xn )}{\Xn }{} = \dual{\MHD (\Xn )}{\Xn }{} =0
\quad \mbox{ and } \quad 
\ilsk{{\tHall }_{n}(\Xn )}{\Xn }{} = \dual{\tHall (\Xn )}{\Xn }{} =0.
\]
Thus 
for every $t\in [0,T]$
\begin{equation}
\begin{split}
& \nnorm{{\Xn } (t)}{}{q } +q\, \int_{0}^{t} \nnorm{\Xn (s)}{}{q-2} \norm{\Xn (s)}{}{2} \, ds \\ 
&= \; \nnorm{\Pn {\X }_{0}}{}{q } 
+ \int_{0}^{t} \int_{Y}  \Big\{ 
\nnorm{ \Xn ({s}^{})  +  F_n(s,\Xn ({s}^{});y)}{\Hmath }{q} 
-\nnorm{\Xn ({s}^{})}{\Hmath }{q}   \\
&\qquad \qquad \qquad \qquad \qquad  -q \, \nnorm{\Xn ({s}^{})}{\Hmath }{q-2} 
 \ilsk{\Xn ({s}^{})}{F_n(s,\Xn ({s}^{});y)}{\Hmath } \Bigr\} \, \eta (ds, dy)   \\
& \qquad  +\int_{0}^{t} \int_{Y} 
q \, \nnorm{\Xn ({s}^{-})}{\Hmath }{q-2} \ilsk{\Xn ({s}^{-})}{F_n(s,\Xn ({s}^{-});y)}{\Hmath} \, \tilde{\eta}(ds,dy) 
.
\end{split} \label{ineq-01_truncated}
\end{equation} 

\medskip  \noindent
For any $R>0 $ let us define the stopping time
\begin{equation}
\taunR \; := \;  \inf \{ t \in [0,T]: \nnorm{\Xn (t)}{\Hmath }{} > R \} \wedge T.
\end{equation}
Since $\{ \Xn (t) , \, t \in [0,T]\} $ is an $\Hmath $-valued $\Fmath $-adapted and right-continuous process, $\taunR $ is a stopping time.

\medskip
\noindent
Let us fix $t\in [0,T]$. 
From \eqref{ineq-01_truncated}, we infer that 
\begin{equation}
\begin{split}
&\sup_{s\in [0,t\wedge \taunR ]} \nnorm{{\Xn } (s)}{}{q } 
+ \sup_{s\in [0,t\wedge \taunR ]} q\, \int_{0}^{s} \nnorm{\Xn (r)}{}{q-2} \norm{\Xn (r)}{}{2} \, dr 
\\ 
\;  &\le  \; \nnorm{ {\X }_{0}}{}{q } 
+ \sup_{s\in [0,t\wedge \taunR ]} \biggl| \int_{0}^{s} \int_{Y}  \Big\{ 
\nnorm{ \Xn ({r}^{})  + F_n(r,\Xn ({r}^{});y)}{\Hmath }{q} 
-\nnorm{\Xn ({r}^{})}{\Hmath }{q}   \\
&\qquad \qquad \qquad \qquad \qquad  -q \, \nnorm{\Xn ({r}^{})}{\Hmath }{q-2} 
 \ilsk{\Xn ({r}^{})}{F_n(r,\Xn ({r}^{});y)}{\Hmath } \Bigr\} \, \eta (dr, dy)   \\
& \qquad  +\int_{0}^{s} \int_{Y} 
q \, \nnorm{\Xn ({r}^{-})}{\Hmath }{q-2} \ilsk{\Xn ({r}^{-})}{\Pn F(r,\Xn ({r}^{-});y)}{\Hmath} \, \tilde{\eta}(dr,dy) \biggr|
.
\end{split} 
\label{eq:Millet_aux}
\end{equation} 

\medskip  
\noindent
Let us denote
\begin{equation}
\begin{split}
{\mathcal{U} }_{n,R}(t) \; &:= \; \sup_{s \in [0,t\wedge \taunR ]} \nnorm{\Xn (s)}{\Hmath }{q}
\\
{\vcal }_{n,R}(t) \; &:= \; \sup_{s \in [0,t\wedge \taunR ]} q\, \int_{0}^{s} \nnorm{\Xn (r)}{}{q-2} \norm{\Xn (r)}{}{2} \, dr
\; = \;  q\, \int_{0}^{t\wedge \taunR} \nnorm{\Xn (r)}{}{q-2} \norm{\Xn (r)}{}{2} \, dr 
\\
{\mathcal{I} }_{n,R}(t) \; &:= \; \sup_{s \in [0,t\wedge \taunR ]} \biggl|  
\int_{0}^{s} \int_{Y}  \Big\{ 
\nnorm{ \Xn ({r}^{})  + F_n(r,\Xn ({r}^{});y)}{\Hmath }{q} 
-\nnorm{\Xn ({r}^{})}{\Hmath }{q}   \\
&\qquad \qquad \qquad \qquad \qquad  -q \, \nnorm{\Xn ({r}^{})}{\Hmath }{q-2} 
 \ilsk{\Xn ({r}^{})}{F_n(r,\Xn ({r}^{});y)}{\Hmath } \Bigr\} \, \eta (dr, dy)   \\
& \qquad \qquad \qquad  +\int_{0}^{s} \int_{Y} 
q \, \nnorm{\Xn ({r}^{-})}{\Hmath }{q-2} \ilsk{\Xn ({r}^{-})}{F_n (r,\Xn ({r}^{-});y)}{\Hmath} \, \tilde{\eta}(dr,dy) ,
\biggr| 
\end{split}
\label{eq:app_Gronwall_1}
\end{equation}
where $t\in [0,T]$.

\medskip  \noindent
Using the notations introduced in \eqref{eq:app_Gronwall_1} by \eqref{eq:Millet_aux}
we have for every $t\in [0,T]$
\begin{equation}
{\mathcal{U} }_{n,R}(t) + {\vcal }_{n,R}(t) \; \le \; \nnorm{ {\X }_{0}}{}{q } + {\mathcal{I} }_{n,R}(t).
\end{equation}

\medskip  \noindent
We will estimate the term ${\mathcal{I} }_{n,R}(t)$.
To this end, let 
\begin{equation}
\begin{split}
{\mathcal{I} }_{n}(t) \; :=& \;  
\int_{0}^{t} \int_{Y}  \Big\{ 
\nnorm{ \Xn ({r}^{})  + F_n(r,\Xn ({r}^{});y)}{\Hmath }{q} 
-\nnorm{\Xn ({r}^{})}{\Hmath }{q}   \\
&\qquad \qquad    -q \, \nnorm{\Xn ({r}^{})}{\Hmath }{q-2} 
 \ilsk{\Xn ({r}^{})}{F_n(r,\Xn ({r}^{});y)}{\Hmath } \Bigr\} \, \eta (dr, dy)   \\
& +\int_{0}^{t} \int_{Y} 
q \, \nnorm{\Xn ({r}^{-})}{\Hmath }{q-2} \ilsk{\Xn ({r}^{-})}{F_n(r,\Xn ({r}^{-});y)}{\Hmath} \, \tilde{\eta}(dr,dy) 
\end{split}
\label{eq:I_n}
\end{equation}
We decompose ${\mathcal{I} }_{n}(t) $  into two terms
\begin{equation}
{\mathcal{I} }_{n}(t) \; = \; {\mathcal{J} }_{n}(t) + {\mathcal{M} }_{n}(t), \qquad t \in [0,T], 
\label{eq:I_n_decomp}
\end{equation}
where
\begin{equation}
\begin{split}
{\mathcal{J} }_{n}(t) \; &:= \; \int_{0}^{t} \int_{Y}  \big\{ 
\nnorm{\Xn ({r}^{}) + F_n(r,\Xn ({r}^{});y)}{\Hmath }{q} -\nnorm{\Xn ({r}^{})}{\Hmath }{q}  \\
&  \qquad \qquad \quad - q\, \nnorm{\Xn ({r}^{})}{\Hmath }{q-2} 
\ilsk{\Xn ({r}^{})}{F_n(r,\Xn ({r}^{});y)}{\Hmath } \bigr\} \, \eta (dr, dy) , \quad t\in [0,T] ,  
\label{eq:J_n}
\end{split}
\end{equation}
and 
\begin{equation}
{\mathcal{M} }_{n}(t) \; := \; \int_{0}^{t } \int_{Y} 
q \, \nnorm{\Xn ({r}^{-})}{\Hmath }{q-2} \ilsk{\Xn ({r}^{-})}{F_n(r,\Xn ({r}^{-});y)}{\Hmath} \, \tilde{\eta}(dr,dy)  ,
\quad t \in [0,T].
\label{eq:Mcal_n}
\end{equation}
We will estimate separately the terms ${\mathcal{J} }_{n} $ and ${\mathcal{M} }_{n}$.

\medskip  \noindent
Let us consider first the term ${\mathcal{J} }_{n} $ defined by \eqref{eq:J_n}. 
From the Taylor formula, it follows that for every $q\ge 2 $ there exists a positive constant ${c}_{q}>0$ such that for all $x,h \in \Hmath $ the following inequality holds
\begin{equation}
\bigl| \nnorm{x+h}{\Hmath }{q} -\nnorm{x}{\Hmath }{q}- q \nnorm{x}{\Hmath }{q-2} \ilsk{x}{h}{\Hmath } \bigr| 
\; \le \; {c}_{q} (\nnorm{x}{\Hmath }{q-2} +\nnorm{h}{\Hmath }{q-2} ) \, \nnorm{h}{\Hmath }{2} .  
 \label{eq:app_Taylor_I}
\end{equation}
By \eqref{eq:app_Taylor_I},
the fact that $\Pn :\Hmath \to \Hn$ is the $\ilsk{\cdot }{\cdot }{\Hmath }$-projection, 
and inequality \eqref{eq:F_linear_growth}  we obtain the following inequalities
\begin{equation}
\begin{split}
&\e \Bigl[ \sup_{s\in [0,t\wedge \taunR ]} |{\mathcal{J} }_{n}(s)| \Bigr] 
\\  
\; &\le \; \e \biggl[ \sup_{s\in [0,t\wedge \taunR ]}  \int_{0}^{s} \int_{Y} \Bigl|  
\nnorm{\Xn ({r}^{}) + F_n(r,\Xn ({r}^{});y)}{\Hmath }{q} -\nnorm{\Xn ({r}^{})}{\Hmath }{q}  \\
&  \qquad \qquad \qquad - q\, \nnorm{\Xn ({r}^{})}{\Hmath }{q-2} 
\ilsk{\Xn ({r}^{})}{F_n(r,\Xn ({r}^{});y)}{\Hmath }  \Bigr| \, \eta (dr, dy) 
 \biggr] 
 \\ 
\; &\le \; \e \biggl[  \int_{0}^{t\wedge \taunR} \int_{Y} \Bigl|  
\nnorm{\Xn ({r}^{}) + F_n(r,\Xn ({r}^{});y)}{\Hmath }{q} -\nnorm{\Xn ({r}^{})}{\Hmath }{q}  \\
&  \qquad \qquad \qquad - q\, \nnorm{\Xn ({r}^{})}{\Hmath }{q-2} 
\ilsk{\Xn ({r}^{})}{F_n(r,\Xn ({r}^{});y)}{\Hmath }  \Bigr| \, \eta (dr, dy) 
 \biggr]
  \\ 
\; &= \; \e \biggl[  \int_{0}^{t\wedge \taunR} \int_{Y} \Bigl|  
\nnorm{\Xn ({r}^{}) + F_n(r,\Xn ({r}^{});y)}{\Hmath }{q} -\nnorm{\Xn ({r}^{})}{\Hmath }{q}  \\
&  \qquad \qquad \qquad - q\, \nnorm{\Xn ({r}^{})}{\Hmath }{q-2} 
\ilsk{\Xn ({r}^{})}{F_n(r,\Xn ({r}^{});y)}{\Hmath }  \Bigr| \, \mu (dy) dr 
 \biggr]
\\
\; &\le \; {c}_{q} \e \biggl[ \int_{0}^{t\wedge \taunR } \int_{Y}  
\nnorm{F_n(s,\Xn ({s}^{});y)}{\Hmath }{2}
\Bigl\{ \nnorm{ \Xn ({s}^{})}{\Hmath }{q-2} + \nnorm{F_n(s,\Xn ({s}^{});y)}{\Hmath }{q-2} \Bigr\}   \, \mu (dy) ds   \biggr]
\\
\; &\le \;  {c}_{q} \e \biggl[  \int_{0}^{t\wedge \taunR} \bigl\{ 
{K}_{2} \, \nnorm{\Xn ({s}^{})}{\Hmath }{q-2}  \bigl( 1+ \nnorm{\Xn ({s}^{})}{\Hmath }{2}\bigr) 
+ {K}_{q}\bigl( 1+ \nnorm{\Xn ({s}^{})}{\Hmath }{q}\bigr)   \bigr\} \, ds  \biggr]
\\
\; &\le \; {\tilde{c}}_{q} \e \biggl[  \int_{0}^{t\wedge \taunR} \bigl\{ 1+ \nnorm{\Xn ({s}^{})}{\Hmath }{q} \bigr\} \, ds  \biggr]
\; \le \; {\tilde{c}}_{q}t 
+ {\tilde{c}}_{q} \, \e \biggl[  \int_{0}^{t\wedge \taunR} \nnorm{\Xn ({s}^{})}{\Hmath }{q} \, ds \biggr]  , \qquad t \in [0,T] ,
\end{split}
\label{eq:J_n_est}
\end{equation}
where  ${\tilde{c}}_{q}>0$ is a certain constant.

\medskip  \noindent
Let us move to the term ${\mathcal{M} }_{n}$ given by \eqref{eq:Mcal_n}.
By  \eqref{eq:F_linear_growth}, the fact that $\Pn $ is the orthogonal projection in ${\Hmath }$ 
and \eqref{eq:isometry},
we infer that the process ${({\mathcal{M} }_{n}(t\wedge \taunR) )}_{t\in [0,T]}$
is a square integrable martingale.  Indeed, this is a consequence of the following estimates.
By \eqref{eq:isometry}, \eqref{eq:F_linear_growth} and the fact that $\Pn $ is the orthogonal projection in ${\Hmath }$ we infer that for every $t\in [0,T]$, 
\[
\begin{split}
& \e \Bigl[ \int_{0}^{t\wedge \taunR } 
 \int_{Y} \bigl|  q \, \nnorm{\Xn ({r}^{})}{\Hmath }{q-2} \ilsk{\Xn ({r}^{})}{F_n(r,\Xn ({r}^{});y)}{\Hmath}   {\bigr| }^{2}\, \mu (dy) dr  \Bigr]
 \\
\; &\le \; {q }^{2} \, \e \Bigl[ \int_{0}^{t\wedge \taunR } 
\int_{Y}  \nnorm{\Xn ({r}^{})}{\Hmath }{2(q-2)}
\, \nnorm{\Xn ({r}^{})}{\Hmath }{2} \, \nnorm{F_n(r,\Xn ({r}^{});y)}{\Hmath }{2}
\, \mu (dy) dr  \Bigr]
\\
\; &= \; {q }^{2} \, \e \Bigl[ \int_{0}^{t\wedge \taunR } 
 \nnorm{\Xn ({r}^{})}{\Hmath }{2q-2}
 \, \Bigl\{  \int_{Y} \nnorm{F_n(r,\Xn ({r}^{});y)}{\Hmath }{2}
\, \mu (dy)  \Bigr\}  dr  \Bigr]
\\
\; &\le \; {q }^{2} \, \e \Bigl[ \int_{0}^{t\wedge \taunR } 
 \nnorm{\Xn ({r}^{})}{\Hmath }{2q-2} 
{K}_{2}\bigl( 1+\nnorm{\Xn ({r}^{})}{\Hmath }{2} \bigr)
\, dr  \Bigr]
\; \le \; {q }^{2} {K}_{2} T \, {R}^{2q-2} \bigl( 1+{R}^{2} \bigr)
\; < \; \infty .
\end{split}
\]
By the maximal inequality we infer that there exists a positive constant ${\tilde{K}}_{2}$ such that
\begin{equation*}
\begin{split}
& \e \Bigl[ \sup_{s\in [0,t\wedge \taunR ]} |{\mathcal{M} }_{n}(s)| \Bigr] 
\\
\; &= \; \e \Bigl[ \sup_{s\in [0,t\wedge \taunR ]} \Bigl|
\int_{0}^{s} \int_{Y}  
q \, \nnorm{\Xn ({r}^{-})}{\Hmath }{q-2} \ilsk{\Xn ({r}^{-})}{F_n(r,\Xn ({r}^{-});y)}{\Hmath} \, \tilde{\eta}(dr,dy) 
\Bigr|  \Bigr]
\\
\; &\le  \; {\tilde{K}}_{2} q{K}_{2} \, \e \biggl[ \biggl( \int_{0}^{t\wedge \taunR} 
\nnorm{\Xn ({r}^{})}{\Hmath }{2q-2}  \, (1+ \nnorm{\Xn ({r}^{})}{\Hmath }{2}) 
dr   {\biggr) }^{\frac{1}{2}} \biggr]
\\
\; &\le  \; {\tilde{K}}_{2} q{K}_{2} \, \e \biggl[ 
\biggl( \sup_{r \in [0,\taunR ]} \nnorm{\Xn ({r}^{})}{\Hmath }{q} {\biggr) }^{\frac{1}{2}} 
\biggl( \int_{0}^{t\wedge \taunR} 
\nnorm{\Xn ({r}^{})}{\Hmath }{q-2}  \, (1+ \nnorm{\Xn ({r}^{})}{\Hmath }{2}) 
dr   {\biggr) }^{\frac{1}{2}} \biggr] .
\end{split}
\end{equation*}
Using moreover the Young inequality (for numbers) we infer that for every $\eps > 0$ there exists ${C}_{\eps } > 0 $ such that
\begin{equation}
\begin{split}
& \e \Bigl[ \sup_{s\in [0,t\wedge \taunR ]} |{\mathcal{M} }_{n}(s)| \Bigr] 
\;  \le \; \eps \, \e \Bigl[ \sup_{r \in [0,\taunR ]} \nnorm{\Xn ({r}^{})}{\Hmath }{q} \Bigr]
+ {C}_{\eps } \, \e \biggl[ \int_{0}^{t\wedge \taunR} 
\nnorm{\Xn ({r}^{})}{\Hmath }{q-2}  \, (1+ \nnorm{\Xn ({r}^{})}{\Hmath }{2}) 
dr    \biggr] 
\\
\; & \le \; \eps \, \e \Bigl[ \sup_{r \in [0,\taunR ]} \nnorm{\Xn ({r}^{})}{\Hmath }{q} \Bigr]
+ {C}_{\eps } \, \e \biggl[ \int_{0}^{t\wedge \taunR} 
\Bigl\{ \frac{2}{q} + \Bigl( 1-\frac{1}{q}\Bigr) \nnorm{\Xn ({r}^{})}{\Hmath }{q}) \Bigr\} 
dr    \biggr] 
\\
\; & \le \; \eps \, \e \Bigl[ \sup_{r \in [0,\taunR ]} \nnorm{\Xn ({r}^{})}{\Hmath }{q} \Bigr]
+\frac{2}{q} {C}_{\eps } \,t 
+ 2{C}_{\eps }\Bigl( 1-\frac{1}{q}\Bigr) \, \e \biggl[ \int_{0}^{t\wedge \taunR} 
\nnorm{\Xn ({r}^{})}{\Hmath }{q})  dr    \biggr] .
\end{split}
\label{eq:Mcal_n_est}
\end{equation}

\medskip
\noindent
Now we are in a position to estimate the term ${\mathcal{I} }_{n,R}$ defined in \eqref{eq:app_Gronwall_1}.
By \eqref{eq:I_n_decomp}, \eqref{eq:J_n_est} and \eqref{eq:Mcal_n_est} we obtain for every $t\in [0,T]$
\begin{equation}
\begin{split}
&\e \bigl[ {\mathcal{I} }_{n,R}(t)\bigr]  
\; \le \; \e \Bigl[ \sup_{s\in [0,t\wedge \taunR ]} |{\mathcal{J} }_{n}(s)| \Bigr] 
+ \e \Bigl[ \sup_{s\in [0,t\wedge \taunR ]} |{\mathcal{M} }_{n}(s)| \Bigr] 
\\
\; &\le \; \eps \, \e \Bigl[ \sup_{r \in [0,\taunR ]} \nnorm{\Xn ({r}^{})}{\Hmath }{q} \Bigr]
+{K}_{1}(q,\eps ,T) + {K}_{2}(q,\eps )   \, \e \biggl[ \int_{0}^{t\wedge \taunR} 
\nnorm{\Xn ({r}^{})}{\Hmath }{q})  dr    \biggr] , 
\end{split}
\label{eq:I_n(t)_est}
\end{equation}
where  ${K}_{1}(q,\eps ,T)$ and ${K}_{2}(q,\eps )$ are some positive constants.
Note that $\e [ \int_{0}^{t\wedge \taunR} 
\nnorm{\Xn ({r}^{})}{\Hmath }{q})  dr    ]  
\le \e [ \int_{0}^{t} 
\sup_{\sigma \in [0,r\wedge \taunR ]}\nnorm{\Xn ({\sigma }^{})}{\Hmath }{q})  dr  ]  $.
Using the notation \eqref{eq:app_Gronwall_1},
by \eqref{eq:I_n(t)_est} we have
\begin{equation}
\begin{split}
&\e \bigl[ {\mathcal{I} }_{n,R}(t)\bigr]  
\; \le \; \eps \, \e \bigl[ {\mathcal{U} }_{n,R} (t) \bigr]
+{K}_{1}(q,\eps ,T) + {K}_{2}(q,\eps ) \,  \int_{0}^{t} \e \bigl[{\mathcal{U} }_{n,R} (r) \bigr]  \, dr    
, \quad t \in [0,T] .
\end{split}
\label{eq:app_Gronwall_2}
\end{equation}

\medskip  \noindent
Let us choose $\eps \in (0,\frac{1}{2}]$.
By \eqref{eq:app_Gronwall_1}, \eqref{eq:app_Gronwall_2} and  Lemma \ref{lem:A.1_Chueshov+Millet'10} we obtain
\begin{equation}
\e \bigl[ {\mathcal{U} }_{n,R}(t)+ {\vcal }_{n,R}(t)\bigr]  
\; \le \; 2 {e}^{2t {K}_{2}(\eps ,q)} \, \Bigl( \e [\nnorm{{\X }_{0}}{\Hmath }{q}] + {K}_{1}(\eps ,q,T) \Bigr) , \qquad t \in [0,T].
\end{equation}
Thus there exists a constant $C(\eps ,q,T ) >0 $ such that for every $t\in [0,T]$
\begin{equation}
\e \bigl[ {\mathcal{U} }_{n,R}(t)+ {\vcal }_{n,R}(t)\bigr]  
\; \le \; C(\eps ,q,T ) \,  \Bigl( \e [\nnorm{{\X }_{0}}{\Hmath }{q}] + 1 \Bigr) ,
\end{equation}
or explicitly
\begin{equation}
\e \biggl[ \sup_{s\in [0,t\wedge \taunR ]} \nnorm{\Xn (s)}{\Hmath }{q}
+ q \, \int_{0}^{t\wedge \taunR}\nnorm{\Xn (s)}{\Hmath }{q-2} \norm{\Xn (s)}{}{2} \, ds  \biggr]  
\; \le \;  C(\eps ,q,T ) \,  \Bigl( \e [\nnorm{{\X }_{0}}{\Hmath }{q}] + 1 \Bigr) .
\end{equation}
Recall that  $\taunR \uparrow T$ as $R\to \infty $, $\p $-a.s. and $\p \{ \taunR <T\} =0 $.
Using the Fatou lemma we infer that
\begin{equation}
\e \biggl[ \sup_{s\in [0,T]} \nnorm{\Xn (s)}{\Hmath }{q}
+ q \, \int_{0}^{T}\nnorm{\Xn (s)}{\Hmath }{q-2} \norm{\Xn (s)}{}{2} \, ds  \biggr]  
\; \le \;  C(\eps ,q,T ) \,  \bigl( \e [\nnorm{{\X }_{0}}{\Hmath }{q}] + 1 \bigr) .
\end{equation}
From the above inequality we obtain estimates \eqref{eq:H_estimate_truncated_p}, \eqref{eq:HV_estimate_truncated} and \eqref{eq:V_estimate_truncated}.  
This concludes the proof of Lemma \ref{lem:Hall-MHD_truncated_estimates}.
\end{proof}

\medskip
\section{Compactness and tightness results.} \label{sec:comp-tight}

\medskip
\subsection{The space $\Umath $} \label{sec:aux_funct.anal}

\medskip  \noindent
It is well known that in the case when the domain is ${\mathbb{R} }^{3}$, thus unbounded, the standard Sobolev embedding are not compact. To overcome this problem we introduce auxiliary space $\Umath $. 
In the functional setting of the Hall-MHD equations we have the following three basic spaces
\[
\Vtest \; \subset \; \Vmath \; \subset \; \Hmath  ,
\]
see Section \ref{sec:Hall-MHD_funct-setting} and Definition \ref{def:mart-sol}. 
For fixed $m > \frac{5}{2}$,  let us consider the space
\begin{equation}
\Vast \; := \; {\Vmath }_{m}, 
\label{eq:Vast}
\end{equation}
where ${\Vmath }_{m}  $ is defined by \eqref{eq:Vmath_m}.
The choice of the space $\Vast $ corresponds to the properties of nonlinear maps $\MHD $ and $\tHall $, 
see Lemmas \ref{lem:MHD-term_properties}(iii) and \ref{lem:tHall-term_properties}(iii) and Corollaries 
\ref{cor:MHD-map_conv-aux} and \ref{cor:tHall-term_conv_general}.

\medskip  \noindent
Since $\Vast $ is dense in $\Hmath $ and the embedding $\Vast \hookrightarrow \Hmath $ is continuous, 
by Lemma 2.5 from \cite{Holly+Wiciak'1995} (see \cite[Lemma C.1]{Brze+EM'13})
there exists a separable Hilbert space $\Umath $  such that 
$\Umath \subset \Vast $, $\Umath $ is dense in $\Vast $ and  
\begin{equation}
\mbox{the embedding  ${\iota }_{} : \Umath \hookrightarrow \Vast   $ is compact.}
\label{eq:Umath}
\end{equation}
Then we have
\begin{equation}
\Umath \; \hookrightarrow  \; \Vast \; \; \hookrightarrow  \; \Vtest \;
\hookrightarrow \; \Vmath \; \hookrightarrow \; \Hmath .
\label{eq:spaces}
\end{equation}

\medskip
\subsection{The space $\zcal $}

\medskip  \noindent
In this section we define the space $\zcal $ which plays important role in our approach.
By \eqref{eq:Umath} and  \eqref{eq:spaces}, in particular,
we have
\begin{equation}
\Umath  \hookrightarrow \; \Vmath \; \hookrightarrow \; \Hmath 
\; \cong \; {\Hmath }^{\prime } \; \hookrightarrow \; {\Umath }^{\prime },
\end{equation}
the embedding  $ \Umath \hookrightarrow \Vmath   $ being compact.
To define the space $\zcal $ we will need the following four functional spaces
being the counterparts in our framework of the spaces used in \cite{EM'13}, see also \cite{Metivier'88}:
\begin{itemize}
\item $\Dmath ([0,T],{\Umath }^{\prime }) $ := the space of  c\`{a}dl\`{a}g functions 
 $ \phi :[0,T] \to {\Umath }^{\prime } $  with the topology  $ {\tcal }_{1}$
               induced by the Skorohod metric (see Appendix \ref{sec:Cadlag_functions}),
\item ${L}_{w}^{2}(0,T;\Vmath ) $ := the space ${L}^{2} (0,T;\Vmath )$ with the weak topology 
                     $ {\tcal}_{2} $,      
\item ${L}^{2}(0,T;{\Hmath }_{loc})$ := the space of measurable functions 
 $ \phi :[0,T]\to   \Hmath  $ such that for all $ R \in \mathbb{N} $
 \begin{equation*}                 
{p}_{T,R}(\phi):= \Bigl(  \int_{0}^{T} \int_{{\mathcal{O} }_{R}} \bigl[ {|{\phi }_{1} (t,x)|}^{2} +  {|{\phi }_{2} (t,x)|}^{2} \bigr] \, dxdt {\Bigr) }^{\frac{1}{2}}
<\infty , 
\end{equation*}   
where $\phi  = ({\phi }_{1},{\phi }_{2})$,   
with the topology  $ {\tcal }_{3}$  generated by the seminorms 
$({p}_{T,R}{)}_{R\in \mathbb{N} } .$     
\end{itemize}
Let ${\Hmath }_{w}$ denotes the Hilbert space $\Hmath $ endowed with the weak topology. 
Let us consider the fourth space, see \cite{EM'13},
\begin{itemize} 
\item $\Dmath ([0,T];{\Hmath }_{w}) $ : = the space of all function  $\phi :[0,T]\to \Hmath $ such that
for every $h\in \Hmath $ 
\[
[0,T] \; \ni \; t \; \mapsto \; \ilsk{\phi (t)}{h}{\Hmath } \in \; \mathbb{R} \;  
\] 
is a real-valued c\`{a}dl\`{a}g function. In the space $\Dmath ([0,T];{\Hmath }_{w})$ we consider the weakest topology ${\tcal }_{4}$ such that for all $h \in \Hmath  $   the  maps 
\begin{equation}
\Dmath ([0,T];{\Hmath }_{w}) \; \ni \; \phi   \; \mapsto \;  \ilsk{\phi (\cdot )}{h}{\Hmath } \; \in \; \Dmath  ([0,T];\mathbb{R} ) 
\label{eq:D([0,T];H_w)_cadlag}  
\end{equation} 
are continuous.                     
In particular,  
${\phi }_{n} \to \phi  $ in $\Dmath ([0,T];{\Hmath }_{w}) $ iff  for all $ h \in \Hmath  $:
$
\ilsk{{\phi }_{n} (\cdot )}{h}{\Hmath }  \to \ilsk{\phi (\cdot )}{h}{\Hmath }   \mbox{ in the space }  
  \Dmath ([0,T];\mathbb{R} ).
$ 
\end{itemize}

\medskip  
\noindent
Let us consider the ball
\[
\ball \; := \; \{ x \in \Hmath  : \, \, \, \nnorm{x}{\Hmath }{} \le r \} .
\]
Let ${\ball }_{w}$ denote the ball $\ball $ endowed with the weak topology.
It is well-known that  ${\ball }_{w}$ is metrizable, see \cite{Brezis'2011}. 
Let ${q}_{r}$ denote the metric compatible with the weak topology on $\ball $.
Let us denote by 
 $\Dmath ([0,T]; {\ball }_{w}) $    the space of  functions  
 $\phi \in \Dmath ([0,T];{\Hmath }_{w})$  such that 
\begin{equation}  
\sup_{t \in [0,T]} \nnorm{\phi (t)}{\Hmath }{} \; \le \;  r  .  \label{eq:D([0,T];B_w)} 
\end{equation}          
The space $\Dmath ([0,T]; {\ball }_{w})$ is  completely metrizable, as well.
In fact, $\Dmath ([0,T]; {\ball }_{w})$ is metrizable with 
\begin{equation} \label{eq:metric_D([0,T];B_w)}
{\delta }_{T,r}(\phi ,v) \; = \; \inf_{\lambda \in {\Lambda }_{T}} \! \biggl\{ \sup_{t\in [0,T]} \! {q}_{r}(\phi (t),v\circ \lambda (t))\! + \! \sup_{t \in [0,T]} \! |t-\lambda (t)|  + \sup_{s \ne t } \! \Bigl| \log \frac{\lambda (t)-\lambda (s)}{t-s} \Bigr|  \biggr\} .
\end{equation}
Since by the Banach-Alaoglu Theorem  ${\ball }_{w}$ is compact, $(\Dmath ([0,T];{\ball }_{w}),{\delta }_{T,r} )$ is a complete metric space. 

\medskip
\begin{definition}\label{def:space_Z_cadlag}
\rm Let us put
\begin{equation}
\label{eq:Z_cadlag}
\mathcal{Z} \; := \; {L}_{w}^{2}(0,T;\Vmath )  
\cap {L}^{2}(0,T;{\Hmath }_{loc})  \cap \Dmath ([0,T];{\Hmath }_{w}) \cap \Dmath ([0,T]; {\Umath }^{\prime }) 
\end{equation}
and let  $\mathcal{T} $ be   the supremum of the corresponding four topologies, i.e. the smallest topology on $\mathcal{Z}$ such that the four natural embeddings from $\mathcal{Z}$ are continuous.
The space  $\mathcal{Z}$  will  also be considered with the Borel $\sigma $-field, denoted by $\sigma (\mathcal{Z})$, i.e. the smallest $\sigma $-field containing the family $\mathcal{T} $.
\end{definition}

\medskip
\subsection{Deterministic compactness  theorem} 

\medskip  
\noindent
The following lemma says that any bounded sequence $({\phi }_{n}) \subset {L}^{\infty } (0,T;\Hmath )$  convergent in 
$\Dmath ([0,T];{\Umath }^{\prime })$ is  convergent in the space $\Dmath ([0,T];{\ball }_{w})$, as well.
It is closely related to the lemma due to Strauss, see \cite{Strauss'66}, that says:
\begin{equation} \label{eq:Strauss}
{L}^{\infty }(0,T;\Hmath ) \cap \mathcal{C} ([0,T]; {\Umath }_{w}')
\; \subset \; \mathcal{C} ([0,T];{\Hmath }_{w}),
\end{equation}
where $\mathcal{C} ([0,T];{\Umath }_{w}')$ and $\mathcal{C} ([0,T];{\Hmath }_{w})$ denote the space of $\Umath '$ and $\Hmath $-valued, respectively,  weakly continuous functions.

\medskip
\begin{lemma} \label{lem:D(0,T,{hmath }_{w})_conv}
(See \cite[Lemma 2]{EM'13}.) \it
Let ${\phi }_{n}:[0,T] \to \Hmath  $, $n \in \mathbb{N} $, be functions such that
\begin{description}
\item[(i) ] $\sup_{n \in \mathbb{N} } \sup_{s \in [0,T]} \nnorm{{\phi }_{n} (s)}{\Hmath }{} \le r  $,
\item[(ii) ] ${\phi }_{n} \to \phi  $ in $\Dmath ([0,T];{\Umath }^{\prime })$.
\end{description}
Then  $\phi , {\phi }_{n} \in \Dmath ([0,T];{\ball }_{w}) $ and ${\phi }_{n} \to \phi $ in $\Dmath ([0,T];{\ball }_{w})$ as $n \to \infty $.
\end{lemma}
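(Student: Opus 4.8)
The plan is to reduce everything to one structural fact: the embedding $\Hmath \hookrightarrow \Uprime$ is \emph{compact}. Indeed, by \eqref{eq:Umath} the embedding $\Umath \hookrightarrow \Vast$ is compact, and $\Vast \hookrightarrow \Hmath$ is continuous by \eqref{eq:spaces}, so $\Umath \hookrightarrow \Hmath$ is compact; dualizing (the adjoint of a compact operator is compact) gives that $\Hmath \cong \Hprime \hookrightarrow \Uprime$ is compact. The decisive consequence is the standard fact that, on a bounded set, the weak topology of the larger space agrees with the norm topology of the smaller one. Concretely, I would show that on the ball $\ball$ the weak topology of $\Hmath$ coincides with the topology induced by the $\Uprime$-norm: one direction is clear from continuity of the embedding, while for the other, if $x_n \in \ball$ with $\nnorm{x_n-x}{\Uprime}{}\to 0$, then Banach--Alaoglu extracts a weakly convergent subsequence in $\Hmath$, whose limit must coincide with $x$ (weak convergence in $\Hmath$ implies convergence in $\Uprime$, and limits agree because $\Hmath \hookrightarrow \Uprime$ is injective), so the whole sequence converges weakly to $x$. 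Hence the metric ${q}_{r}$ and the $\Uprime$-norm metric generate the same topology on $\ball$.

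With this in hand the membership claims are immediate. Each $\phi_n$ is $\ball$-valued by hypothesis (i) and is c\`{a}dl\`{a}g into $\Uprime$ as an element of $\Dmath([0,T];\Uprime)$; since on $\ball$ the $\Uprime$-norm and the weak $\Hmath$-topology coincide, $\phi_n$ is c\`{a}dl\`{a}g into ${\ball }_{w}$ too, so $\phi_n \in \Dmath([0,T];{\ball }_{w})$. For the limit I would observe that $\ball$, being weakly compact in $\Hmath$ and carrying on itself the $\Uprime$-norm topology, is compact and hence closed in $\Uprime$. Since every $\phi_n$ is $\ball$-valued and $\phi_n \to \phi$ in $\Dmath([0,T];\Uprime)$, each value $\phi(t)$ is a $\Uprime$-limit of points of $\ball$ (read off the time-changed convergence), so $\phi(t)\in\ball$; being c\`{a}dl\`{a}g into $\Uprime$ it is then c\`{a}dl\`{a}g into ${\ball }_{w}$, whence $\phi \in \Dmath([0,T];{\ball }_{w})$.

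It remains to transfer the convergence. Convergence in $\Dmath([0,T];\Uprime)$ supplies time changes $\lambda_n \in {\Lambda }_{T}$ with $\sup_{t}|t-\lambda_n(t)| + \sup_{s\ne t}\bigl|\log\tfrac{\lambda_n(t)-\lambda_n(s)}{t-s}\bigr| \to 0$ and $\sup_{t\in[0,T]}\nnorm{\phi_n(t)-\phi(\lambda_n(t))}{\Uprime}{}\to 0$. All the values $\phi_n(t)$ and $\phi(\lambda_n(t))$ lie in the compact set $\ball$, on which the identity map from the $\Uprime$-norm metric to ${q}_{r}$ is a continuous bijection between compact metric spaces, hence uniformly continuous. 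Therefore $\sup_{t\in[0,T]}{q}_{r}(\phi_n(t),\phi(\lambda_n(t)))\to 0$ as well, and feeding the same $\lambda_n$ into \eqref{eq:metric_D([0,T];B_w)} yields ${\delta }_{T,r}(\phi_n,\phi)\to 0$, i.e. $\phi_n\to\phi$ in $\Dmath([0,T];{\ball }_{w})$.

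The main obstacle is precisely this last upgrade from pointwise (after time change) weak convergence to \emph{uniform} weak convergence, since pointwise weak convergence carries no uniform modulus on its own. It is resolved entirely by the compactness of $\ball$ in the $\Uprime$-norm --- equivalently, by the compact embedding $\Hmath\hookrightarrow\Uprime$ --- which makes ${q}_{r}$ and the $\Uprime$-norm uniformly equivalent on $\ball$ and lets the single family of time changes produced by the $\Uprime$-convergence serve simultaneously for the ${\ball }_{w}$-convergence.
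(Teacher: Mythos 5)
Your argument is correct, and it is worth noting that the paper itself supplies no proof of this lemma: it is quoted from \cite[Lemma 2]{EM'13}, so there is no in-text proof to match against. Your route is genuinely different from the standard one in that reference. There, for each fixed $h\in \Hmath $ one proves $\sup_{t\in [0,T]}|\ilsk{{\phi }_{n}(t)-\phi (\lambda_n(t))}{h}{\Hmath }|\to 0$ by a density argument: approximate $h$ by $u\in \Umath $, bound the error term by $2r\,\nnorm{h-u}{\Hmath }{}$ using hypothesis (i), and control the main term by $\nnorm{{\phi }_{n}(t)-\phi (\lambda_n(t))}{\Uprime }{}\,\norm{u}{\Umath }{}$; this needs only the continuity of $\Umath \hookrightarrow \Hmath $ and the density of $\Umath $ in $\Hmath $, and then one sums over a dense family $(h_k)$ to get convergence in the metric ${q}_{r}$. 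You instead route everything through Schauder's theorem: $\Hmath \hookrightarrow \Uprime $ is compact, hence $\ball $ is $\Uprime $-norm compact, the identity $(\ball ,\nnorm{\cdot }{\Uprime }{})\to (\ball ,{q}_{r})$ is a continuous bijection of compact metric spaces and therefore a uniform homeomorphism, and the single family of time changes coming from the $\Dmath ([0,T];\Uprime )$-convergence transfers. This is a clean and correct alternative; it reduces the whole lemma to one structural fact, at the price of invoking compactness of the embedding where density alone would suffice.

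Two small points of precision. First, the clause ``one direction is clear from continuity of the embedding'' is misattributed: the implication (weak convergence in $\Hmath $ of a sequence in $\ball $) $\Rightarrow $ ($\Uprime $-norm convergence) is exactly the \emph{compactness} of $\Hmath \hookrightarrow \Uprime $, not its continuity. Since you establish that compactness in your opening sentence, nothing breaks, but the justification should say so. Second, the injectivity of $\Hmath \to \Uprime $, which you use to identify the weak limit with $x$ and to conclude that $\ball $ is closed in $\Uprime $, rests on the density of $\Umath $ in $\Hmath $; this holds by the construction of $\Umath $ in \eqref{eq:Umath} (it is dense in $\Vast $, which is dense in $\Hmath $), but it deserves an explicit word since it is the only reason the two topologies on $\ball $ can be compared at all.
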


\medskip  \noindent
The compactness criterion contained in the following Theorem \ref{th:Dubinsky_cadlag_unbound} can be seen as the generalization of the classical Dubinsky theorem, see \cite[Theorem IV.4.1]{Vishik+Fursikov'88},  to the case when the embedding $\Vmath \subset \Hmath $ is continuous and possibly not compact and the space of continuous functions is replaced by the space of c\`{a}dl\`{a}g functions $\Dmath ([0,T];\Umath ')$. 
Together with Lemma \ref{lem:D(0,T,{hmath }_{w})_conv}, it is a simple modification of Lemma 4.1 from  \cite{EM'14}. See also \cite[Theorem 2]{EM'13} for the case of the Navier-Stokes equations. 

\medskip  
\noindent
\begin{theorem} \rm  \label{th:Dubinsky_cadlag_unbound} \it
(See \cite[Lemma 4.1]{EM'14}.)
Let
\[
\mathcal{K} \; \subset \; {L}^{\infty }(0,T;\Hmath ) \, \cap \, {L}^{2}(0,T;\Vmath ) \, \cap \, \Dmath ([0,T];{\Umath }^{\prime })
\]
be a set satisfying the following three conditions 
\begin{description}
\item[(a) ] for all $\phi  \in \mathcal{K} $ and  all $t \in [0,T]$, $\phi (t) \in \Hmath   $ and 
$\, \, \sup_{\phi \in \mathcal{K} } \sup_{s \in[0,T]} \nnorm{\phi (s)}{\Hmath }{} < \infty  $, 
\item[(b) ] $ \sup_{\phi \in \mathcal{K} } \int_{0}^{T} \norm{\phi (s)}{\Vmath }{2} \, ds < \infty  $,
  i.e. $\mathcal{K} $ is bounded in ${L}^{2}(0,T;\Vmath )$,
\item[(c) ] $\lim{}_{\delta \to 0 } \sup_{\phi \in \mathcal{K} } {w}_{[0,T],{\Umath }^{\prime }}(\phi ;\delta ) =0 $.
\end{description}
Then $\mathcal{K} \subset {\zcal }_{}$  and $\mathcal{K} $ is $\tcal $-relatively compact in ${\zcal }_{}$ defined by \eqref{eq:Z_cadlag}.
\end{theorem}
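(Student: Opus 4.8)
The plan is to establish sequential relative compactness: given an arbitrary sequence $(\phi_n)\subset\mathcal{K}$, I extract a single subsequence converging, in each of the four topologies whose supremum is $\tcal$, to one common limit $\phi\in\zcal$. Since all four topologies are comparable with the $\Dmath([0,T];\Uprime)$-topology (the weakest of them), once a subsequence converges there the candidate limit is pinned down and every further extraction is forced to converge to the same $\phi$; this is what makes it legitimate to chain the four extractions. Before extracting, I would record that $\mathcal{K}\subset\zcal$: each $\phi\in\mathcal{K}$ lies in ${L}^{2}(0,T;\Vmath)$ by (b), hence in ${L}^{2}(0,T;{\Hmath }_{loc})$ since $\Vmath\hookrightarrow\Hmath$; and by (a) together with $\phi\in\Dmath([0,T];\Uprime)$ the Strauss-type inclusion of Lemma~\ref{lem:D(0,T,{hmath }_{w})_conv}, applied to the constant sequence, gives $\phi\in\Dmath([0,T];{\Hmath }_{w})$.

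For the three routine extractions I proceed as follows. \textbf{(Step 1, $\Dmath([0,T];\Uprime)$.)} The embedding $\Hmath\hookrightarrow\Uprime$ is compact, being the adjoint of the compact embedding $\Umath\hookrightarrow\Hmath$ that follows from \eqref{eq:Umath} and \eqref{eq:spaces}. Thus by (a) the set $\{\phi_n(t):n\in\mathbb{N},\,t\in[0,T]\}$ is relatively compact in $\Uprime$, while (c) bounds the c\`adl\`ag modulus ${w}_{[0,T],{\Umath }^{\prime }}$ uniformly in $n$; the compactness criterion for the Skorokhod space (Appendix~\ref{sec:Cadlag_functions}) then furnishes a subsequence, still denoted $(\phi_n)$, with $\phi_n\to\phi$ in $\Dmath([0,T];\Uprime)$. \textbf{(Step 2, $\Dmath([0,T];{\Hmath }_{w})$.)} I would apply Lemma~\ref{lem:D(0,T,{hmath }_{w})_conv}, whose hypotheses are exactly (a) and the convergence from Step~1; it yields $\phi,\phi_n\in\Dmath([0,T];{\ball}_{w})$ and $\phi_n\to\phi$ in $\Dmath([0,T];{\ball}_{w})$, hence in $\Dmath([0,T];{\Hmath }_{w})$. \textbf{(Step 3, ${L}^{2}_{w}(0,T;\Vmath)$.)} By (b) the sequence is bounded in the Hilbert space ${L}^{2}(0,T;\Vmath)$, so Banach--Alaoglu gives a weakly convergent subsequence; testing against functions $g(\cdot)u$ with $g\in C_c^\infty(0,T)$ and $u\in\Umath$ identifies its weak limit with the $\Dmath([0,T];\Uprime)$-limit $\phi$. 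Hence $\phi\in{L}^{2}(0,T;\Vmath)$ and $\phi_n\to\phi$ in ${L}^{2}_{w}(0,T;\Vmath)$.

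\textbf{(Step 4, ${L}^{2}(0,T;{\Hmath }_{loc})$ --- the hard part.)} This is the only genuine compactness step and the point where the argument departs from the classical Dubinsky theorem. Fixing $R\in\mathbb{N}$, on the bounded domain ${\mathcal{O}}_R$ the restriction of $\Vmath$-functions embeds compactly into $L^2({\mathcal{O}}_R)$ by Rellich--Kondrachov, so spatial oscillations are controlled through bound (b); the temporal oscillations are controlled, uniformly in $n$, by the modulus estimate (c) measured in the weaker norm of $\Uprime$. Combining spatial compactness with uniform time-equicontinuity by an Aubin--Lions--Simon type argument (the localized modification of \cite[Lemma~4.1]{EM'14}) should produce a further subsequence converging strongly in ${L}^{2}(0,T;L^2({\mathcal{O}}_R))$; a diagonal extraction over $R\in\mathbb{N}$ then gives $p_{T,R}(\phi_n-\phi)\to0$ for every $R$, that is, $\phi_n\to\phi$ in ${L}^{2}(0,T;{\Hmath }_{loc})$. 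I expect the main obstacle to be verifying that the Skorokhod c\`adl\`ag modulus ${w}_{[0,T],{\Umath }^{\prime }}$, which permits a jump-compensating time reparametrization, still controls time-translations of $\phi_n$ in $\Uprime$ strongly enough to feed the Aubin--Lions--Simon compactness on each ${\mathcal{O}}_R$; this is precisely where Lemma~\ref{lem:D(0,T,{hmath }_{w})_conv} and the structure of the chain \eqref{eq:spaces} enter.

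Finally, all four extractions share the same limit $\phi$, which by Steps~1--4 belongs to ${L}^{2}_{w}(0,T;\Vmath)\cap{L}^{2}(0,T;{\Hmath }_{loc})\cap\Dmath([0,T];{\Hmath }_{w})\cap\Dmath([0,T];\Uprime)=\zcal$. Therefore $\phi_n\to\phi$ in the topology $\tcal$. Since $(\phi_n)\subset\mathcal{K}$ was arbitrary, every sequence in $\mathcal{K}$ admits a $\tcal$-convergent subsequence with limit in $\zcal$, which is the asserted $\tcal$-relative compactness of $\mathcal{K}$ in $\zcal$; in particular $\mathcal{K}\subset\zcal$.
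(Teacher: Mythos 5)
Your Steps 1--3 are correct and coincide with the standard argument (the paper itself gives no proof here, only the citation to \cite[Lemma 4.1]{EM'14} combined with Lemma \ref{lem:D(0,T,{hmath }_{w})_conv}). The genuine gap is the one you yourself flag in Step 4. An Aubin--Lions--Simon argument needs two things you do not supply: (i) the uniform time-translation estimate $\sup_{n}\int_{0}^{T-h}\nnorm{{\phi }_{n}(t+h)-{\phi }_{n}(t)}{\Uprime }{2}\,dt\to 0$ as $h\to 0$, which you leave as an ``expected obstacle''; and (ii) a genuine triple of embeddings $X\hookrightarrow B\hookrightarrow Y$ --- here $B={L}^{2}({\mathcal{O}}_{R})$ does not embed into $Y=\Uprime $, since restriction to ${\mathcal{O}}_{R}$ loses information, so Simon's theorem does not apply verbatim. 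As written, Step 4 is a plan, not a proof. (A minor further imprecision: your opening claim that all four topologies are comparable with the Skorokhod topology, ``the weakest of them'', is not literally true --- weak ${L}^{2}(0,T;\Vmath )$ convergence neither implies nor is implied by convergence in $\Dmath ([0,T];\Uprime )$ --- but this is harmless because you identify the limits directly in Step 3.)

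The route taken in the cited Lemma 4.1 of \cite{EM'14} closes Step 4 without any translation estimate, and is worth recording. First, a contradiction argument using the compactness of the restriction $\Vmath \to {L}^{2}({\mathcal{O}}_{R})$ (Rellich) and the continuity of $\Vmath \hookrightarrow \Uprime $ yields the Ehrling-type inequality: for every $\eps >0$ and $R$ there is ${C}_{\eps ,R}$ with $\nnorm{u}{{L}^{2}({\mathcal{O}}_{R})}{2}\le \eps \norm{u}{\Vmath }{2}+{C}_{\eps ,R}\nnorm{u}{\Uprime }{2}$ for all $u\in \Vmath $. Next, convergence in $\Dmath ([0,T];\Uprime )$ gives ${\phi }_{n}(t)\to \phi (t)$ in $\Uprime $ at every continuity point of $\phi $, hence for a.e.\ $t$; by (a) the integrands are uniformly bounded, so $\int_{0}^{T}\nnorm{{\phi }_{n}(t)-\phi (t)}{\Uprime }{2}\,dt\to 0$ by dominated convergence. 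Integrating the Ehrling inequality in time and using (b) then gives $\limsup_{n}{p}_{T,R}({\phi }_{n}-\phi )^{2}\le \eps M$ for every $\eps $, i.e.\ convergence in ${L}^{2}(0,T;{\Hmath }_{loc})$. For completeness: the translation estimate you worry about can in fact be extracted from (a) and (c) by partitioning $[0,T]$ into intervals of length at least $\delta $ on which the $\Uprime $-oscillation is controlled by the modulus and bounding the exceptional set of $t$ (those for which $[t,t+h]$ straddles a partition point, of measure at most $h(T/\delta +1)$) by the uniform $\Hmath $-bound; so your route is salvageable, but it is more delicate than necessary and you have not carried it out.
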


\medskip
\subsection{Tightness criterion} 

\medskip \noindent
Let $(\Omega , \mathcal{F} ,\p )$ be a probability space with filtration $\mathbb{F}:=({\mathcal{F} }_{t}{)}_{t \in [0,T]}$ satisfying the usual conditions.
Using Theorem \ref{th:Dubinsky_cadlag_unbound}, we get the corresponding tightness criterion in the
measurable  space  $(\zcal ,\sigma (\zcal ))$.

\medskip
\begin{cor}  \label{cor:tigthness_criterion_cadlag_unbound}
(See \cite[Corollary 1]{EM'13}.)
\it Let $(\Xn {)}_{n \in \mathbb{N} }$ be a sequence of c\`{a}dl\`{a}g $\mathbb{F}$-adapted 
${\Umath }^{\prime }$-valued processes such that
\begin{description}
\item[(a)] there exists a positive constant ${C}_{1}$ such that
\[
 \sup_{n\in \mathbb{N}}\e \bigl[ \sup_{s \in [0,T]} \nnorm{\Xn (s)}{\Hmath }{}  \bigr] \; \le \; {C}_{1} ,
\]
\item[(b)] there exists a positive constant ${C}_{2}$ such that
\[
 \sup_{n\in \mathbb{N}}\e \Bigl[  \int_{0}^{T} \norm{\Xn (s)}{\Vmath }{2} \, ds    \Bigr] \;  \le \; {C}_{2} ,
\]
\item[(c)]  $(\Xn {)}_{n \in \mathbb{N} }$ satisfies the Aldous condition  in ${\Umath }^{\prime }$.
\end{description}
Let ${\tilde{\p }}_{n}$ be the law of $\Xn $ on ${\zcal }_{}$.
Then for every $\eps >0 $ there exists a compact subset ${K}_{\eps }$ of ${\zcal }_{}$ such that
\[
{\tilde{\p }}_{n} ({K}_{\eps }) \; \ge \;  1 - \eps .
\]
\end{cor}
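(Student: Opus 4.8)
The plan is to use the deterministic compactness Theorem~\ref{th:Dubinsky_cadlag_unbound} to produce compact subsets of $\zcal$, and then to show via the three probabilistic hypotheses that these compacts carry almost all the mass of each law $\tilde{\p}_n$, uniformly in $n$. Concretely, for a fixed $\eps >0$ I would construct $K_\eps$ as the closure in $\zcal$ of an intersection of level sets controlling, respectively, the supremum of the $\Hmath$-norm, the $L^{2}(0,T;\Vmath)$-norm, and the modulus of continuity in $\Uprime$, choosing the radii and gauges so that each complementary event has small probability. The three hypotheses of Corollary~\ref{cor:tigthness_criterion_cadlag_unbound} are tailored precisely to the three hypotheses (a)--(c) of Theorem~\ref{th:Dubinsky_cadlag_unbound}.

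For the first two level sets I would argue by the Chebyshev inequality. Using hypothesis (a), for $R_1>0$ one has $\p\bigl(\sup_{s\in[0,T]}\nnorm{\Xn(s)}{\Hmath}{} > R_1\bigr) \le C_1/R_1$, and using hypothesis (b), $\p\bigl(\int_0^T \norm{\Xn(s)}{\Vmath}{2}\,ds > R_2\bigr) \le C_2/R_2$; both bounds are uniform in $n$ since $C_1,C_2$ are. I would then pick $R_1$ and $R_2$ large enough that each probability is at most $\eps/3$, which secures conditions (a) and (b) of Theorem~\ref{th:Dubinsky_cadlag_unbound} on the corresponding sets.

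The essential step is to convert the Aldous condition (c) into the vanishing-modulus requirement (c) of Theorem~\ref{th:Dubinsky_cadlag_unbound}. By the standard consequence of the Aldous condition for c\`adl\`ag processes (as in \cite{EM'13}, cf.\ \cite{Metivier'88}), for each positive integer $k$ one may choose $\delta_k>0$ such that $\sup_n \p\bigl(w_{[0,T],\Uprime}(\Xn;\delta_k) > 1/k\bigr) \le \eps\, 2^{-k}/3$. Setting $\mathcal{C}_\eps := \bigcap_{k\ge 1} \{\phi : w_{[0,T],\Uprime}(\phi;\delta_k) \le 1/k\}$, every $\phi\in\mathcal{C}_\eps$ satisfies $\lim_{\delta\to0} w_{[0,T],\Uprime}(\phi;\delta)=0$, while the union bound gives $\sup_n \p(\Xn\notin\mathcal{C}_\eps) \le \eps/3$. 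I expect this passage from the Aldous condition to uniform control of the c\`adl\`ag modulus to be the main obstacle, since it rests on the nontrivial lemma relating increments at stopping times to the Skorokhod modulus of continuity.

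Finally I would let $K_\eps$ be the closure in $\zcal$ of the intersection of the three level sets above. By conditions (a)--(c) of Theorem~\ref{th:Dubinsky_cadlag_unbound} this intersection is $\tcal$-relatively compact, so $K_\eps$ is compact; moreover $\{\Xn\in K_\eps\}$ contains the intersection of the three favourable events, so a union bound over their complements yields $\tilde{\p}_n(K_\eps) \ge 1-\eps$ for every $n$, which is the assertion.
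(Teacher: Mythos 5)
Your proposal is correct and follows essentially the same route as the proof the paper relies on (Corollary 1 of \cite{EM'13}): Chebyshev bounds from hypotheses (a) and (b), the standard passage from the Aldous condition to uniform-in-$n$ control of the c\`adl\`ag modulus in $\Uprime$, and then the deterministic compactness criterion of Theorem \ref{th:Dubinsky_cadlag_unbound} applied to the intersection of the three level sets, closed up in $\zcal$. One small point of wording: condition (c) of Theorem \ref{th:Dubinsky_cadlag_unbound} requires $\lim_{\delta\to 0}\sup_{\phi\in\mathcal{C}_\eps} {w}_{[0,T],{\Umath }^{\prime }}(\phi ;\delta )=0$ uniformly over the set, not merely pointwise as you state it, but your construction does deliver this since $\sup_{\phi\in\mathcal{C}_\eps}{w}_{[0,T],{\Umath }^{\prime }}(\phi ;{\delta }_{k})\le 1/k$ and the modulus is monotone in $\delta$.
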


\medskip  \noindent 
For the completeness of presentation we recall the Aldous condition in the form given by  M\'{e}tivier \cite{Metivier'88}.

\medskip  \noindent
\begin{definition} (M. M\'{e}tivier) \label{def:Aldous_U'}
\rm A sequence $({X}_{n}{)}_{n\in \mathbb{N} }$  satisfies the \bf  Aldous condition \rm
in the space ${\Umath }^{\prime }$
iff
\begin{description}
\item[$\mbox{\bf [A]\rm }$ ] for every $\eps >0 $  and $\eta >0 $ there exists $\delta >0 $ such that for every sequence $({{\tau}_{n} } {)}_{n \in \mathbb{N} }$ of $\mathbb{F}$-stopping times with
${\tau }_{n}\le T$ one has
\[
\sup_{n \in \mathbb{N}} \, \sup_{0 \le \theta \le \delta }  \p \bigl\{
\nnorm{ {X}_{n} ({\tau }_{n} +\theta )-{X}_{n} ( {\tau }_{n}  ) }{{\Umath }^{\prime }}{} \ge \eta \bigr\} 
\; \le \;  \eps .
\]
\end{description}
\end{definition}

\medskip  \noindent
In the following lemma we recall a certain condition which guarantees that the sequence $({X}_{n}{)}_{n\in \mathbb{N} }$  satisfies  condition \rm \bf [A]\rm .

\medskip  
\begin{lemma} \label{lem:Aldous_criterion} 
(See \cite[Lemma 9]{EM'13})
 \it
Let $(E,\norm{\cdot }{E}{})$ be a separable Banach space and let $({X}_{n}{)}_{n \in \mathbb{N} }$ be a sequence of $E$-valued random variables such that 
\begin{itemize} 
\item[\mbox{\bf [A']\rm } ] there exist $\alpha ,\beta >0 $ and $C>0$ such that
for every sequence $({{\tau}_{n} } {)}_{n \in \mathbb{N} }$ of $\mathbb{F}$-stopping times with
${\tau }_{n}\le T$ and for every $n \in \mathbb{N} $ and $\theta \ge 0 $ the following condition holds
\begin{equation} \label{eq:Aldous_est}
\e \bigl[ \bigl( \norm{ {X}_{n} ({\tau }_{n} +\theta )-{X}_{n} ( {\tau }_{n}  ) }{E}{\alpha } \bigr] 
\; \le \;  C {\theta }^{\beta } .
\end{equation}
\end{itemize}
Then the sequence $({X}_{n}{)}_{n\in \mathbb{N} }$  satisfies  condition \rm \bf [A] \rm in the space $E$.  \rm
\end{lemma}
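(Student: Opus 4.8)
The plan is to deduce the Aldous condition \textbf{[A]} from condition \textbf{[A']} by a direct application of the Chebyshev--Markov inequality, exploiting the fact that the bound in \textbf{[A']} is uniform in $n$ and in the choice of the stopping-time sequence $({\tau }_{n})$. Fix $\eps >0 $ and $\eta >0 $; the goal is to produce a single $\delta >0 $ that works simultaneously for all $n \in \mathbb{N} $, all admissible stopping-time sequences, and all $\theta \in [0,\delta ]$.

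The key step is the following chain of estimates. For any $n \in \mathbb{N} $, any sequence $({\tau }_{n})$ of $\mathbb{F}$-stopping times with ${\tau }_{n} \le T$, and any $\theta \ge 0 $, I would first rewrite the event in terms of the $\alpha $-th power of the increment and then apply the Markov inequality together with the hypothesis \eqref{eq:Aldous_est}:
\[
\p \bigl\{ \norm{ {X}_{n} ({\tau }_{n} +\theta )-{X}_{n} ( {\tau }_{n}  ) }{E}{} \ge \eta \bigr\}
\; = \; \p \bigl\{ \norm{ {X}_{n} ({\tau }_{n} +\theta )-{X}_{n} ( {\tau }_{n}  ) }{E}{\alpha } \ge {\eta }^{\alpha } \bigr\}
\; \le \; \frac{1}{{\eta }^{\alpha }} \, \e \bigl[ \norm{ {X}_{n} ({\tau }_{n} +\theta )-{X}_{n} ( {\tau }_{n}  ) }{E}{\alpha } \bigr]
\; \le \; \frac{C {\theta }^{\beta }}{{\eta }^{\alpha }} .
\]
Since the rightmost bound is increasing in $\theta $, for every $\theta \in [0,\delta ]$ it is dominated by $C {\delta }^{\beta }/{\eta }^{\alpha }$, and this bound depends neither on $n$ nor on the particular sequence $({\tau }_{n})$.

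It then remains to choose $\delta $ so that this uniform bound does not exceed $\eps $. Taking any
\[
\delta \; \le \; \Bigl( \frac{\eps \, {\eta }^{\alpha }}{C} \Bigr)^{\!\! 1/\beta }
\]
guarantees $C {\delta }^{\beta }/{\eta }^{\alpha } \le \eps $, and therefore
\[
\sup_{n \in \mathbb{N}} \, \sup_{0 \le \theta \le \delta } \p \bigl\{ \norm{ {X}_{n} ({\tau }_{n} +\theta )-{X}_{n} ( {\tau }_{n}  ) }{E}{} \ge \eta \bigr\} \; \le \; \eps ,
\]
which is precisely condition \textbf{[A]}. I do not anticipate any genuine obstacle here: the argument is a one-line moment estimate followed by an elementary choice of $\delta $, and the separability and Banach structure of $E$ are used only to make the norm a measurable function so that the probabilities and expectations are well defined. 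The one point deserving a word of care is that $\delta $ must be taken independent of $n$ and of $({\tau }_{n})$, which is automatic because the constants $\alpha ,\beta ,C$ in \textbf{[A']} are themselves uniform in these data.
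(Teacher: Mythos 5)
Your argument is correct and is exactly the standard proof of this lemma: the paper itself states it without proof, citing \cite[Lemma 9]{EM'13}, where the argument is precisely this Chebyshev--Markov estimate followed by the choice $\delta \le (\eps\,\eta^{\alpha}/C)^{1/\beta}$, uniform in $n$ and in the stopping-time sequence because the constants $\alpha,\beta,C$ in \textbf{[A']} are. Nothing further is needed.
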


\medskip
\section{Existence of a martingale solution} \label{sec:existence}

\medskip  \noindent
We will use the structure introduced in Section \ref{sec:aux_funct.anal}.
By \eqref{eq:spaces} we have the following spaces $\Umath  \subset \Vast \subset \Vtest  \subset \Vmath \subset  \Hmath $, where $\Vast = {\Vmath }_{m}$ for fixed $m>\frac{5}{2}$.
Considering  the dual spaces, and identifying $\Hmath $ with its dual $\Hmath '$, we have the following system 
\begin{equation}
\Umath \; \subset \;  \Vast \; \subset \Vtest  \; \subset \; \Vmath \; \subset \; \Hmath \; \cong \; \Hmath '
\; \to \; \Vprime \; \to \; \Vastprime \; \to \; \Uprime .  
\label{eq:spaces_system} 
\end{equation}
In \eqref{eq:P_n} we have defined  the map 
$ 
  \Pn : \Hmath  \to \Hn
$
being the  $\ilsk{\cdot }{\cdot }{\Hmath }$-orthogonal projection onto $(\Hn ,\ilsk{\cdot }{\cdot }{\Hmath })$. Using further properties of the map $\Pn $  stated in Corollary \ref{cor:P_n-pointwise_conv} we will consider the adjoint operators  $\Pn '$. 
 
\medskip  
\begin{remark} \label{rem:Pn_dual}
\rm \
 Since  $\Pn \in \mathcal{L} ( \Vmath , \Vmath )$,  its adjoint $\Pn ' \in \mathcal{L} (\Vprime ,\Vprime )$ by definition  satisfies
\begin{equation} \label{eq:P_n_V_V_adj}
\ddual{\Vprime}{\xi }{\Pn \varphi}{\Vmath }
\; = \; \ddual{\Vprime}{{\Pn '}\xi }{ \varphi}{\Vmath } , \qquad
\mbox{ for all } \quad \xi \in \Vprime , \quad \varphi \in \Vmath   . 
\end{equation} 
Since  $\Pn \in \mathcal{L} (\Vast , \Vmath) $,  its adjoint $\Pn ' \in \mathcal{L} (\Vprime , \Vastprime )$
satisfies
\begin{equation} \label{eq:P_n_Vast_V_adj}
\ddual{\Vprime}{\xi }{\Pn \varphi}{\Vmath }
\; = \; \ddual{\Vastprime}{{\Pn '}\xi }{ \varphi}{\Vast } , \qquad
\mbox{ for all } \quad \xi \in \Vprime , \quad \varphi \in \Vast   .    
\end{equation}
Since  $\Pn \in \mathcal{L} (\Vast ,\Vast )$,  its adjoint $\Pn ' \in \mathcal{L} (\Vastprime , \Vastprime )$ satisfies
\begin{equation} \label{eq:P_n_Vast_Vast_adj}
\ddual{\Vastprime}{\xi }{\Pn \varphi}{\Vast }
 \,\, = \,\, \ddual{\Vastprime}{{\Pn '}\xi }{ \varphi}{\Vast } , \qquad
\mbox{ for all } \quad \xi \in \Vastprime , \quad \varphi \in \Vast   .    
\end{equation}
\end{remark}

\medskip  \noindent
Remark \ref{rem:Pn_dual} enables us to rewrite the truncated equation  as an equation in the space $\Vastprime $, and by the injection $\Vastprime \to \Uprime $ - also as an equation in $\Uprime $.

\medskip  
\begin{remark} \label{rem:truncated_U'} \rm \
\begin{description}
\item[(i) ] If the $\Hn$-valued process $\Xn $ satisfies identity \eqref{eq:Hall-MHD_truncated_weak}, then
in particular, for all $t \in [0,T]$ and $\varphi \in \Vast   $ we have $\Pn \varphi \in \Hn$ and  
\begin{equation}
\begin{split}
&\ilsk{\Xn (t)}{\Pn \varphi}{\Hmath }
 + \int_{0}^{t} \ddual{\Vprime }{ \mathcal{A}  \Xn (s)}{\Pn  \varphi }{\Vmath } \, ds
+ \int_{0}^{t} \ddual{\Vastprime}{ \MHD  (\Xn (s))}{\Pn \varphi}{\Vast}  ds  \\
& \qquad \quad 
+ \int_{0}^{t} \ddual{\Vastprime }{ \tHall   (\Xn (s))}{\Pn  \varphi }{\Vast } \, ds
  \\
& \quad  \; = \; \ilsk{{\X }_{0}}{\Pn \varphi}{\Hmath }
+ \int_{0}^{t} \int_{Y}  \ilsk{F(s,\Xn ({s}^{-});y ) }{\Pn \varphi  }{\Hmath } \, \tilde{\eta }(ds,dy) .
\end{split} \label{eq:truncated_weak_Pn}
\end{equation}
Since $\Pn : \Hmath \to \Hn $ is an $\ilsk{\cdot }{\cdot }{\Hmath }$-orthogonal projection, we have
$
\ilsk{\Xn (t)}{ \Pn \varphi}{\Hmath } = \ilsk{\Pn \Xn (t)}{ \varphi}{\Hmath } = \ilsk{\Xn (t)}{ \varphi}{\Hmath }.
$
Using the operators $\Pn '$ from Remark \ref{rem:Pn_dual}, identity   \eqref{eq:truncated_weak_Pn} can be written in the form
\begin{equation}
\begin{split}
&\ilsk{\Xn (t)}{ \varphi}{\Hmath }
 + \int_{0}^{t} \ddual{\Vprime }{ \Pn ' \mathcal{A}    \Xn (s)}{  \varphi }{\Vmath  } \, ds
+ \int_{0}^{t} \ddual{\Vastprime }{ \Pn ' \MHD  (\Xn (s))}{ \varphi}{\Vast }  ds \\
& \qquad \quad  + \int_{0}^{t} \ddual{\Vastprime }{\Pn ' \tHall (\Xn (s))}{  \varphi }{\Vast } \, ds
\\
& \quad  \; = \; \ilsk{\Pn {\X }_{0}}{ \varphi}{\Hmath }
+ \int_{0}^{t} \int_{Y}  \ilsk{F_n(s,\Xn ({s}^{-});y ) }{\varphi  }{\Hmath } \, \tilde{\eta }(ds,dy) . \qquad t \in [0,T] ,
\end{split}   \label{eq:truncated_weak_Pn'}
\end{equation}
where $F_n$ is defined by \eqref{eq:F_n}.
\item[(ii) ] By the identification $\Hmath \cong \Hmath '$ and the fact that 
$\Hmath ' \hookrightarrow \Vmath ' \hookrightarrow \Vastprime $,
we identify $\Xn (t)$ with the functional induced by $\Xn (t)$ on the space $\Vast $. From \eqref {eq:truncated_weak_Pn'}, we infer that $\Xn (t)$ satisfies the following equation
\begin{equation} 
\begin{split}
& \Xn (t) + \int_{0}^{t}\bigl[ \Pn ' \mathcal{A} \Xn (s)
+ \Pn ' \MHD   \bigl( \Xn (s) \bigr) + \Pn ' \tHall  (\Xn (s)) \bigr] \, ds  
\\ 
& = \,\,  \Pn {\X }_{0} 
+ \int_{0}^{t} \int_{Y} F_n(s,\Xn ({s}^{-});y )  \, \tilde{\eta }(ds,dy),    \qquad  t \in [0,T]  .  
\end{split} \label{eq:truncated}
\end{equation}  
\end{description}
\end{remark}

\medskip
\subsection{Tightness} \label{sec:tightness}

\medskip  \noindent
Let us consider the sequence ${(\Xn )}_{n\in \mathbb{N} }$ of approximate solutions. We will prove that the sequence of laws is tight in the space $\zcal $ defined by \eqref{eq:Z_cadlag}, i.e.
\begin{equation*}
\mathcal{Z} \; := \; {L}_{w}^{2}(0,T;\Vmath )  
\cap {L}^{2}(0,T;{\Hmath }_{loc})  \cap \Dmath ([0,T];{\Hmath }_{w}) \cap \Dmath ([0,T]; {\Umath }^{\prime }), 
\end{equation*}
equipped with the Borel $\sigma $-field  $\sigma(\tcal )$, see Definition \ref{def:space_Z_cadlag}.

\medskip
\begin{lemma} \label{lem:X_n-tightness}
The set of probability measures $\{ \Law (\Xn ), n \in \mathbb{N}   \} $ is tight on the space $(\zcal ,\sigma(\tcal ))$. 
\end{lemma}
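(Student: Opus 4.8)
The plan is to verify the three hypotheses of the tightness criterion in Corollary~\ref{cor:tigthness_criterion_cadlag_unbound}, applied to the sequence $(\Xn)$ regarded as a sequence of c\`adl\`ag $\Uprime$-valued processes. Conditions (a) and (b) of that corollary are immediate consequences of the a priori bounds of Lemma~\ref{lem:Hall-MHD_truncated_estimates} taken with $q=2$: estimate \eqref{eq:H_estimate_truncated_p} gives $\sup_n \e[\sup_{s\in[0,T]}\nnorm{\Xn(s)}{\Hmath}{2}]\le {C}_{1}(2)$, whence (a) follows by the Cauchy--Schwarz inequality, while \eqref{eq:V_estimate_truncated} is exactly (b). The entire difficulty is therefore concentrated in condition (c), the Aldous condition $[A]$ in $\Uprime$, which I would establish through the sufficient condition $[A']$ of Lemma~\ref{lem:Aldous_criterion}.

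To check $[A']$ I would start from the form \eqref{eq:truncated} of the truncated equation, which holds in $\Vastprime$ and hence, via the injection $\Vastprime\to\Uprime$, in $\Uprime$. For any sequence of $\Fmath$-stopping times $\taun\le T$ and any $\theta\ge 0$ I decompose the increment
\[
\Xn(\taun+\theta)-\Xn(\taun)
= -\int_{\taun}^{\taun+\theta}\bigl[\Pn'\mathcal{A}\Xn(s)+\Pn'\MHD(\Xn(s))+\Pn'\tHall(\Xn(s))\bigr]\,ds
+\int_{\taun}^{\taun+\theta}\!\int_Y F_n(s,\Xn(s^-);y)\,\tilde{\eta}(ds,dy)
\]
into its four natural pieces and verify $[A']$ for each separately, the Aldous condition being stable under finite sums. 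All bounds will use the continuous embeddings $\Hmath,\Vprime,\Vastprime\hookrightarrow\Uprime$ from \eqref{eq:spaces_system} together with the uniform-in-$n$ boundedness of the Fourier truncation operators $\Pn$ (and hence of the adjoints $\Pn'$) on the relevant spaces. For the Stokes term I would use \eqref{eq:Acal_norm} and Cauchy--Schwarz in time to obtain $\e\bigl[\int_{\taun}^{\taun+\theta}\nnorm{\Pn'\mathcal{A}\Xn(s)}{\Uprime}{}\,ds\bigr]\le c\,\theta^{1/2}\,{C}_{2}^{1/2}$, i.e. $[A']$ with $\alpha=1$, $\beta=\frac12$. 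For $\MHD$ the bound \eqref{eq:MHD-map_est-H-H} (recall $\Vast={\Vmath}_{m}$ with $m>\frac{5}{2}$) gives $\nnorm{\MHD(\Xn(s))}{\Vastprime}{}\le c\,\nnorm{\Xn(s)}{\Hmath}{2}$, and hence a bound by $c\,\theta\,\e[\sup_{s}\nnorm{\Xn(s)}{\Hmath}{2}]\le c\,\theta\,{C}_{1}(2)$, so $\alpha=1$, $\beta=1$; for the Hall term the estimate \eqref{eq:tHall-map_est-H-V} yields $\nnorm{\tHall(\Xn(s))}{\Vastprime}{}\le c\,\nnorm{\Xn(s)}{\Hmath}{}\norm{\Xn(s)}{\Vmath}{}$, and Cauchy--Schwarz again produces $\alpha=1$, $\beta=\frac12$.

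For the stochastic term I would rewrite the increment as $\int_0^T\!\int_Y \ind{(\taun,\taun+\theta]}(s)\,F_n(s,\Xn(s^-);y)\,\tilde{\eta}(ds,dy)$ with a predictable integrand, so that the isometry formula \eqref{eq:isometry} applies in $\Hmath$ and, combined with the growth bound \eqref{eq:F_linear_growth} (with $q=2$) and $\nnorm{\Pn F}{\Hmath}{}\le\nnorm{F}{\Hmath}{}$, gives $\e[\nnorm{\,\cdot\,}{\Hmath}{2}]\le {K}_{2}\,\theta\,(1+\e[\sup_{s}\nnorm{\Xn(s)}{\Hmath}{2}])$, i.e. $[A']$ in $\Hmath\hookrightarrow\Uprime$ with $\alpha=2$, $\beta=1$. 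By Lemma~\ref{lem:Aldous_criterion} each of the four pieces then satisfies $[A]$ in $\Uprime$, hence so does $(\Xn)$, and Corollary~\ref{cor:tigthness_criterion_cadlag_unbound} delivers the asserted tightness on $(\zcal,\sigma(\tcal))$. The step requiring the most care is the Hall contribution: unlike the MHD term, $\tHall$ only maps into the larger dual $\Vastprime={\Vmath}_{m}'$ with $m>\frac{5}{2}$, so it is essential that the auxiliary space $\Umath$ was chosen with $\Umath\hookrightarrow\Vast$ compact and that the increment be measured in $\Uprime$ rather than in $\Vprime$. Tracking this embedding, and pairing $\sup_{s}\nnorm{\Xn}{\Hmath}{}$ with $\int_0^T\norm{\Xn}{\Vmath}{2}\,ds$ so that only the available $q=2$ a priori estimates are invoked, is the main point of the argument.
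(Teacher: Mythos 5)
Your proposal is correct and follows essentially the same route as the paper: conditions (a) and (b) of Corollary \ref{cor:tigthness_criterion_cadlag_unbound} from the $q=2$ a priori estimates, and the Aldous condition via \textbf{[A']} applied term by term to the decomposition of \eqref{eq:truncated}, with the same choice of target spaces ($\Vmath'$ for the Stokes term, $\Vastprime$ for the MHD and Hall terms, $\Hmath$ for the noise term via the isometry \eqref{eq:isometry}) and the same exponents $\theta^{1/2}$, $\theta$, $\theta^{1/2}$, $\theta$. The only cosmetic differences are that you absorb the constant initial-condition term into the increment from the start and that you make explicit the indicator-function rewriting needed to apply the isometry over a stochastic interval, both of which the paper leaves implicit.
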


\medskip
\begin{proof}
We apply Corollary \ref{cor:tigthness_criterion_cadlag_unbound}.
By estimates  \eqref{eq:H_estimate_truncated_p} and \eqref{eq:V_estimate_truncated}, conditions \bf (a) \rm  and \bf (b) \rm   of Corollary \ref{cor:tigthness_criterion_cadlag_unbound} are satisfied. Thus, it is sufficient to  prove that the sequence $(\Xn {)}_{n \in \mathbb{N} }$ satisfies the Aldous condition \textbf{[A]}. 
By Lemma \ref{lem:Aldous_criterion} it is sufficient to prove that $(\Xn {)}_{n \in \mathbb{N} }$ satisfies condition \textbf{[A']}.

\medskip  \noindent
Let ${(\taun )}_{n \in \mathbb{N}} $ be a sequence of stopping times taking values in $[0,T]$.
By Remark \ref{rem:truncated_U'} (ii), we have
\begin{equation*}
\begin{split}
\Xn (t) 
\; &= \; \Pn {\X }_{0}  - \int_{0}^{t} \Pn ' \mathcal{A}  \Xn (s) \, ds 
    - \int_{0}^{t} \Pn '\MHD \bigl( \Xn (s) \bigr) \, ds
  - \int_{0}^{t} \Pn ' \tHall   (\Xn (s)) \, ds  
  \\  
& \qquad   + \int_{0}^{t} \int_{Y}  F_n (s,\Xn ({s}^{-});y )  \, \tilde{\eta }(ds,dy)
   \\
\; & =: \;   \Jn{1} + \Jn{2}(t) + \Jn{3}(t) + \Jn{4}(t) + \Jn{5}(t) , \qquad t \in [0,T].
\end{split}  
\end{equation*}
Let us choose  $\theta  >0 $.  It is sufficient to show that each sequence $\Jn{i}$ of processes, $i=1,\cdots, 5$, satisfies the sufficient condition \textbf{[A']} from Lemma \ref{lem:Aldous_criterion}.

\medskip  \noindent
Obviously the term $\Jn{1}$ which is  constant in time,  satisfies this condition.
 In fact, we will check that the term $\Jn{2}$ satisfies condition
 \textbf{[A']} from Lemma \ref{lem:Aldous_criterion} in the space $E={{\Vmath } }^{\prime }$, the terms $\Jn{3}, \Jn{4}$ satisfy this condition in  $E=\Vastprime $ and the term $\Jn{5}$ satisfies this condition in $E=\Hmath $. Since the embeddings $\Hmath \subset {\Umath }^{\prime }$,
 ${{\Vmath } }^{\prime } \subset {\Umath }^{\prime } $ and
$\Vastprime \subset {\Umath }^{\prime }$   are continuous, we infer that
\textbf{[A']} holds in the space $E={\Umath }^{\prime }$, as well.

\medskip \noindent
\textbf{ Ad } ${\Jn{2}}$. \rm  By Remark \ref{rem:Acal-term_properties},
 the linear operator $\mathcal{A} :{\Vmath }  \to {{\Vmath } }^{\prime }$ is bounded, 
and by Corollary \ref{cor:P_n-pointwise_conv},  $\sup_{n\in \mathbb{N} } \nnorm{\Pn }{\mathcal{L} (\Vmath ,\Vmath )}{} < \infty  $. Using the H\"older inequality and \eqref{eq:V_estimate_truncated}, we obtain
\begin{equation}
\begin{split}
& \mathbb{E}\,\bigl[ \nnorm{ \Jn{2} (\taun + \theta ) - \Jn{2}(\taun )  }{{{\Vmath } }^{\prime }}{}  \bigr]
\; \le \; \nnorm{\Pn '}{\mathcal{L} (\Vprime ,\Vprime )}{} \, \mathbb{E}\,\Bigl[  \int_{\taun }^{\taun + \theta }
\nnorm{\mathcal{A}  \Xn (s) }{{\Vmath }^{\prime } }{}  \, ds \biggr]
 \\
& \; \le \;  \nnorm{\Pn }{\mathcal{L} (\Vmath ,\Vmath )}{} \, {\theta }^{\frac{1}{2}} \Bigl( \mathbb{E}\,\Bigl[  \int_{0 }^{T }
 \norm{  \Xn (s) }{}{2}  \, ds \Bigr] {\Bigr) }^{\frac{1}{2}}
\; \le \;   \nnorm{\Pn }{\mathcal{L} (\Vmath ,\Vmath )}{} \, {C}_{2}^{\frac{1}{2}} \cdot {\theta }^{\frac{1}{2}}
\; = \; c_2 \cdot {\theta }^{\frac{1}{2}} ,  
\end{split} \label{eqn-Jn2} 
\end{equation}
where $c_2 = {C}_{2}^{\frac{1}{2}} \cdot \sup_{n\in \mathbb{N} } \nnorm{\Pn }{\mathcal{L} (\Vmath ,\Vmath )}{} <\infty  $.

\medskip 
\noindent
\textbf{Ad } ${\Jn{3}}$. \rm 
By \eqref{eq:MHD-map_est-H-H} in Lemma \ref{lem:MHD-term_properties},
 $\MHD : {\Hmath } \times {\Hmath } \to {\Vmath }_{\ast }^{\prime } $ is bilinear and continuous (and hence bounded so that   the norm $\| \MHD \| $ of $\MHD : {\Hmath } \times {\Hmath } \to {{\Vmath }}_{\ast  }^{\prime }$ is finite), and  by Corollary \ref{cor:P_n-pointwise_conv},  $\sup_{n\in \mathbb{N} } \nnorm{\Pn }{\mathcal{L} (\Vast ,\Vast )}{} < \infty  $.  Then by \eqref{eq:H_estimate_truncated_p} we have the following estimates
\begin{equation}
\begin{split}
&\mathbb{E}\,\bigl[ \nnorm{ \Jn{3}(\taun +\theta ) - \Jn{3}(\taun)}{{{\Vmath }}_{\ast }^{\prime }}{}  \bigr]
\; = \; \nnorm{\Pn '}{\mathcal{L} (\Vastprime ,\Vastprime )}{} \, \mathbb{E}\,\Bigl[  \Nnorm{ \int_{\taun }^{\taun + \theta }
\MHD \bigl(\Xn (r) \bigr) \, dr }{{{\Vmath }}_{\ast }^{\prime }}{} \Bigr]   \\
\; & \le \;  \nnorm{\Pn }{\mathcal{L} (\Vast ,\Vast )}{} \, \mathbb{E}\,\Bigl[  \int_{\taun }^{\taun + \theta }
\nnorm{ \MHD ( \Xn (r)  )  }{{{\Vmath }}_{\ast }^{\prime }}{} \, dr \Bigr]
\; \le \;  \nnorm{\Pn }{\mathcal{L} (\Vast ,\Vast )}{} \, \| \MHD \| \, \mathbb{E}\,\biggl[  \int_{\taun }^{\taun + \theta }  \nnorm{\Xn (r)}{{\Hmath }}{2}   \, dr \biggr]    \\
\; & \le \;  \nnorm{\Pn }{\mathcal{L} (\Vast ,\Vast )}{} \, \| \MHD \|  \cdot  \mathbb{E}\,\bigl[ \sup_{r \in [0,T]} \nnorm{\Xn (r)}{{\Hmath }}{2}\bigr] \cdot \theta
\; \le \;  \nnorm{\Pn }{\mathcal{L} (\Vast ,\Vast )}{} \,  \| \MHD \| \,  {C}_{1}(2)    \cdot \theta \; = \; c_3 \, \theta ,   
\end{split}  \label{eq:Jn3}
\end{equation}
where $c_3=  \sup_{n\in \mathbb{N} }\nnorm{\Pn }{\mathcal{L} (\Vast ,\Vast )}{} \,  \| \MHD \| \,  {C}_{1}(2) <\infty  $.

\medskip  \noindent
\textbf{Ad } ${\Jn{4}}$. \rm 
By \eqref{eq:tHall-map_est-H-V} in Lemma \ref{lem:tHall-term_properties} $\tHall : {\Hmath } \times {\Vmath } \to \Vastprime $ is bilinear and continuous (and hence bounded so that   the norm $\| \tHall \| $ of $\tHall : {\Hmath } \times {\Vmath } \to \Vastprime $ is finite) and by Corollary \ref{cor:P_n-pointwise_conv},  $\sup_{n\in \mathbb{N} } \nnorm{\Pn }{\mathcal{L} (\Vast ,\Vast )}{} < \infty  $.  Then by \eqref{eq:H_estimate_truncated_p} and \eqref{eq:V_estimate_truncated} we have the following estimates
\begin{equation}
\begin{split}
&  \mathbb{E}\,\bigl[ {| \Jn{4} (\taun + \theta ) - \Jn{3}(\taun) |}_{\Vastprime }  \bigr]
\; \le \;  \nnorm{\Pn '}{\mathcal{L} (\Vastprime ,\Vastprime )}{} \, \mathbb{E}\,\Bigl[  \int_{\taun }^{\taun + \theta } \nnorm{\tHall \bigl( \Xn (r)  \bigr) }{\Vastprime }{} \, dr \Bigr]   \\
\; & \le \;  \nnorm{\Pn }{\mathcal{L} (\Vast ,\Vast )}{} \norm{\tHall }{}{} \, \mathbb{E}\Bigl[ \int_{\taun }^{\taun + \theta }   \nnorm{ \Xn (r)}{\Hmath }{} \, \norm{  \Xn (r)}{}{}  \, dr \Bigr]  \\
\; &\le \;  \nnorm{\Pn }{\mathcal{L} (\Vast ,\Vast )}{} \norm{\tHall }{}{} \,  \biggl( \mathbb{E} \Bigl[   \sup_{r \in [\taun,\taun + \theta]}  \nnorm{\Xn (r)}{\Hmath }{2} \Bigr] {\biggr) }^{\frac{1}{2}}
\biggl( \mathbb{E} \Bigl[ \int_{\taun }^{\taun + \theta }
 \norm{\Xn (r)}{}{2} \, dr \Bigr] {\biggr) }^{\frac{1}{2}}  {\theta }^{\frac{1}{2}}   \\
\; & \le \;  \nnorm{\Pn }{\mathcal{L} (\Vast ,\Vast )}{} \norm{\tHall }{}{} \,   \biggl( \mathbb{E} \Bigl[   \sup_{r \in [0,T]}  \nnorm{\Xn (r)}{{\Hmath } }{2} \Bigr]  {\biggr) }^{\frac{1}{2}}
\biggl( \mathbb{E} \Bigl[ \int_{0}^{T}
       \norm{  \Xn (r)}{}{2} \, dr \Bigr] {\biggr) }^{\frac{1}{2}} {\theta }^{\frac{1}{2}}
 \\
\; & \le \;  \nnorm{\Pn }{\mathcal{L} (\Vast ,\Vast )}{} \norm{\tHall }{}{} \,  
 [{C}_{1}(2)]^{\frac{1}{2}} [{C}_{2}]^{\frac{1}{2}} {\theta }^{\frac{1}{2}} 
\; = \; c_4  {\theta }^{\frac{1}{2}}  , 
\end{split}
 \label{eqn-Jn4}
\end{equation}
where $c_4 =  \sup_{n\in \mathbb{N} }\nnorm{\Pn }{\mathcal{L} (\Vast ,\Vast )}{} \norm{\tHall }{}{} \,   [{C}_{1}(2)]^{\frac{1}{2}} [{C}_{2}]^{\frac{1}{2}} < \infty $.

\medskip \noindent
\textbf{Ad } ${\Jn{5}}$. \rm 
Let us consider the noise term ${\Jn{5}}$.  
By \eqref{eq:isometry}, the fact that $\Pn : \Hmath \to \Hn $ is the $\ilsk{\cdot }{\cdot }{\Hmath }$-orthogonal projection,
inequality \eqref{eq:F_linear_growth} with $p=2$  and by \eqref{eq:H_estimate_truncated_p}, we obtain 
\begin{equation}
\begin{split}
& \e \bigl[ \nnorm{\Jn{5} (\taun + \theta ) - \Jn{5}(\taun) }{\Hmath }{2}  \bigr] 
\; = \;  \e \Bigl[ \Nnorm{ \int_{\taun }^{\taun + \theta } \int_{Y} F_n(s, \Xn  ({s}^{-});y ) \, \tilde{\eta}(ds,dy) }{\Hmath }{2} 
\Bigr]  \\
\; &= \; \e \Bigl[  \int_{\taun }^{\taun + \theta } 
  \int_{Y} \nnorm{F_n(s, \Xn  (s);y ) }{\Hmath }{2} \, \mu (dy)ds   \Bigr]  
\; \le \;  {K}_{2}  \e \Bigl[  \int_{\taun }^{\taun + \theta } (1+\nnorm{\Xn (s)}{\Hmath }{2}) \, ds  \Bigr]  
 \\
\; &\le \; {K}_{2} \cdot  \theta \cdot \Bigl( 1+
  \e \Bigl[ \sup_{s \in [0,T]} \nnorm{\Xn (s)}{\Hmath }{2} \Bigr] \Bigr)
\; \le \; {K}_{2} \cdot (1+ {C}_{1}(2)) \cdot \theta \; =: \; {c}_{5} \cdot \theta , 
\end{split}
\label{eq:Jn5}
\end{equation}
where ${c}_{5} = {K}_{2} \cdot (1+ {C}_{1}(2))$.
Thus ${\Jn{5}}$ satisfies condition \eqref{eq:Aldous_est}. 

\medskip  \noindent
Thus the proof of Lemma \ref{lem:X_n-tightness} is complete.
\end{proof}

\medskip
\subsection{Proof of Theorem \ref{th:mart-sol_existence}} \label{sec:main_th-proof}

\medskip  
\noindent
By Lemma \ref{lem:X_n-tightness} the set of measures $\bigl\{ \Law (\Xn  ) , n \in \mathbb{N}  \bigr\} $ is tight on the space $(\zcal , \sigma (\tcal ))$ defined by \eqref{eq:Z_cadlag}. 
Let ${\eta }_{n}:= \eta $, $n \in \mathbb{N} $. The set of measures $\bigl\{ \Law ({\eta }_{n} ) , n \in \mathbb{N}  \bigr\} $ is tight on the space  ${M}_{\bar{\mathbb{N} }}([0,T]\times Y)$.
Thus the set $ \bigl\{ \Law ({\eta }_{n}, \Xn ) , n \in \mathbb{N}  \bigr\}$ is tight on  $ {M}_{\bar{\mathbb{N} }}([0,T]\times Y)  \times \zcal $.
By Corollary \ref{cor:Skorokhod_J,B,H} and Remark \ref{rem:separating_maps}, see Appendix \ref{sec:Skorokhod_th}, there exists a subsequence $({n}_{k}{)}_{k\in \mathbb{N} }$, a probability space 
$\bigl( \bar{\Omega }, \bar{\mathcal{F} },\bar{\p }  \bigr) $ and, on this space, 
$ {M}_{\bar{\mathbb{N} }}([0,T]\times Y) \times \zcal  $-valued random variables $({\eta }_{*}, \Xast )$, 
$(  {\bar{\eta }}_{k}, {\bX}_{k})$, $k \in \mathbb{N} $ such that
\begin{itemize}
\item[(i) ] $\Law \bigl( ( {\bar{\eta }}_{k}, {\bX}_{k}) \bigr) = \Law \bigl( ( {\eta }_{{n}_{k}}, {\X }_{{n}_{k}}) \bigr) $ for all $ k \in \mathbb{N} $;
\item[(ii) ] $({ {\bar{\eta }}_{k}, \bX }_{k}) \to ({\eta }_{*}, \Xast )$ in $ {M}_{\bar{\mathbb{N} }}([0,T]\times Y) \times \zcal  $ with probability $1$ on $\bigl( \bar{\Omega }, \bar{\mathcal{F} },\bar{\p }  \bigr) $ as $k \to \infty $;
\item[(iii) ] $ {\bar{\eta }}_{k} (\bar{\omega }) ) = {\eta }_{*}(\bar{\omega }))$ for all $\bar{\omega } \in \bar{\Omega }$.
\end{itemize}
We will denote these sequences again by $\bigl(  ( {\eta }_{n}, \Xn )  {\bigr) }_{ n\in \mathbb{N} }$ and 
$\bigl(  ( {\bar{\eta }}_{n}, \bXn  )  {\bigr) }_{ n\in \mathbb{N} } $.
Moreover, ${\bar{\eta }}_{n}$, $n \in \mathbb{N} $, and ${\eta }_{*}$ are time homogeneous Poisson random measures on $(Y, \ycal )$ with the intensity measure $\mu $, see \cite[Section 9]{Brze+Haus+Raza'18}.
Using the definition of the space $\zcal $, see \eqref{eq:Z_cadlag}, we have
\begin{equation} \label{eq:conv-as_truncated} 
 \bXn \to \Xast  \quad \mbox{ in } \quad  {L}_{w}^{2}(0,T;\Vmath ) \cap 
{L}^{2}(0,T;{\Hmath }_{loc})  \cap \Dmath ([0,T];{\Umath }^{\prime})
  \cap \Dmath ([0,T];{\Hmath }_{w}) \quad \bar{\p } \mbox{-a.s.}
\end{equation}
In particular,
\begin{equation}
\Xast \; \in \; {L}^{2}(0,T;\Vmath ) \cap  \Dmath ([0,T];{\Hmath }_{w}) \cap  \Dmath ([0,T];{\Umath }^{\prime }) .
\label{eq:Xast_in_Z}
\end{equation}
Since the random variables $\bXn $ and $\Xn $ are identically distributed, using Lemma \ref{lem:Hall-MHD_truncated_estimates} we infer that $\bXn $ satisfy the following inequalities.
For every $q\in [2, \infty ) $  
\begin{equation} \label{eq:H_estimate_bar_Xn}
\sup_{n \ge 1 } \bar{\e } \Bigl[ \sup_{0 \le s \le T } \nnorm{\bXn (s)}{\Hmath }{q} \Bigr] 
\; \le \; {C}_{1}(q) .
\end{equation}
and
\begin{equation} \label{eq:V_estimate_bar_Xn}
\sup_{n \ge 1 }  \bar{\e } \Bigl[ \int_{0}^{T} \norm{ \bXn (s)}{\Vmath }{2} \, ds \Bigr] \; \le \; {C}_{2}.
\end{equation}
Let us fix $p \in [2,\infty )$.
We will show that for every $q \in [2,p]$
\begin{equation}
\bar{\e } \Bigl[ \sup_{0 \le s \le T } \nnorm{\Xast (s)}{\Hmath }{q} \Bigr] \; < \; \infty  .
\label{eq:H_estimate_Xast}
\end{equation}
By inequality \eqref{eq:H_estimate_bar_Xn} with $q:=p $ we can choose a further subsequence of
$(\bXn )$ convergent weak star in the space ${L}^{p }(\bar{\Omega };{L}^{\infty }(0,T;\Hmath ))$, and using \eqref{eq:conv-as_truncated}, deduce that 
\begin{equation*} 
\e \Bigl[ \sup_{t \in [0,T]} \nnorm{\Xast (t)}{\Hmath }{p } \Bigr] 
\; \le \; {C}_{1}(p )  .
\end{equation*} 
Let us fix $q \in [1,p )$. Notice that for every $t \in [0,T]$
\[
\nnorm{\Xast  (t)}{\Hmath }{q} \; = \;  {(\nnorm{\Xast  (t)}{\Hmath }{p  })}^{q/{p }}
\; \le \; {\Bigl( \sup_{t\in [0,T]} \nnorm{\Xast  (t)}{\Hmath }{p } \Bigr) }^{q/{p }}  .
\]
Thus, $ \sup_{t\in [0,T]}  \nnorm{\Xast  (t)}{\Hmath }{q}
\le {\Bigl( \sup_{t\in [0,T]} \nnorm{\Xast  (t)}{\Hmath }{p } \Bigr) }^{q/p }$,
and so by the H\"{o}lder inequality    
\begin{equation*}
\begin{split}
& \bar{\mathbb{E}} \Bigl[ \sup_{t\in [0,T]}  \nnorm{\Xast (t)}{\Hmath }{q}  \Bigr]
 \; \le \;  \bar{\mathbb{E}} \Bigl[ {\Bigl( \sup_{t\in [0,T]} \nnorm{\Xast (t)}{\Hmath }{p } \Bigr) }^{q/p }  \Bigr] \\
& \qquad  \le \;  {\biggl( \bar{\mathbb{E}} \Bigl[  \sup_{t\in [0,T]} \nnorm{\Xast (t)}{\Hmath }{p }   \Bigr]  \biggr) }^{q/p }
\;  \le \;  {C}_{1}(p,q),
 \end{split}
\end{equation*}   
where
${C}_{1}(p ,q):={\bigl( {C}_{1}(p ) \bigr) }^{q/p }$.
Thus \eqref{eq:H_estimate_Xast} holds.

\medskip  \noindent
By inequality \eqref{eq:V_estimate_bar_Xn}, we infer the sequence  $(\bXn )$ contains further subsequence, denoted again by $(\bXn )$, convergent weakly in the space ${L}^{2}([0,T]\times \bar{\Omega };\Vmath )$. Since by 
\eqref{eq:conv-as_truncated}, $\bXn \to \Xast $ in $\zcal $, we infer that 
$\Xast  \in {L}^{2}([0,T]\times \bar{\Omega };\Vmath )$, i.e. 
\begin{equation} \label{eq:V_estimate_Xast}
\bar{\e } \Bigl[ \int_{0}^{T} \norm{\Xast }{\Vmath }{2} \, ds  \Bigr] \;  < \; \infty .
\end{equation}  

\medskip  
\noindent
We will prove that the system $(\bar{\Omega },\bar{\mathcal{F}}, \bar{\mathbb{F}}, \bar{\p }, {\eta }_{\ast },\Xast )$ is a martingale solution  of problem \eqref{eq:Hall-MHD_functional}.

\medskip  
\noindent
\bf Step 1. \rm
Let us   fix $\varphi \in \Vast $. Analogously to  \cite{EM'14}, let us denote
\begin{equation}
\begin{split}
& {\Lambda }_{n}( \bXn ,{\bar{\eta }}_{n}, \varphi) (t)
\; := \;  \ilsk{ \bXn (0)}{\Pn \varphi}{\Hmath  }
  - \int_{0}^{t} \dual{  \mathcal{A} \bXn (s)}{\Pn \varphi}{}  ds
  \\
& - \; \int_{0}^{t} \dual{\MHD (\bXn (s))}{\Pn \varphi}{}  ds 
- \; \int_{0}^{t} \dual{\tHall (\bXn (s))}{\Pn \varphi}{}  ds 
\\
 & + \; \int_{0}^{t} \int_{Y} \ilsk{F(s,\bXn ({s}^{-});y) }{\Pn \varphi }{\Hmath } 
\,  {\tilde{\bar{\eta }}}_{n}(ds, dy ) , \quad t \in [0,T] ,
\end{split} \label{eq:Lambda_n_un_truncated}
\end{equation}
and 
\begin{equation}
\begin{split}
&  \Lambda  (\Xast  , \bar{\eta }, \varphi) (t)
\; := \;   \ilsk{\Xast (0)}{\varphi}{{\Hmath }}
    - \int_{0}^{t} \dual{ \mathcal{A} \Xast (s)}{\varphi}{}  ds 
\\ 
&- \int_{0}^{t} \dual{ \MHD (\Xast (s))}{\varphi}{}  ds 
 - \int_{0}^{t} \dual{ \tHall (\Xast (s))}{\varphi}{}  ds 
 \\
&  +  \; \int_{0}^{t} \int_{Y} \ilsk{F(s,\Xast  ({s}^{-});y) }{ \varphi }{\Hmath } 
\,  {\tilde{\eta }}_{\ast }(ds, dy ) ,
\quad t \in [0,T] .
\end{split} \label{eq:Lambda_Hall-MHD}
\end{equation}  

\medskip  \noindent
In the following  lemma we will prove that each term on the r.h.s. of \eqref{eq:Lambda_n_un_truncated} is convergent to the corresponding term on the r.h.s of \eqref{eq:Lambda_Hall-MHD}.

\medskip
\begin{lemma}  \label{lem:convergence_existence_truncated}
We have
\begin{description}
\item[(a)] for every $\varphi \in \Hmath $
$$\lim_{n\to \infty } \bar{\e } \Bigl[ \int_{0}^{T} 
{|\ilsk{\bXn (t)}{\Pn \varphi}{{\Hmath } } - \ilsk{\Xast (t)}{\varphi}{\Hmath  }|}^{2} \, dt \Bigr] =0 ,$$
\item[(b)] for every $\varphi \in \Hmath  $ 
$$\lim_{n\to \infty } \bar{\e } \bigl[ {|\ilsk{\bXn (0)}{\Pn \varphi}{\Hmath } 
- \ilsk{\Xast (0)}{\varphi}{\Hmath }|}^{2}  \bigr] \; = \; 0 ,$$
\item[(c)] for every $\varphi \in \Vmath  $ 
$$ \lim_{n\to \infty } \bar{\e } \Bigl[ \int_{0}^{T}
 \Bigl| \int_{0}^{t} (\dual{\mathcal{A} \bXn (s)}{\Pn \varphi}{}
- \dual{ \mathcal{A} \Xast (s)}{\varphi}{} ) \,ds \Bigr| \, dt \Bigr] \; = \; 0 ,$$
\item[(d)] for every $\varphi \in \Vast $
 $$\lim_{n\to \infty } \bar{\e } \Bigl[ \int_{0}^{T}
 \Bigl| \int_{0}^{t} (\dual{ \MHD (\bXn (s))}{\Pn \varphi}{}
- \dual{ \MHD (\Xast (s))}{\varphi}{}) \,ds \Bigr| \, dt \Bigr] \; = \; 0 ,$$
\item[(e)] for every $\varphi \in \Vast $
$$ \lim_{n\to \infty } \bar{\e } \Bigl[ \int_{0}^{T}
 \Bigl| \int_{0}^{t} (\dual{ \tHall (\bXn (s))}{\Pn \varphi}{}
- \dual{ \tHall (\Xast (s))}{\varphi}{} ) \,ds \Bigr| \, dt \Bigr] \; = \; 0 ,$$
\item[(f)] for every $\varphi \in \Hmath $
\[
\begin{split} 
&\lim_{n\to \infty } \bar{\e } \Bigl[ \int_{0}^{T} 
\Bigl|  \int_{0}^{t}  \int_{Y}  
\dual{ \Pn F(s, \bXn ({s}^{-});y)}{\varphi }{}  \, {\tilde{\bar{{\eta }}}}_{n} (ds,dy)
\\
&\qquad \qquad   - \int_{0}^{t} \int_{Y}  
\dual{ F(s,{\X }_{\ast }({s}^{-});y)}{\varphi}{}  \, {\tilde{{\eta }}}_{\ast } (ds,dy) {\Bigr| }^{2}   
 \, dt \Bigr] \; = \; 0 .
\end{split}
\]  
\end{description}
\end{lemma}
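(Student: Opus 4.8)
The plan is to prove each of (a)--(f) in two stages: first I would establish $\bar{\p}$-almost sure convergence by applying, pathwise in $\bar{\omega}$, the deterministic convergence results already available, and then upgrade to convergence of the $\bar{\e}$-expectations by a uniform integrability (Vitali) argument resting on the moment bounds \eqref{eq:H_estimate_bar_Xn}--\eqref{eq:V_estimate_Xast}. I will repeatedly use that $\Pn$ is the $\ilsk{\cdot}{\cdot}{\Hmath}$-orthogonal projection, so that $\ilsk{\Pn\psi}{\varphi}{\Hmath}=\ilsk{\psi}{\Pn\varphi}{\Hmath}$, together with the strong convergences $\Pn\varphi\to\varphi$ in $\Hmath$, in $\Vmath$ and in $\Vast$ furnished by Corollary \ref{cor:P_n-pointwise_conv}. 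For (a) I would split $\ilsk{\bXn(t)}{\Pn\varphi}{\Hmath}-\ilsk{\Xast(t)}{\varphi}{\Hmath}=\ilsk{\bXn(t)}{\Pn\varphi-\varphi}{\Hmath}+\ilsk{\bXn(t)-\Xast(t)}{\varphi}{\Hmath}$. The first summand is bounded by $\nnorm{\bXn(t)}{\Hmath}{}\,\nnorm{\Pn\varphi-\varphi}{\Hmath}{}$ and, after integration in $t$ and expectation, tends to $0$ by \eqref{eq:H_estimate_bar_Xn}; the second converges for a.e. $t$ and $\bar{\p}$-a.s. by the convergence $\bXn\to\Xast$ in $\Dmath([0,T];{\Hmath}_w)$ contained in \eqref{eq:conv-as_truncated}, the exchange with $\bar{\e}$ following from Vitali's theorem, the required uniform integrability coming from \eqref{eq:H_estimate_bar_Xn} with $q=4$ and \eqref{eq:H_estimate_Xast}. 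Statement (b) is essentially deterministic: since $\bXn(0)=\Pn{\X}_0$ and $\Pn{\X}_0\to{\X}_0=\Xast(0)$ in $\Hmath$, both sides converge without recourse to probability.

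For the deterministic drift terms I would always peel off the projection error. In (c) I write the integrand as $\dirilsk{\bXn(s)}{\Pn\varphi-\varphi}{}+\dirilsk{\bXn(s)-\Xast(s)}{\varphi}{}$; the first piece vanishes via $\Pn\varphi\to\varphi$ in $\Vmath$ and \eqref{eq:V_estimate_bar_Xn}, while $\int_0^t\dirilsk{\bXn(s)-\Xast(s)}{\varphi}{}\,ds\to0$ pointwise in $t$, $\bar{\p}$-a.s., because $\bXn\to\Xast$ weakly in $L^2(0,T;\Vmath)$ and $s\mapsto\ind{[0,t]}(s)\,\varphi$ defines an element of $L^2(0,T;\Vmath)$. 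In (d) and (e) I split $\dual{\MHD(\bXn(s))}{\Pn\varphi}{}$ (resp. the $\tHall$ term) into $\dual{\MHD(\bXn(s))}{\Pn\varphi-\varphi}{}$ and $\dual{\MHD(\bXn(s))}{\varphi}{}$; the projection error is controlled by Lemma \ref{lem:MHD-term_properties}(iii) (resp. Lemma \ref{lem:tHall-term_properties}(iii)) together with $\Pn\varphi\to\varphi$ in $\Vast$ and the estimates \eqref{eq:H_estimate_bar_Xn}, \eqref{eq:V_estimate_bar_Xn}, whereas the principal part converges pointwise in $t$ and $\bar{\p}$-a.s. by Corollary \ref{cor:MHD-map_conv-aux} (resp. Corollary \ref{cor:tHall-term_conv_general}), whose hypotheses are precisely the a.s. convergences $\bXn\to\Xast$ in $L^2(0,T;{\Hmath}_{loc})$, the weak convergence in $L^2(0,T;\Vmath)$ and the boundedness in $L^2(0,T;\Hmath)$ recorded in \eqref{eq:conv-as_truncated}--\eqref{eq:V_estimate_bar_Xn}. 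In each of (c)--(e) the passage to expectations is by Vitali's theorem: Cauchy--Schwarz in $t$ combined with the bilinear bounds and the moment estimates (notably \eqref{eq:H_estimate_bar_Xn} with $q=4$ and \eqref{eq:HV_estimate_truncated}) shows that the integrands are bounded in $L^2(\bar{\Omega})$, hence uniformly integrable.

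The main obstacle is (f). The decisive simplification is property (iii) of the Skorokhod construction, namely ${\bar{\eta}}_n=\eta_{\ast}$ $\bar{\p}$-a.s., so that both stochastic integrals are taken with respect to the \emph{same} compensated measure $\tilde{\eta}:=\eta_{\ast}-\ell\otimes\mu$, and the difference in (f) equals $\int_0^t\int_Y g_n(s,y)\,\tilde{\eta}(ds,dy)$ with $g_n(s,y):=\ilsk{F(s,\bXn(s^-);y)}{\Pn\varphi}{\Hmath}-\ilsk{F(s,\Xast(s^-);y)}{\varphi}{\Hmath}$, where I have used the self-adjointness of $\Pn$. On the new stochastic basis $\bXn$ and $\Xast$ are adapted and c\`{a}dl\`{a}g, so $g_n$ is predictable and, by the linear growth \eqref{eq:F_linear_growth} and the moment bounds, lies in ${\mathfrak{L}}^2_{\mu,T}$. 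Hence Tonelli in $t$ and the isometry \eqref{eq:isometry} reduce (f) to proving that $\bar{\e}[\int_0^T\int_Y|g_n(s,y)|^2\,\mu(dy)\,ds]\to0$.

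To this end I would split $g_n=\ilsk{F(s,\bXn(s^-);y)}{\Pn\varphi-\varphi}{\Hmath}+\ilsk{F(s,\bXn(s^-);y)-F(s,\Xast(s^-);y)}{\varphi}{\Hmath}$. The first part is dominated, via \eqref{eq:F_linear_growth} with $q=2$ and $\Pn\varphi\to\varphi$ in $\Hmath$, by $\nnorm{\Pn\varphi-\varphi}{\Hmath}{2}\,{K}_2\,\bar{\e}[\int_0^T(1+\nnorm{\bXn(s)}{\Hmath}{2})\,ds]$, which tends to $0$. The second part is where the unboundedness of ${\mathbb{R}}^3$ is felt: we control $\bXn\to\Xast$ only in $L^2(0,T;{\Hmath}_{loc})$ and not in $L^2(0,T;\Hmath)$, so the Lipschitz bound \eqref{eq:F_Lipschitz_cond} cannot be used directly. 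This is exactly remedied by Assumption (F.3), i.e. Remark \ref{rem:F_properties}(ii): the continuity of $\tilde{F}_{\varphi}$ from $L^2(0,T;{\Hmath}_{loc})$ into $L^2([0,T]\times Y,d\ell\otimes\mu;\mathbb{R})$ yields, for $\bar{\p}$-a.e. $\bar{\omega}$, that $\int_0^T\int_Y|\ilsk{F(s,\bXn(s^-);y)-F(s,\Xast(s^-);y)}{\varphi}{\Hmath}|^2\,\mu(dy)\,ds\to0$. Finally, by \eqref{eq:F_linear_growth} one has $\int_0^T\int_Y|g_n|^2\,\mu(dy)\,ds\le C\,(1+\int_0^T\nnorm{\bXn(s)}{\Hmath}{2}\,ds+\int_0^T\nnorm{\Xast(s)}{\Hmath}{2}\,ds)$; since $\{\int_0^T\nnorm{\bXn(s)}{\Hmath}{2}\,ds\}_n$ is bounded in $L^2(\bar{\Omega})$ by \eqref{eq:H_estimate_bar_Xn} with $q=4$ and the $\Xast$-term is a fixed $\bar{\e}$-integrable majorant by \eqref{eq:H_estimate_Xast} with $q=2$, this family is uniformly integrable, and the established a.s. convergence upgrades to the required convergence of expectations.
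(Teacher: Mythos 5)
Your proposal is correct and follows essentially the same route as the paper's proof: pathwise a.s.\ convergence of each term obtained from the convergence of $\bXn $ to $\Xast $ in $\zcal $ together with Corollary \ref{cor:MHD-map_conv-aux}, Corollary \ref{cor:tHall-term_conv_general} and Remark \ref{rem:F_properties}(ii), upgraded to convergence of expectations by uniform integrability drawn from the moment estimates, with part (f) reduced via ${\bar{\eta }}_{n}={\eta }_{*}$ and the isometry \eqref{eq:isometry} exactly as in the paper. The only deviations are cosmetic — in (a) you peel off $\Pn \varphi -\varphi $ where the paper uses $\Pn \bXn =\bXn $, and in (e) you get $L^2(\bar{\Omega })$-boundedness from \eqref{eq:HV_estimate_truncated} with $q=4$ where the paper interpolates with the Young inequality for $r\in [1,\tfrac{2p}{p+2}]$ — and both versions are valid.
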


\medskip
\begin{proof}
\bf Ad. (a). \rm Let us fix $\varphi \in \Hmath  $. Since 
\[
\ilsk{\bXn (t)}{\Pn \varphi}{\Hmath } = \ilsk{\Pn \bXn (t)}{ \varphi}{\Hmath }  =\ilsk{\bXn (t)}{ \varphi}{\Hmath } ,
\]
and by \eqref{eq:conv-as_truncated} 
$ \bXn \to \Xast  $ in $\Dmath ([0,T];{\Hmath }_{w})$, $\bar{\p}$-a.s., 
we infer that 
$\ilsk{\bXn (\cdot )}{\varphi}{\Hmath } \to \ilsk{\Xast  (\cdot )}{\varphi}{\Hmath } $
in $\Dmath ([0,T];\mathbb{R} )$, $\bar{\p }$-a.s.. Hence, in particular, for almost all $t\in [0,T]$
\[
\lim_{n\to \infty } \ilsk{\bXn (t)}{ \varphi}{\Hmath } \; = \; \ilsk{\Xast (t)}{\varphi}{\Hmath  } ,
\qquad \bar{\p} -a.s..
\]
Since by \eqref{eq:H_estimate_bar_Xn} and \eqref{eq:H_estimate_Xast}, 
 $\sup_{t\in [0,T]} \nnorm{\bXn (t)}{\Hmath }{2} < \infty $ and $\sup_{t\in [0,T]} \nnorm{\Xast (t)}{\Hmath }{2} < \infty $, $\bar{\p }$-a.s.,
using the dominated convergence theorem we infer that 
\begin{equation} \label{eq:Vitali_{u}_{n}_conv}
\lim_{n \to \infty } \int_{0}^{T} {| \ilsk{\bXn (t) -\Xast (t)}{\varphi}{\Hmath } |}^{2} \, dt \; = \; 0  \qquad  \mbox{ $\bar{\p }$-a.s. }.
\end{equation}
Moreover, by the H\"{o}lder inequality, 
for every $n \in \mathbb{N}$ and every $r\in \bigl( 1,  \frac{p}{2}\bigr] $
\begin{equation*}
\begin{split}
& \bar{\e }\Bigl[ \Bigl|  \int_{0}^{T} \nnorm{ \bXn (t) -\Xast (t) }{\Hmath }{2} \, dt  {\Bigr| }^{r}\Bigr] 
\; \le \;  {T}^{r-1} \, {2}^{2r-1}
 \bar{\e }\Bigl[  \int_{0}^{T} \bigl( \nnorm{\bXn (t)}{\Hmath }{2r}
+ \nnorm{\Xast (t)}{\Hmath }{2r} \bigr) \, dt \Bigr]  \\
&  \le \; {T}^{r-1} \, {2}^{2r-1} \, T \, 
\bar{\e }\Bigl[ \sup_{t\in [0,T]} \nnorm{ \bXn (t)}{\Hmath }{2r} + \sup_{t\in [0,T]} \nnorm{\Xast (t)}{\Hmath }{2r} \Bigr] .
\end{split}  
\end{equation*} 
Thus by \eqref{eq:H_estimate_bar_Xn} and \eqref{eq:H_estimate_Xast}, we infer that for every  
$r\in \bigl( 1,  \frac{p}{2}\bigr] $
\begin{equation}
\begin{split}
& \sup_{n \in \mathbb{N} }\bar{\e }\Bigl[ \Bigl|  \int_{0}^{T} \nnorm{ \bXn (t) -\Xast (t) }{\Hmath }{2} \, dt  {\Bigr| }^{r}\Bigr]  \; < \; \infty .
\end{split}
 \label{eq:Vitali_{u}_{n}_est}
\end{equation} 
Finally, by \eqref{eq:Vitali_{u}_{n}_conv}, \eqref{eq:Vitali_{u}_{n}_est} and the Vitali theorem we infer that
\begin{equation*}
\lim_{n\to \infty } \bar{\e } 
\Bigl[ \int_{0}^{T} {| \ilsk{\bXn (t) -\Xast (t)}{\varphi }{\Hmath } | }^{2} \, dt \Bigr]  \; = \; 0.
\end{equation*}
The proof of assertion (a) is thus complete.

\medskip  \noindent
\bf Ad. (b). \rm Let us fix $\varphi \in \Hmath $.
Since by \eqref{eq:conv-as_truncated} $\bXn  \to \Xast $ in $\Dmath (0,T;{\Hmath }_{w})$ $\bar{\p }$-a.s. and $\Xast $ is right-continuous at $t=0$, we infer that
$ \ilsk{\bXn (0)}{\varphi}{\Hmath } \to \ilsk{\Xast (0)}{\varphi}{\Hmath } $, $\bar{\p }$-a.s.
By \eqref{eq:H_estimate_bar_Xn}, \eqref{eq:H_estimate_Xast}  and the Vitali theorem, we have
\[
\lim_{n\to \infty } \bar{\e } \bigl[ {|\ilsk{\bXn (0)-\Xast (0)}{\varphi}{\Hmath } |}^{2} \bigr] \; = \; 0,
\]
which completes the proof of (b).

\medskip
\noindent
\bf Ad. (c). \rm  Let us fix $\varphi \in \Vmath $.
By \eqref{eq:A_acal_rel}, we have
\[
\int_{0}^{t} \dual{ \mathcal{A} \bXn (\sigma )}{\Pn \varphi}{} \, d\sigma  
\; = \;  \int_{0}^{t} \dirilsk{\bXn (\sigma )}{\Pn \varphi}{} \, d\sigma  .
\]
By \eqref{eq:conv-as_truncated}, $\bXn  \to \Xast $ in ${L}_{w}^{2}(0,T;\Vmath )$, 
$\bar{\p }$-a.s.
Moreover, since $\varphi \in \Vmath  $, we infer that $\Pn \varphi \to \varphi $ in $\Vmath $, 
(see  Corollary \ref{cor:P_n-pointwise_conv}).
Thus 
\begin{equation} 
\begin{split}
& \lim_{n \to \infty } \int_{0}^{t} \dirilsk{\bXn (\sigma )}{\Pn \varphi}{} \, d\sigma   
\; = \; \int_{0}^{t} \dirilsk{ \Xast (\sigma )}{\varphi}{} \, d\sigma  
\; = \; \int_{0}^{t} \dual{\mathcal{A} \Xast (\sigma )}{\varphi}{} \, d\sigma , \qquad \mbox{$\bar{\p }$-a.s.} 
\end{split} \label{eq:Vitali_{A}_{n}_conv}
\end{equation}
Using \eqref{eq:A_acal_rel}, the H\"{o}lder inequality  we obtain the following inequality
 for all $t \in [0,T]$ and  $n \in \mathbb{N} $
\begin{equation*}
\begin{split} 
& \bar{\e }\Bigl[ \Bigl | \int_{0}^{t}  \dual{  \mathcal{A} \bXn (s)}{\Pn \varphi}{} \, ds {\Bigr| }^{2} \Bigr]
\\
&
 \le \;  \norm{\Pn \varphi }{}{2} \cdot \bar{\e } \Bigl[ \Bigl( \int_{0}^{t} \norm{\bXn }{}{}\, ds {\Bigr) }^{2} \Bigr] 
\; \le \;  \nnorm{\Pn }{\mathcal{L} (\Vmath ,\Vmath )}{2} \, \norm{ \varphi}{}{2}\, T\cdot  \bar{\e } 
 \Bigl[  \int_{0}^{T} \norm{ \bXn (s)}{\Vmath }{2} \, ds  \Bigr] .
\end{split} 
\end{equation*}
Thus by  \eqref{eq:V_estimate_bar_Xn} we infer that for all $t\in [0,T]$
\begin{equation}
\begin{split} 
& \sup_{n \in \mathbb{N} } \, \bar{\e }\Bigl[ \Bigl | \int_{0}^{t}  \dual{  \mathcal{A} \bXn (s)}{\Pn \varphi}{} \, ds {\Bigr| }^{2} \Bigr]
\; \le \;  T \, \sup_{n\in \mathbb{N} } \nnorm{\Pn }{\mathcal{L} (\Vmath ,\Vmath )}{2} \, \norm{ \varphi}{\Vmath }{2} \cdot {C}_{2} 
\; < \;  \infty .
\end{split} \label{eq:Vitali_{A}_{n}_est}
\end{equation}
Therefore by \eqref{eq:Vitali_{A}_{n}_conv}, \eqref{eq:Vitali_{A}_{n}_est}, \eqref{eq:V_estimate_Xast} and the Vitali theorem we infer that for all $t \in [0,T]$
\[
\lim_{n\to \infty } \bar{\e } \Bigl[ \Bigl| \int_{0}^{t} [
\dual{ \mathcal{A} \bXn (s) }{\Pn \varphi}{} -  \dual{\mathcal{A} \Xast (s)}{\varphi}{}] \, ds 
 \Bigr|  \Bigr] \; = \; 0.
\]
Hence by  the dominated convergence theorem and the Fubini theorem
\begin{equation} \label{eq:{A}_{n}_conv}
\lim_{n\to \infty } \bar{\e } \Bigl[ \int_{0}^{T} \Bigl| 
\int_{0}^{t} (\dual{\mathcal{A} \bXn (s)}{\Pn \varphi}{}
- \dual{ \mathcal{A} \Xast (s)}{\varphi}{} ) \,ds \Bigr| \, dt \Bigr] \; = \;0 .
\end{equation}
The proof of assertion (c) is thus complete.

\medskip  \noindent
\bf Ad. (d). \rm  
Let us move to the $\MHD $-term. Let us fix $\varphi \in \Vast $.
For every $s\in [0,T]$ we have
\[
\begin{split}
& \dual{ \MHD (\bXn (s))}{\Pn \varphi}{} - \dual{ \MHD (\Xast (s))}{\varphi}{} \\
\; &= \; \dual{ \MHD (\bXn (s))}{\Pn \varphi - \varphi }{} + \dual{ \MHD (\bXn (s)) - \MHD(\Xast (s))}{ \varphi}{}.
\end{split}
\]
By \eqref{eq:conv-as_truncated}, the sequence $(\bXn )$ is $\bar{\p }$-a.s.,  convergent  to $\Xast $ in ${L}^{2}_{w}(0,T;\Vmath )$. Thus  $(\bXn )$ is bounded in ${L}^{2}(0,T;\Vmath )$,  and in particular, by \eqref{eq:Hall-MHD_V-norm}, it is bounded in ${L}^{2}(0,T;\Hmath )$, as well.
Moreover, by \eqref{eq:conv-as_truncated}, $\bXn  \to \Xast $ in 
${L}^{2}(0,T;{\Hmath }_{loc}) $, $\bar{\p }$-a.s.
By Corollary \ref{cor:MHD-map_conv-aux} we infer that $\bar{\p }$-a.s.
for all $t \in [0,T]$
\[
\lim_{n\to \infty } \int_{0}^{t} \dual{ \MHD (\bXn (s)) - \MHD (\Xast (s))}{\varphi}{} \, ds \; = \; 0 .
\]
By \eqref{eq:MHD-map_est-H-H},
we have
\[
\begin{split}
&\Bigl| \int_{0}^{t} \ddual{\Vastprime }{ \MHD (\bXn (s))}{\Pn \varphi - \varphi }{\Vast } \, ds \Bigr|
\; \le \;  \int_{0}^{t} |\ddual{\Vastprime }{ \MHD (\bXn (s))}{\Pn \varphi - \varphi }{\Vast } | \, ds \\
\; &\le \; \norm{\MHD }{}{} \cdot \int_{0}^{t} \nnorm{\bXn (s)}{\Hmath }{2}  \, ds 
\cdot \norm{\Pn \varphi - \varphi }{\Vast }{} 
\; \le \; \norm{\MHD }{}{} \cdot \norm{\bXn }{{L}^{2}(0,T;\Hmath )}{2} \cdot \norm{\Pn \varphi - \varphi }{\Vast }{} .
\end{split}
\]
Since $\varphi \in \Vast $ then by Corollary \ref{cor:P_n-pointwise_conv} (ii), $\Pn \varphi \to \varphi $ in $\Vast$, and by the boundedness of the sequence $(\bXn )$ in ${L}^{2}(0,T;\Hmath )$,
we infer that $\bar{\p }$-a.s.
for all $t \in [0,T]$
\[
\lim_{n\to \infty } \int_{0}^{t} \ddual{\Vastprime }{ \MHD (\bXn (s))}{\Pn \varphi - \varphi }{\Vast } \, ds 
\; = \; 0.
\]
Thus for  all $t \in [0,T]$
\begin{equation} 
\label{eq:Vitali_{B}_{n}_conv}
\begin{split}
& \lim_{n\to \infty } \int_{0}^{t} (\dual{ \MHD( \bXn (s))}{\Pn \varphi}{} - \dual{\MHD (\Xast (s))}{\varphi }{} ) \, ds  \; = \; 0 ,
\qquad \mbox{$\bar{\p }$-a.s.} 
\end{split}
\end{equation}
By \eqref{eq:MHD-map_est-H-H} we obtain for all $r\in \bigl[1,\frac{p }{2}\bigr] $,
 $t \in [0,T]$   and  $n \in \mathbb{N} $
\begin{equation*} 
\begin{split}
&  \bar{\e } \Bigl[ \Bigl| \int_{0}^{t} \dual{ \MHD (\bXn (s)) }{\Pn \varphi}{} \, ds {\Bigr| }^{r}  \Bigr] 
\; \le \; 
\norm{\MHD }{}{r} \, \norm{\Pn \varphi}{\Vast}{r} \, 
 \bar{\e }  \Bigl[ \Bigl( \int_{0}^{t} \nnorm{\bXn (s)}{\Hmath }{2}  \, ds {\Bigr) }^{r}  \Bigr] 
 \\
\; &\le \; \norm{\MHD }{}{r} \,  \nnorm{\Pn }{\mathcal{L} (\Vast ,\Vast )}{r} \norm{ \varphi}{\Vast}{r} \, \, {T}^{r} \,  \bar{\e } \Bigl[ \sup_{s\in [0,T]} \nnorm{\bXn (s)}{\Hmath }{2r}   \Bigr] 
 .  
\end{split}
\end{equation*}
Thus by \eqref{eq:H_estimate_bar_Xn} we infer that for all $r\in \bigl[1,\frac{p }{2}\bigr] $,
and  $t \in [0,T]$   
\begin{equation} 
\begin{split}
& \sup_{n \in \mathbb{N} } \bar{\e } \Bigl[ \Bigl| \int_{0}^{t} \dual{ \MHD (\bXn (s)) }{\Pn \varphi}{} \, ds {\Bigr| }^{r}  \Bigr] 
\;  \le \; \norm{\MHD }{}{r} \, \sup_{n\in \mathbb{N} } \nnorm{\Pn }{\mathcal{L} (\Vast ,\Vast )}{r} \norm{ \varphi}{\Vast}{r} \, \, {T}^{r} \, {C}_{1}(2r) < \infty .  
\end{split} \label{eq:Vitali_{B}_{n}_est}
\end{equation}
In view of \eqref{eq:Vitali_{B}_{n}_conv} and \eqref{eq:Vitali_{B}_{n}_est} (with $r\in (1,\frac{p}{2}]$), by the Vitali theorem we 
obtain for all $t\in [0,T]$
\begin{equation} \label{eq:Lebesque_{B}_{n}_conv}
\lim_{n \to \infty } \bar{\e } \Bigl[ \Bigl| 
\int_{0}^{t} (\dual{ \MHD( \bXn (s))}{\Pn \varphi}{} - \dual{\MHD (\Xast (s))}{\varphi }{} ) \, ds 
 {\Bigr| }^{} \Bigr] \; = \; 0 .
\end{equation}
By \eqref{eq:Lebesque_{B}_{n}_conv}, \eqref{eq:Vitali_{B}_{n}_est} (with $r=1$), the dominated convergence theorem and the Fubini theorem, we infer that 
\begin{equation} \label{eq:{B}_{n}_conv}
\lim_{n\to \infty } \bar{\e } \Bigl[ \int_{0}^{T}
\Bigl| \int_{0}^{t} (\dual{ \MHD (\bXn (s))}{\Pn \varphi}{}
- \dual{ \MHD (\Xast (s))}{\varphi}{} )\,ds \Bigr| \, dt \Bigr]  \; = \; 0 .
\end{equation}
The proof of (d) is complete.

\medskip  
\noindent
\bf Ad. (e). \rm 
Let us move to the $\tHall $-term. Let us fix $\varphi \in \Vast $.
For every $s\in [0,T]$ we have
\[
\begin{split}
& \dual{ \tHall (\bXn (s))}{\Pn \varphi}{} - \dual{ \tHall (\Xast (s))}{\varphi}{} \\
\; &= \; \dual{ \tHall (\bXn (s))}{\Pn \varphi - \varphi }{} + \dual{ \tHall (\bXn (s)) - \tHall (\Xast (s))}{ \varphi}{}.
\end{split}
\]
By \eqref{eq:conv-as_truncated}, the sequence $(\bXn )$ is $\bar{\p }$-a.s. convergent to $\Xast $ in ${L}_{w}^{2}(0,T;\Vmath ) \cap {L}^{2}(0,T;{\Hmath }_{loc}) $.
By Corollary \ref{cor:tHall-term_conv_general} 
 we infer that $\bar{\p }$-a.s.
for all $t \in [0,T]$
\[
\lim_{n\to \infty } \int_{0}^{t} \dual{ \tHall (\bXn (s)) - \tHall (\Xast (s))}{\varphi}{} \, ds \; = \; 0 .
\]
By \eqref{eq:tHall-map_est-H-V},
we have
\[
\begin{split}
&\Bigl| \int_{0}^{t} \ddual{\Vastprime }{ \tHall (\bXn (s))}{\Pn \varphi - \varphi }{\Vast } \, ds \Bigr|
\; \le \;  \int_{0}^{t} |\ddual{\Vastprime }{ \tHall (\bXn (s))}{\Pn \varphi - \varphi }{\Vast } | \, ds \\
\; &\le \; \norm{\tHall }{}{} \cdot \int_{0}^{t} \nnorm{\bXn (s)}{\Hmath }{} \, \nnorm{\bXn (s)}{\Vmath }{}  \, ds 
\cdot \norm{\Pn \varphi - \varphi }{\Vast }{} \\
\; &\le \; \norm{\tHall }{}{} \cdot \norm{\bXn }{{L}^{2}(0,T;\Hmath )}{} \, \norm{\bXn }{{L}^{2}(0,T;\Vmath )}{}
\cdot \norm{\Pn \varphi - \varphi }{\Vast }{} .
\end{split}
\]
Since $\varphi \in \Vast $ then by Corollary \ref{cor:P_n-pointwise_conv}  (ii), $\Pn \varphi \to \varphi $ in $\Vast$. By the boundedness of the sequence $(\bXn )$ in ${L}^{2}(0,T;\Vmath )$ (and hence in ${L}^{2}(0,T;\Hmath )$, as well),
we infer that $\bar{\p }$-a.s.
for all $t \in [0,T]$
\[
\lim_{n\to \infty } \int_{0}^{t} \ddual{\Vastprime }{ \tHall (\bXn (s))}{\Pn \varphi - \varphi }{\Vast } \, ds 
\; = \; 0.
\]
Thus for  all $t \in [0,T]$
\begin{equation} 
\label{eq:Vitali_{tHall}_{n}_conv}
\begin{split}
& \lim_{n\to \infty } \int_{0}^{t} (\dual{ \tHall (\bXn (s))}{\Pn \varphi}{} - \dual{\tHall (\Xast (s))}{\varphi }{} ) \, ds  \; = \; 0 ,
\qquad \mbox{$\bar{\p }$-a.s.} 
\end{split}
\end{equation}
We will show that for $r \in [1,\frac{2p}{p +2}] $
\begin{equation}
\begin{split}
& \sup_{n \in \mathbb{N} } \sup_{t\in [0,T]}
 \bar{\e } \Bigl[ \Bigl| \int_{0}^{t}\ddual{\Vastprime }{\tHall (\bXn (s))}{\Pn \varphi }{\Vast } \, ds {\Bigr| }^{r} \Bigr] \; < \; \infty .
\end{split}
\label{eq:Vitali_{tHall}_{n}_est}
\end{equation}
Indeed, by \eqref{eq:tHall-map_est-H-V}, 
we have for each $n \in \mathbb{N} $ and $t \in [0,T]$
\[
\begin{split}
&\Bigl| \int_{0}^{t}\ddual{\Vastprime }{\tHall (\bXn (s))}{\Pn \varphi }{\Vast } \, ds \Bigr|
\; \le \; \int_{0}^{t}|\ddual{\Vastprime }{\tHall (\bXn (s))}{\Pn \varphi }{\Vast } | \, ds
\\
\; &\le \; \norm{\tHall }{}{} \, \sup_{n\in \mathbb{N} } \nnorm{\Pn }{\mathcal{L} (\Vast ,\Vast )}{} \, \norm{\varphi }{\Vast }{} \int_{0}^{t}\nnorm{\bXn (s)}{\Hmath }{} \norm{\bXn (s)}{\Vmath }{} \, ds.
\end{split}
\]
Let $r \in [1,\frac{2p }{p+2}] $.
By the H\"{o}lder inequality
and the Young inequality (for numbers with $\frac{1}{\alpha }=  1-\frac{r}{2}$ and $\frac{1}{\beta } = \frac{r}{2}$), and estimates \eqref{eq:H_estimate_bar_Xn}  and \eqref{eq:V_estimate_bar_Xn} 
we have for each $n \in \mathbb{N} $ and $t \in [0,T]$
\[
\begin{split}
\bar{\e } \Bigl[ \Bigl( \int_{0}^{t} \nnorm{\bXn (s)}{\Hmath }{} \norm{\bXn (s)}{\Vmath }{} \, ds {\Bigr) }^{r} \Bigr]
\; &\le \; 
\frac{1}{\alpha } \, {T}^{r} \,  \bar{\e } \Bigl[ \sup_{s\in [0,T]}\nnorm{\bXn (s)}{\Hmath }{\frac{2r}{2-r}} \Bigr]
+ \frac{1}{\beta } \, {T}^{r-1} \, \bar{\e } \Bigl[ \int_{0}^{T} \norm{\bXn (s)}{\Vmath }{2} \, ds \Bigr]
\\
\; &\le \; \frac{1}{\alpha } \, {T}^{r} \, C_1 \Bigl(\frac{2r}{2-r} \Bigr)
+ \frac{1}{\beta } \, {T}^{r-1} \, {C}_{2}.
\end{split}
\]
Since moreover by Corollary \ref{cor:P_n-pointwise_conv}  (ii), $\sup_{n\in \mathbb{N} } \nnorm{\Pn }{\mathcal{L} (\Vast ,\Vast )}{} <\infty $,
 the proof of \eqref{eq:Vitali_{tHall}_{n}_est} is complete.

\medskip
\noindent
In view of \eqref{eq:Vitali_{tHall}_{n}_conv} and \eqref{eq:Vitali_{tHall}_{n}_est} 
(with $r\in (1,2)$), by the Vitali theorem we 
obtain for all $t\in [0,T]$
\begin{equation} \label{eq:Lebesque_{tHall}_{n}_conv}
\lim_{n \to \infty } \bar{\e } \Bigl[ \Bigl| 
\int_{0}^{t} (\dual{ \tHall ( \bXn (s))}{\Pn \varphi}{} - \dual{\tHall (\Xast (s))}{\varphi }{} ) \, ds 
 {\Bigr| }^{} \Bigr] \; = \; 0 .
\end{equation}
By \eqref{eq:Lebesque_{tHall}_{n}_conv}, \eqref{eq:Vitali_{tHall}_{n}_est} (for $r=1$) the dominated convergence theorem and the Fubini theorem, we infer that 
\begin{equation} \label{eq:{tHall}_{n}_conv}
\lim_{n\to \infty } \bar{\e } \Bigl[ \int_{0}^{T}
\Bigl| \int_{0}^{t} (\dual{ \tHall (\bXn (s))}{\Pn \varphi}{}
- \dual{ \tHall (\Xast (s))}{\varphi}{} )\,ds \Bigr| \, dt \Bigr]  \; = \; 0 .
\end{equation}
The proof of (e) is complete.

\medskip
\noindent
\bf Ad. (f). \rm 
Let us fix $\varphi \in \Hmath $.
By \eqref{eq:F_n},  inequality \eqref{eq:F_linear_growth} in Remark \ref{rem:F_properties}, 
\eqref{eq:H_estimate_bar_Xn} and \eqref{eq:H_estimate_Xast}
for every $t \in [0,T]$  and  $n \in \mathbb{N} $ 
\begin{equation*}
\begin{split}
&\bar{\e } \Bigl[ \int_{0}^{t} \int_{Y} 
{|\ilsk{{F}_{n}(s, \bXn ({s}^{-});y)}{\varphi}{\Hmath }|}^{2} \, d\mu (y)ds  \Bigr]
\; \le \; \nnorm{\varphi }{\Hmath }{2} \bar{\e } \Bigl[ \int_{0}^{t} \int_{Y} 
\nnorm{F(s, \bXn ({s}^{-});y)}{\Hmath }{2} \, d\mu (y)ds  \Bigr] \\
\; &\le \; \nnorm{\varphi }{\Hmath }{2}{K}_{2} \bar{\e } \Bigl[   \int_{0}^{t} 
\bigl\{ 1+ \nnorm{\bXn (s)}{\Hmath }{2} \bigr\}  \, ds  \Bigr] 
\; \le \; \nnorm{\varphi }{\Hmath }{2}T{K}_{2} \bigl( 1+ \bar{\e } \bigl[ \sup_{s \in [0,T]} \nnorm{\bXn (s)}{\Hmath }{2}\bigr] \bigr) 
  < \infty ,
\end{split}
\end{equation*}
and
\begin{equation*}
\begin{split}
\bar{\e } \Bigl[  \int_{0}^{t} \int_{Y} 
{|\ilsk{F(s, \Xast  ({s}^{-});y)}{\varphi}{\Hmath }|}^{2} \, d\mu (y)ds  \Bigr] 
\; &\le \; \nnorm{\varphi }{\Hmath }{2} T{K}_{2} \bigl( 1+ \bar{\e } \bigl[ \sup_{s \in [0,T]} \nnorm{\Xast  (s)}{\Hmath }{2}\bigr] \bigr)  < \infty .
\end{split}
\end{equation*}
By the isometry formula \eqref{eq:isometry}
and the fact that ${\bar{\eta }}_{n} = {\eta }_{\ast }$, we have
\begin{equation}
\begin{split}
&\bar{\e } \Bigl[ \Bigl|  \int_{0}^{t}  \int_{Y}  
\dual{ F_n(s, \bXn ({s}^{-});y)}{\varphi }{}  \, {\tilde{\bar{{\eta }}}}_{n} (ds,dy)
- \int_{0}^{t} \int_{Y}  
\dual{ F(s,\Xast ({s}^{-});y)}{\varphi}{}  \, {\tilde{{\eta }}}_{\ast } (ds,dy) {\Bigr| }^{2}  \Bigr] 
\\
\; &= \; \bar{\e } \Bigl[ \Bigl|  \int_{0}^{t} \int_{Y}  
\dual{ F_n(s, \bXn ({s}^{-});y)- F(s,\Xast ({s}^{-});y)}{\varphi}{}  \, {\tilde{{\eta }}}_{\ast } (ds,dy) {\Bigr| }^{2}  \Bigr] 
\\
\; &= \; \bar{\e } \Bigl[ \int_{0}^{t} \int_{Y} 
\Bigl| \dual{F_n (s,\bXn({s}^{-});y) -F(s,\Xast ({s}^{-});y)}{\varphi}{} {\Bigr| }^{2} \, d\mu (y)  ds  \Bigr] .
\end{split}
\label{eq:Lebesgue_{noise}_{n}_equal_H} 
\end{equation}

\medskip  \noindent
We will prove that  for every $t \in [0,T]$ and $\varphi \in \Hmath $
\begin{equation}
\lim_{n\to \infty } \bar{\e } \Bigl[ \int_{0}^{t} \int_{Y} 
\Bigl| \dual{F_n(s,\bXn({s}^{-});y) -F(s,{\X }_{\ast }({s}^{-});y)}{\varphi}{} {\Bigr| }^{2} \, d\mu (y) ds  \Bigr] \; = \; 0 .
\label{eq:Lebesgue_{noise}_{n}_conv_H} 
\end{equation}

\medskip 
\noindent
Indeed, for all $t \in [0,T]$ we have
\begin{equation*}
\begin{split}
&  \int_{0}^{t} \int_{Y} {| \dual{F(s, \bXn ({s}^{-});y)- F(s,\Xast({s}^{-});y)}{\varphi}{} |}^{2} \, d\mu (y)ds   \\
\; & \le \; \int_{0}^{T} \int_{Y} {|\ilsk{F(s, \bXn ({s}^{-});y)- F(s,\Xast({s}^{-});y)}{\varphi}{\Hmath} | }^{2} \, d\mu (y)ds   \\
\; & = \; \int_{0}^{T} \int_{Y} {| {\tilde{F}}_{\varphi}(\bXn )(s,y) - {\tilde{F}}_{\varphi}(\Xast )(s,y)| }^{2} \, d\mu (y)ds   \\
\; & = \; \norm{{\tilde{F}}_{\varphi}(\bXn ) - {\tilde{F}}_{\varphi}(\Xast )}{{L}^{2}([0,T]\times Y;\mathbb{R} )}{2}, 
\end{split}
\end{equation*}
where ${\tilde{F}}_{\varphi}$ is the mapping defined by \eqref{eq:F**}.  
Since $\bar{\p }$-a.s., $\bXn \in {L}^{2}(0,T;\Hmath )$ for $n \in \mathbb{N} $  and,  by \eqref{eq:conv-as_truncated},
$\bXn  \to \Xast $ in ${L}^{2}(0,T;{\Hmath }_{loc}) $, 
using  Remark \ref{rem:F_properties} (ii) 
we infer that for all $t\in [0,T]$ 
\begin{equation}
\lim_{n \to \infty } \int_{0}^{t} \int_{Y} 
{| \ilsk{F(s, \bXn ({s}^{-});y)- F(s,\Xast ({s}^{-});y)}{\varphi}{\Hmath } |}^{2} 
\, d\mu (y)ds  \; = \; 0. 
\label{eq:Vitali_{noise}_{n}_conv}
\end{equation}
Moreover, by inequality \eqref{eq:F_linear_growth} in Remark \ref{rem:F_properties} and by \eqref{eq:H_estimate_bar_Xn},
for every $t \in [0,T]$ every   $r \in [ 1, \frac{p }{2} ] $ 
and every $n \in \mathbb{N} $ the following inequalities hold
\begin{align} 
&  \bar{\e } \Bigl[ \Bigl| \int_{0}^{t} \int_{Y} {| 
\ilsk{F(s, \bXn ({s}^{-});y)- F(s,{\X }_{\ast }({s}^{-});y)}{\varphi}{\Hmath }|}^{2} 
 \, d\mu (y)ds {\Bigr| }^{r} \Bigr] \nonumber \\
\; &\le \; {2}^{r} \nnorm{\varphi}{\Hmath }{2r}\bar{\e } \Bigl[ \Bigl| \int_{0}^{t} \int_{Y} \bigl\{ 
 \nnorm{F(s, \bXn ({s}^{-});y) }{\Hmath }{2}
 +\nnorm{F(s,{\X }_{\ast }({s}^{-});y)}{\Hmath }{2} \bigr\} \, d\mu (y)ds {\Bigr| }^{r} \Bigr] \nonumber \\
\; &\le \; {2}^{r} {K}_{2}^{r} \nnorm{\varphi }{\Hmath }{2r}\, \bar{\e } \Bigl[ \Bigl|  \int_{0}^{t} \bigl\{ 2+ \nnorm{\bXn (s)}{\Hmath }{2} + \nnorm{{\X }_{\ast }(s)}{\Hmath }{2}
\bigr\}  \, ds {\Bigr| }^{r} \Bigr] 
\; \le \; c \bigl( 1+ \bar{\e } \bigl[ \sup_{s \in [0,T]}  \nnorm{\bXn (s)}{\Hmath }{2r}\bigr] \bigr)
 \nonumber \\
\; &\le \; c (1+ {C}_{1}(2r))  
\label{eq:Vitali_{noise}_{n}_est}
\end{align} 
for some constant $c>0$.
Thus by \eqref{eq:Vitali_{noise}_{n}_conv}, \eqref{eq:Vitali_{noise}_{n}_est} 
(for $r \in ( 1, \frac{p }{2} ] $)
 and the Vitali theorem for all $t\in [0,T]$
\begin{equation}
\lim_{n\to \infty } \bar{\e } \Bigl[  \int_{0}^{t} \int_{Y} 
{| \dual{F(s, \bXn ({s}^{-});y)- F(s,\Xast ({s}^{-});y)}{\varphi}{}|}^{2} \, d\mu (y)ds  \Bigr] 
\; = \; 0,
\quad  \varphi \in \Hmath  . 
\label{eq:Lebesgue_{noise}_{n}_conv_vcal}
\end{equation} 
Moreover, by \eqref{eq:F_n} and the fact that $\Pn :\Hmath \to \Hn $  is the $\ilsk{\cdot }{\cdot }{\Hmath }$-projection onto $\Hn$, see Section \ref{sec:truncated_eq}, we infer that also
\begin{equation*}
\lim_{n\to \infty } \bar{\e } \Bigl[  \int_{0}^{t} \int_{Y} 
{| \dual{F_n(s, \bXn ({s}^{-});y)- F(s,\Xast ({s}^{-});y)}{\varphi}{} |}^{2} \, d\mu (y)ds  \Bigr]
 \; = \; 0,
\quad  \varphi \in \Hmath  . 
\end{equation*}
This completes the proof of \eqref{eq:Lebesgue_{noise}_{n}_conv_H}.

\medskip  \noindent
By  \eqref{eq:Lebesgue_{noise}_{n}_conv_H}, \eqref{eq:Vitali_{noise}_{n}_est} (with $r=1$)
 and the dominated convergence theorem, we have for all $\varphi \in \Hmath $
\begin{equation}
\lim_{n\to \infty } \int_{0}^{T}
 \bar{\e } \Bigl[  \int_{0}^{t} \int_{Y} 
{| \dual{F_n(s, \bXn ({s}^{-});y)- F(s,\Xast ({s}^{-});y)}{\varphi}{} |}^{2} \, d\mu (y)ds  \Bigr] \, dt
 \; = \; 0 .  
\label{eq:{noise}_{n}_conv}
\end{equation} 
By \eqref{eq:Lebesgue_{noise}_{n}_equal_H}, \eqref{eq:{noise}_{n}_conv} and the Fubini theorem
\begin{equation}
\begin{split}
&\lim_{n\to \infty } \bar{\e } \Bigl[ \int_{0}^{T} 
\Bigl|  \int_{0}^{t}  \int_{Y}  
\dual{ F_n(s, \bXn ({s}^{-});y)}{\varphi }{}  \, {\tilde{\bar{{\eta }}}}_{n} (ds,dy)
\\
& \qquad \qquad - \int_{0}^{t} \int_{Y}  
\dual{ F(s,\Xast ({s}^{-});y)}{\varphi}{}  \, {\tilde{{\eta }}}_{\ast } (ds,dy) {\Bigr| }^{2}   
 \, dt \Bigr] \; = \; 0 .
\end{split}
\label{eq:{noise}_{n}_conv_final}
\end{equation} 
This concludes the proof of (f) and of Lemma \ref{lem:convergence_existence_truncated}.
\end{proof}

\medskip
\noindent
Directly from  Lemma \ref{lem:convergence_existence_truncated} we get the following corollary

\medskip
\begin{cor}
For every $\varphi \in \Vast $,
\begin{equation} \label{eq:un_convergence_truncated}
\lim_{n \to \infty } \norm{\ilsk{\bXn(\cdot )}{\varphi}{{\Hmath } }
- \ilsk{\Xast (\cdot )}{\varphi}{{\Hmath } } }{{L}^{2}([0,T]\times \bar{\Omega })}{} \; = \; 0
\end{equation}
and
\begin{equation} \label{eq:Lambda_n_un_convergence_truncated}
\lim_{n \to \infty } \norm{  {\Lambda  }_{n}(\bXn , {\bar{\eta }}_{n},\varphi)
- \Lambda  (\Xast ,{\eta }_{\ast }, \varphi) }{{L}^{1}([0,T]\times \bar{\Omega })}{} \; = \; 0 .
\end{equation}
\end{cor}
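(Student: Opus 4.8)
The plan is to obtain both limits by assembling the six convergences (a)--(f) established in Lemma \ref{lem:convergence_existence_truncated}; since $\Vast \subset \Hmath$ by \eqref{eq:spaces}, every part of that lemma applies to the fixed $\varphi \in \Vast$. I would first dispose of \eqref{eq:un_convergence_truncated}. Because $\Pn : \Hmath \to \Hn$ is the $\ilsk{\cdot}{\cdot}{\Hmath}$-orthogonal projection and $\bXn(t) \in \Hn$, one has $\ilsk{\bXn(t)}{\Pn\varphi}{\Hmath} = \ilsk{\Pn\bXn(t)}{\varphi}{\Hmath} = \ilsk{\bXn(t)}{\varphi}{\Hmath}$ for every $t$, so that
\[
\norm{\ilsk{\bXn(\cdot)}{\varphi}{\Hmath} - \ilsk{\Xast(\cdot)}{\varphi}{\Hmath}}{L^2([0,T]\times\bar{\Omega})}{2} = \bar{\e}\Bigl[\int_0^T |\ilsk{\bXn(t)}{\Pn\varphi}{\Hmath} - \ilsk{\Xast(t)}{\varphi}{\Hmath}|^2\,dt\Bigr].
\]
The right-hand side tends to $0$ by part (a) of Lemma \ref{lem:convergence_existence_truncated}, which is precisely \eqref{eq:un_convergence_truncated}.

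For \eqref{eq:Lambda_n_un_convergence_truncated}, I would subtract \eqref{eq:Lambda_Hall-MHD} from \eqref{eq:Lambda_n_un_truncated} and split the difference $\Lambda_n(\bXn, {\bar{\eta }}_{n}, \varphi) - \Lambda(\Xast, {\eta }_{\ast}, \varphi)$ into its five natural summands: the (time-constant) initial term, the $\mathcal{A}$-term, the $\MHD$-term, the $\tHall$-term, and the compensated stochastic integral. For the last summand I would record the identity $\ilsk{F(s,\bXn(s^{-});y)}{\Pn\varphi}{\Hmath} = \ilsk{\Pn F(s,\bXn(s^{-});y)}{\varphi}{\Hmath} = \dual{F_n(s,\bXn(s^{-});y)}{\varphi}{}$, using self-adjointness of $\Pn$ together with the definition \eqref{eq:F_n} of $F_n$, so that this summand coincides exactly with the quantity handled in part (f). By the triangle inequality in $L^1([0,T]\times\bar{\Omega})$ it then suffices to send each summand to $0$ in that norm.

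The three deterministic integral summands converge to $0$ in $L^1([0,T]\times\bar{\Omega})$ directly by parts (c), (d) and (e), respectively. The initial summand is constant in $t$, so its $L^1([0,T]\times\bar{\Omega})$ norm equals $T\,\bar{\e}[|\ilsk{\bXn(0)}{\Pn\varphi}{\Hmath} - \ilsk{\Xast(0)}{\varphi}{\Hmath}|]$, which by the Cauchy--Schwarz inequality is bounded by $T\,(\bar{\e}[|\ilsk{\bXn(0)}{\Pn\varphi}{\Hmath} - \ilsk{\Xast(0)}{\varphi}{\Hmath}|^2])^{1/2}$ and hence tends to $0$ by part (b). Finally, the stochastic summand converges to $0$ in $L^2([0,T]\times\bar{\Omega})$ by part (f), and since $[0,T]\times\bar{\Omega}$ carries a finite measure, Cauchy--Schwarz upgrades this to $L^1([0,T]\times\bar{\Omega})$-convergence. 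Summing the five estimates yields \eqref{eq:Lambda_n_un_convergence_truncated}.

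There is no genuine obstacle here, as the corollary is a direct reassembly of Lemma \ref{lem:convergence_existence_truncated}; the only points requiring a word of justification are the self-adjointness identity $\ilsk{\cdot}{\Pn\varphi}{\Hmath} = \ilsk{\Pn\cdot}{\varphi}{\Hmath}$ used to match the projected test function $\Pn\varphi$ with parts (a) and (f), and the elementary passage from $L^2$- to $L^1$-convergence on the finite measure space $[0,T]\times\bar{\Omega}$ needed for the initial and stochastic summands.
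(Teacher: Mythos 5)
Your proposal is correct and follows essentially the same route as the paper: identity \eqref{eq:un_convergence_truncated} is read off from part (a) of Lemma \ref{lem:convergence_existence_truncated} after rewriting the $L^2([0,T]\times\bar{\Omega})$ norm via Fubini, and \eqref{eq:Lambda_n_un_convergence_truncated} follows by splitting $\Lambda_n - \Lambda$ into its five summands and applying parts (b)--(f) termwise. The paper is terser (it simply notes each term converges ``at least in $L^1$''), whereas you make explicit the Cauchy--Schwarz upgrade from $L^2$ to $L^1$ on the finite measure space and the projection identity $\ilsk{\cdot}{\Pn\varphi}{\Hmath}=\ilsk{\Pn\cdot}{\varphi}{\Hmath}$; these are harmless elaborations of the same argument.
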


\begin{proof}
Assertion  \eqref{eq:un_convergence_truncated} follows from the  equality
\[
\norm{\ilsk{\bXn (\cdot )}{\varphi}{{\Hmath } }
- \ilsk{\Xast  (\cdot )}{\varphi}{{\Hmath } } }{{L}^{2}([0,T]\times \bar{\Omega })}{2} \\
\; = \;  \bar{\e } \Bigl[ \int_{0}^{T}
{| \ilsk{ \bXn (t) -\Xast (t)}{\varphi}{{\Hmath } } |}^{2} \, dt \Bigr]
\]
and Lemma \ref{lem:convergence_existence_truncated}  (a).
To prove  \eqref{eq:Lambda_n_un_convergence_truncated} let us note
that by the Fubini theorem, we have
\begin{align*}
& \norm{  {\Lambda  }_{n}( \bXn , {\bar{\eta }}_{n},\varphi)
- \Lambda  (\Xast ,{\eta }_{\ast }, \varphi) }{{L}^{1}([0,T]\times \bar{\Omega })}{} \\
\; &= \;  \int_{0}^{T} \bar{\e } \bigl[ {| {\Lambda  }_{n}( \bXn, {\bar{\eta }}_{n},\varphi)(t)
- \Lambda  (\Xast , {\eta }_{\ast } ,\varphi )(t) |}^{}\, \bigr] dt .
 \end{align*}
To complete the proof of \eqref{eq:Lambda_n_un_convergence_truncated} it is sufficient to note that
by Lemma \ref{lem:convergence_existence_truncated} (b)-(f),  each  term on the right hand side of
\eqref{eq:Lambda_n_un_truncated} tends at least in ${L}^{1}([0,T]$ $\times \bar{\Omega })$ to the corresponding term in \eqref{eq:Lambda_Hall-MHD}.
\end{proof}

\medskip  \noindent
\bf Step 2. \rm Since $\Xn $ is a solution of the truncated equation, for all $t\in [0,T]$ and $\varphi \in \Vast $
\[
\ilsk{\Xn (t)}{\varphi}{{\Hmath } } \; = \; {\Lambda  }_{n} (\Xn ,{\eta }_{},\varphi )(t) , \qquad \p \mbox{-a.s.}
\]
In particular,
\[
\int_{0}^{T} \e \bigl[ {|  \ilsk{\Xn (t)}{\varphi}{{\Hmath } }
 - {\Lambda  }_{n} (\Xn ,{\eta }_{},\varphi)(t) |}^{} \, \bigr] \, dt  \; = \; 0.
\]
Since $\Law (\Xn ,{\eta }_{}) = \Law (\bXn ,{\bar{\eta }}_{n})$,
using \eqref{eq:un_convergence_truncated} and \eqref{eq:Lambda_n_un_convergence_truncated} we infer that
\[
\int_{0}^{T} \bar{\e } \bigl[ {|
\ilsk{\Xast (t)}{\varphi}{{\Hmath } } - {\Lambda  }_{} (\Xast ,{\eta }_{\ast },\varphi)(t) |}^{} \, \bigr] \, dt  =0.
\]
Hence for $\ell $-almost all $t \in [0,T]$ and $\bar{\p }$-almost all $\omega \in \bar{\Omega }$
\begin{equation}
\ilsk{\Xast (t)}{\varphi}{{\Hmath } } - {\Lambda  }_{} (\Xast ,{\eta }_{\ast },\varphi )(t) \; = \; 0, \label{eq:solution_Hall-MHD}
\end{equation}
Since $\Xast $ is ${\zcal }_{T}$-valued  random variable, in particular $\Xast \in \Dmath  ([0,T];{{\Hmath } }_{w})$, 
we infer that
equality \eqref{eq:solution_Hall-MHD} holds  for all $t\in [0,T]$ and all $\varphi \in \Vast   $.
Since $\Vast   $ is dense in $\Vtest  $,   equality \eqref{eq:solution_Hall-MHD} holds for all $\varphi \in \Vtest   $, as well.
Putting $\bar{\mathfrak{A}}:=(\bar{\Omega }, \bar{\mathcal{F} },\bar{\p }, \bar{\Fmath })$,
$\bar{\eta} :=  {\eta }_{\ast }$ and $\bX := \Xast  $,
by \eqref{eq:solution_Hall-MHD}, \eqref{eq:Lambda_Hall-MHD}, \eqref{eq:H_estimate_Xast} and \eqref{eq:V_estimate_Xast}
 we infer that the system
$(\bar{\mathfrak{A}}, {\eta }_{\ast }, \Xast )$ is a martingale solution of  equation \eqref{eq:Hall-MHD_functional} satisfying inequality \eqref{eq:mart-sol_energy-ineq_exist}.
The proof of Theorem \ref{th:mart-sol_existence} is thus complete. \qed


\medskip
\section{Some generalizations} \label{sec:general}

\medskip  \noindent
Let us consider the  stochastic Hall-magnetohydrodynamics system on $[0,T] \times {\mathbb{R} }^{3}$ with the L\'{e}vy  noise terms 
\begin{align}
 d\ubold  +& \Bigl[ (\ubold \cdot \nabla  ) \ubold + \nabla p 
-  \,(\Bbold \cdot \nabla ) \Bbold + \, \nabla \Bigl( \frac{{|\Bbold |}^{2}}{2} \Bigr) 
- {\nu }_{1} \, \Delta \ubold \Bigr] \, dt \notag \\
\; &= \; {f}_1 (t) + \int_{Y_1}{F}_{1}(t,\ubold ({t}^{-});y) \, d{\tilde{\eta }}_{1}(dt,dy)
 + {\Gbold }_{1}(t,\ubold ) \, dW_1(t) ,
\label{eq:Hall-MHD_u_Levy} \\
  d \Bbold  +& \Bigl[ (\ubold \cdot \nabla ) \Bbold - (\Bbold \cdot \nabla ) \ubold 
+ \curl [(\curl \Bbold ) \times \Bbold ]  
- {\nu }_{2} \, \Delta \Bbold \Bigr] \, dt  \notag \\
\; &= \; {f}_2(t)  + \int_{Y_2}{F}_{2}(t,\Bbold ({t}^{-});y) \, d{\tilde{\eta }}_{2}(dt,dy) 
+{\Gbold }_{2}(t,\Bbold ) \, dW_2(t)
\label{eq:Hall-MHD_B_Levy} 
\end{align}
with the incompressibility conditions \eqref{eq:Hall-MHD_incompressibility}, i.e.
\begin{equation*}
\begin{split}
\diver \ubold \; &= \; 0 \quad \mbox{ and } \quad \diver \Bbold \; = \; 0
\end{split}
\end{equation*}
and the initial conditions \eqref{eq:Hall-MHD_ini-cond}, i.e.
\begin{equation*}
\ubold (0) \; = \; {\ubold }_{0} \quad \mbox{ and } \quad \Bbold (0) \; = \; {\Bbold }_{0}.
\end{equation*}

\medskip  \noindent
\begin{assumption}  \label{ass:data+noise_Wiener} \
\begin{description}
\item[(G.1)] ${\Kmath }_{1}, {\Kmath }_{2} $  are separable Hilbert spaces, and
\[
G_i: [0,T] \times V  \to \lhs ({\Kmath }_{i}, H), \qquad i=1,2,
\]
are two measurable map which are Lipschitz continuous, i.e.
there exist constants ${L}_{i}$, $i=1,2$,
such that 
\begin{equation}
\norm{G_i(t,{\phi }_{1}) - G_i(t,{\phi }_{2})}{\lhs ({\Kmath }_{i}, H)}{2} 
\; \le \; {L}_{i}\, \norm{{\phi }_1-{\phi }_{2}}{V}{2}  ,
   \qquad {\phi }_{1}, {\phi }_{2} \in V  , \, \,  t \in [0,T] .
\label{eq:G_i_Lipschitz}   
\end{equation} 
In addition, there exist ${\lambda }_{i}$, ${\varrho }_{i} \in \mathbb{R} $ and 
$a \in (0,2]$  such that
\begin{equation} \label{eq:G_i}
\norm{G_i(t,\phi  )}{\lhs ({\Kmath }_{i},H)}{2}
\; \le \; {\nu }_{i}(2- a ) \nnorm{\nabla \phi }{L^2}{2} +{\lambda }_{i} \nnorm{\phi }{H}{2} +{\varrho }_{i}  , \quad (t,\phi ) \in [0,T] \times V  .
\end{equation}
\item[(G.2)] The maps $G_i$, $i=1,2$, can be extended to  measurable maps
\[ 
g_i :[0,T] \times H \to \mathcal{L}  ( {\Kmath }_{i},  {V}^{\prime }) 
\]
such that for some $C_i>0$
\begin{equation} \label{eq:G_i*}
\sup_{\psi \in V ,\norm{\psi }{V}{} \le 1 } \sup_{y\in {\Kmath }_{i},\norm{y}{{\Kmath }_{i}}{} \le 1}
{|\ddual{V'}{g(t,\phi )(y)}{\psi }{V}|}^{2} 
 \le C_i (1 + \nnorm{\phi }{H}{2}) , \qquad (t,\phi ) \in [0,T] \times H  . 
\end{equation}
\item[(G.3)]
Moreover, for every $\psi  \in V$ the maps ${\tilde{g_i}}_{\psi }$ defined for $\phi \in {L}^{2}(0,T;H) $  by
\begin{equation} \label{eq:G_i**}
 \bigl( { \tilde{g_i} }_{\psi }(\phi )\bigr)  (t) := 
\{ {\Kmath }_{i} \ni y \mapsto \ddual{V'}{g_i(t,\phi (t))(y)}{\psi }{V} \in \mathbb{R} \} \in  \lhs ({\Kmath }_{i},\mathbb{R} ) ,
\quad t \in [0,T],
\end{equation}
are continuous maps from ${L}^{2}(0,T;{H}_{loc}) $ into $ {L}^{2}(0,T;\lhs ({\Kmath }_{i},\mathbb{R} ) ) $.
\end{description}
\end{assumption}

\noindent
For any Hilbert spaces $\Kmath $ and $Y$ by $\lhs (\Kmath;Y)$ we denote the space of Hilbert-Schmidt operators from $\Kmath$ into  $Y$. 

\medskip
\begin{assumption} \label{assumption-data}  \rm 
We assume also that 
\begin{description}
\item[(H.1)]  $p$ is a real number such that 
\begin{equation} \label{eqn-p_cond}
p \in [2,2+\gamma )  ,
\end{equation}
where
\begin{equation}
\gamma := \begin{cases}
\frac{a}{2-a} , \quad &\mbox{ if }  a \in [0,2) , \\
\infty , \quad &\mbox{ if }  a =2,
\end{cases}
\label{eq:gamma}
\end{equation}
and $a$ is the parameter from inequality \eqref{eq:G_i}. 
\item[(H.2)] ${\X }_{0} := ({\ubold }_{0}, {\Bbold }_{0}) \in H \times H $ and $f:= (f_1,f_2)$, where $f_i \in {L}^{p}(0,T;{V}^{\prime })$ for $i=1,2$.
\item[(H.3)] 
$\mathfrak{A}:=(\Omega , \mathcal{F} , \Fmath , \p )$ is a filtered probability space with a filtration
 $\Fmath ={({\mathcal{F} }_{t})}_{t \ge 0}$ satisfying usual hypotheses and 
 $W_i(t)$  are two cylindrical  Wiener processes in a separable Hilbert space ${\Kmath }_{i}$, $i=1,2$, defined on the stochastic basis $\mathfrak{A}$. 
\end{description}
\end{assumption}

\medskip \noindent
Let $W(t):= (W_1(t),W_2(t))$. Then $W(t)$ is a cylindrical Wiener process on $\Kmath :={\Kmath }_{1} \times {\Kmath }_{2}$,  on the stochastic basis $\mathfrak{A}$.
Using the maps ${G}_{1}$ and ${G}_{2}$  given in Assumption \ref{ass:data+noise_Wiener} we 
 define the  map $G$ by
\begin{equation}
G(t,\Phi )(y) \; := \; (G_1(t,\ubold )(y_1),G_2(t, \Bbold )(y_2)), 
\label{eq:G_map}
\end{equation}
where $ t \in [0,T], \;  \Phi := (\ubold ,\Bbold ) \in \Vmath ,  \; y=(y_1,y_2) \in \Kmath  $.

\medskip  
\begin{remark} \label{rem:G_properties} \
\begin{itemize} 
\item[\rm (i) ] Then
\[
G : [0,T] \times \Vmath \; \to \; \lhs (\Kmath ,\Hmath ). 
\]
The map $G$ satisfies the Lipschitz condition, i.e there exists a constant $L>0$ such that
\begin{equation}
\norm{G(t,{\Phi }_{1}) - G(t,{\Phi }_{2})}{\lhs (\Kmath,\Hmath )}{2} 
\; \le \; L \norm{{\Phi }_{1}- {\Phi }_2}{\Vmath }{2} ,
   \qquad {\Phi }_{1}, {\Phi }_{2} \in \Vmath  , \, \,  t \in [0,T] .
\label{eq:G_Lipschitz}   
\end{equation} 
\item[\rm (ii) ] The map $G$ satisfies the following inequality
\begin{equation}
\begin{split}
\norm{G(s,\Phi )}{\lhs ({\Kmath },{\Hmath })}{2}
\; & \le \; (2-a ) \norm{\Phi }{}{2} + \lambda \nnorm{\Phi }{\Hmath }{2} + \varrho  ,  \quad (s,\Phi  ) \in [0,T] \times \Vmath ,
\end{split}
\label{eq:G}
\end{equation}
where $\lambda := {\lambda }_{1}+{\lambda }_{2} $ and $\varrho :={\varrho }_{1} + {\varrho }_{2}$.
\item[\rm (iii) ] Let ${g}_{1}$ and ${g}_{2}$ be the maps from Assumption \ref{ass:data+noise_Wiener} and let us define
\begin{equation}
g(t,\Phi )(y) \; := \; (g_1(t,\ubold )(y_1),g_2(t, \Bbold )(y_2)), 
\label{eq:g_map}
\end{equation}
where $ t \in [0,T], \;  \Phi := (\ubold ,\Bbold ) \in \Vmath ,  \; y=(y_1,y_2) \in \Kmath  $. 
Then the map $g$ is an extension of the map $G$ to a measurable map
\[ 
g :[0,T] \times \Hmath  \to \mathcal{L}  ( \Kmath,  {\Vmath }^{\prime }) 
\]
and by \eqref{eq:G_i*} we obtain
\begin{equation} \label{eq:G*}
\sup_{\Psi \in \Vmath  ,\norm{\Psi }{\Vmath }{} \le 1 } \sup_{y\in \Kmath ,\norm{y}{\Kmath }{} \le 1}
{|\ddual{\Vmath '}{g(t,\Phi )(y)}{\Psi }{\Vmath }|}^{2} 
 \le C (1 + \nnorm{\Phi }{\Hmath }{2}) , \qquad (t,\Phi ) \in [0,T] \times \Hmath   . 
\end{equation}
Moreover, for every $\Psi  \in \Vmath $ the map ${\tilde{g}}_{\Psi }$ defined for $\Phi \in {L}^{2}(0,T;\Hmath ) $  by
\begin{equation} \label{eq:G**}
 \bigl( { \tilde{g} }_{\Psi }(\Phi )\bigr)  (t) \; := \; 
\{ \Kmath \ni y \mapsto \ddual{\Vmath '}{g(t,\Phi (t))(y)}{\Psi }{\Vmath } \in \mathbb{R} \} \in  \lhs (\Kmath ,\mathbb{R} ) ,
\quad t \in [0,T],
\end{equation}
is a continuous map from ${L}^{2}(0,T;{\Hmath }_{loc}) $ into $ {L}^{2}(0,T;\lhs (\Kmath,\mathbb{R} ) ) $.
\end{itemize}
\end{remark}

\medskip  \noindent
Using the maps  $\mathcal{A} $, $\MHD $, $\tHall $, $F$ and $G$ defined respectively  by \eqref{eq:A_acal_rel},  \eqref{eq:MHD-map}, \eqref{eq:tHall_map}, \eqref{eq:F_map} and \eqref{eq:G_map},
equations \eqref{eq:Hall-MHD_u_Levy}-\eqref{eq:Hall-MHD_u_Levy}
with the incompressibility conditions \eqref{eq:Hall-MHD_incompressibility}
and the initial conditions \eqref{eq:Hall-MHD_ini-cond}
can be rewritten as the following abstract SPDE
\begin{equation}
\begin{split} 
&  \X (t)  +  \int_{0}^{t} \bigl[  \mathcal{A} \X (s)  + \MHD  (\X (s)) +\tHall (\X (s)) \bigr] \, ds  
   \; = \; {\X }_{0} + \int_{0}^{t}  f (s) \, ds  \\
& + \int_{0}^{t}\int_{Y}  F(s,\X ({s}^{-});y ) \, \tilde{\eta }(ds,dy)
 + \int_{0}^{t} G (s,\X (s) ) \, dW(s)  , \quad t \in [0,T] .
\end{split}  \label{eq:Hall-MHD_functional_Levy}
\end{equation} 

\medskip
\begin{definition}  \rm  \label{def:mart-sol_gen}
Let Assumption \ref{ass:data+noise_Poisson}, \ref{ass:data+noise_Wiener}  and \ref{assumption-data} be satisfied. 
We say that there exists a \bf martingale solution \rm of  problem \eqref{eq:Hall-MHD_functional_Levy} 
iff there exist
\begin{itemize}
\item[$\bullet $] a stochastic basis $\bar{\mathfrak{A}}:= \bigl( \bar{\Omega }, \bar{\mathcal{F} },  \bar{\Fmath } ,\bar{\p }  \bigr) $ with a  filtration $\bar{\Fmath } = \{ {\bar{\mathcal{F} }_{t}}{\} }_{t \in [0,T]} $ satisfying the usual conditions, 
\item[$\bullet $]  a time homogeneous Poisson random measure $\bar{\eta }$ on $(Y, \ycal )$ over
$\bar{\mathfrak{A}} $ with the intensity measure $\mu $,
\item[$\bullet $] a $\Kmath $-cylindrical Wiener process $\bar{W}$  over
$\bar{\mathfrak{A}} $,
\item[$\bullet $] and an $\bar{\Fmath }$- progressively measurable process $\bX : [0,T] \times \Omega \to \Hmath $ with  paths satisfying
\begin{equation}
\bX (\cdot , \omega ) \; \in \; \Dmath ([0,T]; {\Hmath }_{w}) \cap {L}^{2}(0,T;\Vmath ) ,
\label{eq:mart-sol_regularity_Levy}
\end{equation}
for $\bar{\p } $-a.e. $\omega \in \bar{\Omega }$,
and such that 
for all $ t \in [0,T] $ and  $\phi \in \Vtest  $ the following identity holds $\bar{\p }$ - a.s.
\begin{equation} 
\begin{split}
&\ilsk{\bX (t)}{\phi}{\Hmath }  + \int_{0}^{t} \dual{\mathcal{A} \bX (s)}{\phi}{}  ds
+ \int_{0}^{t} \dual{\MHD (\bX (s))}{\phi }{}  ds 
+ \int_{0}^{t} \dual{\tHall (\bX (s))}{\phi }{}  ds
    \\
\; & = \; \ilsk{{\X }_{0}}{\phi}{\Hmath } + \int_{0}^{t} \dual{f(s)}{\varphi  }{} ds
 + \int_{0}^{t} \int_{Y}  \ilsk{F(s,\bX ({s}^{-});y ) }{\phi }{\Hmath } \, \tilde{\bar{\eta }}(ds,dy)
 \\
& \qquad   + \Dual{\int_{0}^{t}G(s,\bX (s))\,  d\bar{W}(s)}{\varphi  }{} .
\end{split}  \label{eq:mart-sol_int-identity_gen}
\end{equation}
\end{itemize}
If all the above conditions are satisfied, then the system
$ \bigl( \bar{\mathfrak{A}}, \bar{\eta } , \bar{W} ,\bX \bigr)  $
is called a \bf martingale solution \rm of problem \eqref{eq:Hall-MHD_functional_Levy}.
\end{definition}

\medskip  \noindent
Using the ideas from \cite{EM'14} and \cite{EM'21_ArX_Hall-MHD_Gauss} we can prove the following generalization of Theorem \ref{th:mart-sol_existence}.

\medskip
\begin{theorem} \label{th:mart-sol_existence_Levy}
Let Assumptions \ref{ass:data+noise_Poisson}, \ref{ass:data+noise_Wiener} and \ref{assumption-data}  be satisfied.
In particular, we assume that
 $p$ satisfies  \eqref{eqn-p_cond}, i.e.
\[
p \in [2,2+\gamma ),
\] 
where $\gamma $ is given by \eqref{eq:gamma}.
Then there exists a  martingale solution  of  problem \eqref{eq:Hall-MHD_functional_Levy} such that
\begin{description}
\item[(i)] for every $q\in [1,p]$ there exists a constant $C_1(p,q)$ such that
\begin{equation}
\bar{\e } \Bigl[ \sup_{t \in [0,T]} \nnorm{\bX (t)}{\Hmath }{q} \Bigr] \; \le \;  C_1(p,q),
\label{eq:H_estimate_Hall-MHD_q_Levy}
\end{equation}
\item[(ii)] there exists a constant $C_2(p)$ such that
\begin{equation}
\bar{E} \biggl[ \int_{0}^{T} \norm{\bX (t)}{\Vmath }{2} \, dt \biggr] \; \le \; C_2(p) .
\label{eq:V_estimate_Hall-MHD_Levy}
\end{equation}
\end{description}
\end{theorem}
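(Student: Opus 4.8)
The plan is to follow the same three–stage scheme used for Theorem \ref{th:mart-sol_existence} (Fourier truncation, stochastic compactness, and a Skorokhod representation), now carrying the deterministic force $f$ and the Wiener integral $\int_{0}^{t} G(s,\X(s))\,dW(s)$ through every step. First I would set up the truncated equation in $\Hn$ by projecting the new data as well: with $\Fn$ as in \eqref{eq:F_n}, I would add the drift $\int_{0}^{t}\Pn f(s)\,ds$ and the Gaussian term $\int_{0}^{t}\Pn G(s,\Xn(s))\,dW(s)$ to the right–hand side of \eqref{eq:Hall-MHD_truncated}. Since $\ARiesz,\BRiesz,\RRiesz$ are locally Lipschitz on $\Hn$ and, by \eqref{eq:G_Lipschitz}, $\Pn G$ is globally Lipschitz there, existence and uniqueness of a global $\Hn$–valued c\`adl\`ag solution $\Xn$ follow from the same finite–dimensional existence result invoked in Proposition \ref{prop:Hall-MHD_truncated_existence}.

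The crucial analytical point is the a priori estimate. I would apply the It\^o formula to $\nnorm{\Xn(t)}{\Hmath}{q}$ exactly as in Lemma \ref{lem:Hall-MHD_truncated_estimates}: the nonlinear terms still vanish by \eqref{eq:MHD-map_perp} and \eqref{eq:tHall-map_perp}, the Poisson contributions are handled verbatim, the force $f$ is absorbed by Young's inequality using $f_i\in L^{p}(0,T;\Vprime)$, and the genuinely new feature is the quadratic–variation correction of the Wiener integral, which produces a term of order $q\,\nnorm{\Xn}{\Hmath}{q-2}\,\norm{G(s,\Xn)}{\lhs(\Kmath,\Hmath)}{2}$. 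Bounding this via \eqref{eq:G} and absorbing the leading Dirichlet part into the viscous dissipation $q\,\nnorm{\Xn}{\Hmath}{q-2}\,\norm{\Xn}{}{2}$, one finds that the net coefficient of $\nnorm{\Xn}{\Hmath}{q-2}\,\norm{\Xn}{}{2}$ stays nonnegative precisely when $(q-1)(2-a)<2$, i.e.\ $q<2+\gamma$; this is exactly the restriction \eqref{eqn-p_cond}, and it is the step where the parameter $a$ enters. After this absorption, the stochastic Gronwall lemma (Lemma \ref{lem:A.1_Chueshov+Millet'10}) gives the bounds \eqref{eq:H_estimate_Hall-MHD_q_Levy} and \eqref{eq:V_estimate_Hall-MHD_Levy} uniformly in $n$.

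Next I would prove tightness of $\{\Law(\Xn)\}$ on $\zcal$ via Corollary \ref{cor:tigthness_criterion_cadlag_unbound}, verifying the Aldous condition \textbf{[A']} by decomposing $\Xn$ into its deterministic–drift, Poisson and Wiener constituents as in the proof of Lemma \ref{lem:X_n-tightness}. The drift and Poisson pieces are estimated exactly as before; the force contributes a term controlled in $\Vprime$ using $f\in L^{p}(0,T;\Vprime)\subset L^{2}(0,T;\Vprime)$, and the Wiener integral contributes an extra piece estimated in $\Hmath$ by the It\^o isometry together with the growth bound \eqref{eq:G} and \eqref{eq:V_estimate_Hall-MHD_Levy}, giving again an $O(\theta)$ bound. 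Tightness of the joint laws $\{\Law(\Xn,\eta_n,W_n)\}$ on the corresponding product space then follows, and I would invoke the version of the Skorokhod theorem adequate for systems carrying simultaneously a Poisson random measure and a Wiener process (as used in \cite{EM'14}) to obtain, on a new stochastic basis $\bar{\mathfrak{A}}$, variables $(\bXn,\bar\eta_n,\bar W_n)\to(\Xast,\eta_\ast,W_\ast)$ with the same laws and almost–sure convergence in $\zcal$.

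Finally I would pass to the limit in the weak formulation of Definition \ref{def:mart-sol_gen}. The convergence of the $\mathcal{A}$–, $\MHD$–, $\tHall$– and Poisson terms is identical to Lemma \ref{lem:convergence_existence_truncated}, and the deterministic force converges by a weak–strong argument. The hard part will be identifying the limit of the Wiener integral $\int_{0}^{t}\Pn G(s,\bXn(s))\,d\bar W_n(s)$ as $\int_{0}^{t}G(s,\Xast(s))\,d\bar W_\ast(s)$: since $\bXn\to\Xast$ only in $\zcal$, and $G$ is controlled in $\Vprime$ through its extension $g$, I would use the continuity of the maps $\tilde g_{\Psi}$ from \eqref{eq:G**} on $L^{2}(0,T;\Hmath_{loc})$ to pass to the limit in the scalar integrands $\ddual{\Vprime}{g(s,\bXn(s))(\cdot)}{\Psi}{\Vmath}$, and then identify the limiting stochastic integral by a martingale–representation argument exactly as in \cite{EM'14} and \cite{EM'21_ArX_Hall-MHD_Gauss}. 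Combined with the moment bounds \eqref{eq:H_estimate_Hall-MHD_q_Levy}--\eqref{eq:V_estimate_Hall-MHD_Levy}, which are preserved in the limit by Fatou's lemma and weak lower semicontinuity, this shows that $\bigl(\bar{\mathfrak{A}},\eta_\ast,W_\ast,\Xast\bigr)$ is a martingale solution of \eqref{eq:Hall-MHD_functional_Levy}, completing the proof.
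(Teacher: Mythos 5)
Your proposal is correct and follows essentially the same route the paper intends: the paper gives no detailed proof of Theorem \ref{th:mart-sol_existence_Levy}, stating only that it follows by combining the truncation--compactness--Skorokhod scheme of Theorem \ref{th:mart-sol_existence} with the treatment of the Gaussian term in \cite{EM'14} and \cite{EM'21_ArX_Hall-MHD_Gauss}, which is exactly what you outline. In particular your identification of where the restriction $p<2+\gamma$ enters is the right one, since the It\^{o} correction contributes $\tfrac{q(q-1)}{2}\nnorm{\Xn}{\Hmath}{q-2}\norm{G}{\lhs (\Kmath ,\Hmath )}{2}$ and absorbing its $(2-a)\norm{\Xn}{}{2}$ part into the dissipation $q\nnorm{\Xn}{\Hmath}{q-2}\norm{\Xn}{}{2}$ requires $(q-1)(2-a)<2$, i.e.\ $q<2+\gamma$.
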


\pagebreak

\medskip

\appendix

\section{The space of c\`{a}dl\`{a}g functions} \label{sec:Cadlag_functions}

\medskip \noindent
Let $(\Smath ,\varrho )$ be a separable and complete metric space.
Let $\Dmath ([0,T];\Smath )$  the space of all $\Smath $-valued \it c\`{a}dl\`{a}g \rm functions defined on $[0,T]$, i.e. the functions  which are right continuous and have left limits at every $t\in [0,T]$ . 
The space $\Dmath ([0,T];\Smath )$ is endowed with the Skorokhod topology.

\medskip  \noindent
\begin{remark} \label{rem:cadlag_conv}
A sequence $(\un ) \subset \Dmath ([0,T];\Smath )$ converges to $u \in \Dmath ([0,T];\Smath )$ iff there exists a sequence $({\lambda }_{n})$ of homeomorphisms of  $[0,T]$ such that ${\lambda }_{n} $ tends to the identity uniformly on $[0,T]$ and $\un  \circ {\lambda }_{n} $ tends to $u$ uniformly on $[0,T]$.
\end{remark} 

\medskip \noindent
This topology is  metrizable by the following metric ${\delta }_{T}$
\[  \label{eq:II_1.1.1_[Metivier_88]}
{\delta }_{T}(u,v) \; := \; \inf_{\lambda \in {\Lambda }_{T}} \Bigl[ \sup_{t\in [0,T]}
  \varrho  \bigl( u(t), v \circ \lambda (t)\bigr)  
  + \sup_{t\in [0,T]} |t-\lambda (t)|
 + \sup_{s\ne t} \Bigl| \log \frac{\lambda (t)-\lambda (s)}{t-s} \Bigr| \Bigr]  ,
\]
where ${\Lambda }_{T}$ is the set of increasing homeomorphisms of $[0,T]$.
Moreover, 

\noindent
$\bigl(  \Dmath ([0,T];\Smath ),{\delta }_{T}\bigr) $ is a complete metric space,
see \cite{Joffe+Metivier'86} and \cite{Jacod+Shiryaev'2003}.

\medskip  \noindent
Let us recall the notion of a \it modulus \rm of the function. It plays analogous role in the space $\Dmath ([0,T];\Smath )$ as \it the modulus of continuity \rm in the space of continuous functions $\Cmath ([0,T];\Smath )$.

\medskip  \noindent
\begin{definition}  \rm (see \cite{Metivier'88})
Let $u \in \Dmath ([0,T];\Smath )$ and let $ \delta >0 $ be given. A \bf modulus \rm of $u$ is defined by 
\begin{equation}  \label{eq:modulus_cadlag}
{w}_{[0,T],\Smath }(u,\delta ) \; : = \; \inf_{{\Pi }_{\delta }} \, \max_{{t}_{i} \in \bar{\omega }} \,
  \sup_{{t}_{i} \le s <t < {t}_{i+1} \le T } \varrho  \bigl( u(t), u(s) \bigr) ,
\end{equation}
where ${\Pi }_{\delta }$ is the set of all increasing sequences 
$
\bar{\omega } \; = \; \{ 0= {t}_{0} < {t}_{1} < ... < {t}_{n} =T  \}
$
with the following property
\[
{t}_{i+1} - {t}_{i}  \; \ge \; \delta , \qquad i=0,1,...,n-1.
\]
If no confusion seems likely, we will denote the modulus by ${w}_{[0,T]}(u,\delta )$.

\end{definition}

\medskip  \noindent
We have the following criterion for relative compactness of a subset of the space $\Dmath ([0,T];\Smath )$,
see \cite{Joffe+Metivier'86},\cite[Ch.II]{Metivier'88} and \cite[Ch.3]{Billingsley},
analogous to the Arzel\`{a}-Ascoli theorem for the space of continuous functions.

\medskip
\begin{theorem} \label{th:cadlag_compactness} \it 
A set $A \subset \Dmath ([0,T];\Smath ) $ has compact closure iff it satisfies the following two conditions:
\begin{itemize}
\item[(a) ] there exists a dense subset $J \subset [0,T]$ such that 
for every $ t \in J  $  the set $\{ u(t), \, \, u \in A  \} $ has compact closure in $\Smath $.
\item[(b) ] $ \lim_{\delta \to 0} \, \sup_{u \in A}  \, {w}_{[0,T]}(u,\delta ) =0 $.
\end{itemize}
\end{theorem}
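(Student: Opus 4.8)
The plan is to exploit that $\bigl(\Dmath([0,T];\Smath),{\delta }_{T}\bigr)$ is a complete metric space, so that a subset has compact closure if and only if it is totally bounded. Thus I would reduce the stated equivalence to the claim that $A$ is totally bounded precisely when (a) and (b) hold, and prove the two implications separately. Throughout I would use the elementary fact — valid for every single $u \in \Dmath([0,T];\Smath)$ — that ${w}_{[0,T]}(u,\delta ) \to 0$ as $\delta \downarrow 0$; this holds because a c\`adl\`ag function has, for each $\eta>0$, only finitely many jumps of size $\geq \eta$, and aligning a partition $\bar\omega$ with these jumps makes the oscillation on each block smaller than $\eta$.

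For necessity, assume $\bar A$ is compact, hence totally bounded. To obtain (a) I would first show that the union of ranges $\bigcup_{u\in A}u([0,T])$ is relatively compact in $\Smath$: given $x_n=u_n(t_n)$, compactness of $\bar A$ and of $[0,T]$ lets me pass to a subsequence with $u_n\to u$ in ${\delta }_{T}$ and $t_n\to t$, and a short argument using the defining time-changes ${\lambda }_{n}$ shows $u_n(t_n)$ subconverges to $u(t)$ or to the left limit $u(t^{-})$; either way it has a limit point. Hence $\{u(t):u\in A\}$ is relatively compact for every $t$, so (a) holds even with $J=[0,T]$. For (b) I would fix $\eps>0$, cover $A$ by finitely many ${\delta }_{T}$-balls centred at functions $v_1,\dots,v_p$, and choose $\delta$ with $\max_i{w}_{[0,T]}(v_i,\delta )<\eps$ using the single-function property; a standard estimate controlling the modulus under a small change in ${\delta }_{T}$ (see \cite[Ch.~II]{Metivier'88}) then yields $\sup_{u\in A}{w}_{[0,T]}(u,\delta )<C\eps$, which is (b).

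For sufficiency — the substantive direction — assume (a) and (b) and fix $\eps>0$. First I would upgrade (a) to \emph{uniform} range control: choosing $\delta$ with $\sup_{u\in A}{w}_{[0,T]}(u,\delta )<\eps$, every value $u(t)$ lies within $\eps$ of some $u(s)$ with $s\in J$ in the same block of the near-optimal partition, so the full range $\{u(t):u\in A,\,t\in[0,T]\}$ is totally bounded; let $K$ be its compact closure and fix a finite $\eps$-net $\{x_1,\dots,x_N\}\subset\Smath$ of $K$. Next, for each $u\in A$ I would pick a partition $0=s_0^u<\dots<s_k^u=T$ of mesh $\geq\delta$ realizing the modulus and replace $u$ by the step function $\bar u$ equal to $u(s_{j-1}^u)$ on $[s_{j-1}^u,s_j^u)$; then $\sup_t\varrho(\bar u(t),u(t))<\eps$, whence ${\delta }_{T}(u,\bar u)<\eps$ via the identity time-change, and the number of blocks is at most $\lceil T/\delta\rceil$. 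Finally I would quantize: move each jump time $s_j^u$ to a point of a fixed finite grid $r_i=iT/m$ with $T/m$ much smaller than $\delta$ through a piecewise-linear increasing homeomorphism $\lambda$, and round each value $u(s_{j-1}^u)$ to its nearest net point $x_\ell$. This produces a \emph{finite} family of canonical step functions (jumps only at grid points, values in the net), and I would show each $u\in A$ lies within $C\eps$ of one of them in ${\delta }_{T}$, giving total boundedness and hence compact closure.

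The hard part will be this last quantization step, namely controlling all three terms of ${\delta }_{T}$ simultaneously: the piecewise-linear $\lambda$ aligning the $u$-dependent jump times $s_j^u$ to the fixed grid must keep $\sup_t|t-\lambda(t)|$ and the derivative penalty $\sup_{s\ne t}|\log\frac{\lambda(t)-\lambda(s)}{t-s}|$ small, while the sup-distance between the two step functions stays below $\eps$. Making these estimates compatible is exactly the delicate interplay between the modulus ${w}_{[0,T]}$ and the time-deformation penalty in ${\delta }_{T}$; I would handle it by taking the grid mesh $T/m$ small relative to both $\delta$ and the minimal block length, following \cite{Billingsley} and \cite{Joffe+Metivier'86}, after which the claimed characterization is immediate.
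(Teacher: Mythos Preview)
The paper does not prove this theorem at all: it is stated as a classical compactness criterion for the Skorokhod space and is merely attributed to \cite{Joffe+Metivier'86}, \cite[Ch.~II]{Metivier'88} and \cite[Ch.~3]{Billingsley}. There is therefore no proof in the paper to compare against.

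Your sketch is the standard argument one finds in those references, and it is correct in outline. One point worth tightening is the ``upgrade (a) to uniform range control'' step: as written, the sampling points $s\in J$ you choose in each block depend on $u$, which does not immediately yield total boundedness of $\{u(t):u\in A,\ t\in[0,T]\}$. The clean fix is to first pick a \emph{fixed} finite set $r_1,\dots,r_M\in J$ with consecutive spacing strictly less than $\delta$; since every block of any $u$-dependent partition has length $\geq \delta$, each block contains some $r_j$, so every $u(t)$ lies within $\eps$ of a point of the finite union $\bigcup_{j=1}^{M}\{u(r_j):u\in A\}$, which is relatively compact by (a). With that adjustment the remainder of your quantization argument goes through exactly as you describe, and is precisely the proof strategy in \cite[Ch.~3]{Billingsley}.
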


\medskip 

\section{A version of the Skorohod embedding theorem} \label{sec:Skorokhod_th}

\medskip 
\noindent
In the proof of Theorem \ref{th:mart-sol_existence}  we use the following  version of the Skorohod embedding theorem following from the version due to Jakubowski \cite{{Jakubowski'98}} and the version  due to Brze\'{z}niak,  Hausenblas and Razafimandimby  \cite[Theorem C.1]{Brze+Haus+Raza'18}. 

\medskip 
\begin{cor} \label{cor:Skorokhod_J,B,H} \rm (Corollary 2 in \cite{EM'13}) \it 
 Let ${\xcal }_{1}$ be a separable complete metric space and let ${\xcal }_{2}$ be a topological space such that
there exists a sequence  $\{ {f}_{\iota }{ \} }_{\iota \in \mathbb{N} } $ of continuous functions ${f}_{\iota }:{\xcal }_{2} \to \mathbb{R} $  separating points of ${\xcal }_{2}$.
Let $\xcal := {\xcal }_{1}\times {\xcal }_{2}$ with the Tykhonoff topology induced by the projections 
$$
{\pi }_{i}: {\xcal }_{1}\times {\xcal }_{2} \to {\xcal }_{i} , \qquad  i=1,2.
$$
Let $(\Omega ,\mathcal{F} ,\p )$ be a probability space and let 
${\chi }_{n}:\Omega \to {\xcal }_{1}\times {\xcal }_{2}$, $n\in \mathbb{N} $, be a family of random variables such that the sequence $\{ \mathcal{L} aw({\chi }_{n}), n \in \mathbb{N} \} $ is tight on ${\xcal }_{1}\times {\xcal }_{2}$.
Finally let us assume that there exists a random variable $\rho :\Omega \to {\xcal }_{1}$ such that 
$\mathcal{L} aw({\pi }_{1}\circ {\chi }_{n}) = \mathcal{L} aw(\rho )$ for  all $ n \in \mathbb{N} $.

\medskip \noindent
Then there exists a subsequence $\bigl( {\chi }_{{n}_{k}} {\bigr) }_{k \in \mathbb{N} } $,
a probability space $(\bar{\Omega }, \bar{\mathcal{F} }, \bar{\p })$, a family of ${\xcal }_{1}\times {\xcal }_{2}$-valued random variables $\{ {\bar{\chi }}_{k}, \, k \in \mathbb{N}  \} $ on 
$(\bar{\Omega }, \bar{\mathcal{F} }, \bar{\p })$ and a random variable ${\chi }_{*}: \bar{\Omega } \to 
{\xcal }_{1}\times {\xcal }_{2}$ such that 
\begin{itemize}
\item[(i) ] $\mathcal{L} aw ({\bar{\chi }}_{k}) = \mathcal{L} aw ({\chi }_{{n}_{k}})$ for all $ k \in \mathbb{N} $;
\item[(ii) ] ${\bar{\chi }}_{k} \to {\chi }_{*}$ in ${\xcal }_{1}\times {\xcal }_{2}$ a.s. as $k \to \infty $;
\item[(iii) ] ${\pi }_{1} \circ {\bar{\chi }}_{k} (\bar{\omega }) = {\pi }_{1} \circ {\chi }_{*}(\bar{\omega })$ for all $\bar{\omega } \in \bar{\Omega }$.
\end{itemize}
\end{cor}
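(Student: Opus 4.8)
The plan is to reproduce the architecture of the proof of Theorem \ref{th:mart-sol_existence}, now fusing the Poisson-noise analysis of the present paper with the Wiener-noise techniques of \cite{EM'21_ArX_Hall-MHD_Gauss}. First I would set up the truncated equations in $\Hn$, this time carrying all four forcing contributions: the deterministic term $\Pn f$, the Poisson integral $\int_{0}^{t}\int_{Y}\Fn(s,\Xn({s}^{-});y)\,\tilde{\eta}(ds,dy)$, and the Gaussian integral $\int_{0}^{t}\Pn G(s,\Xn(s))\,dW(s)$, with the abstract maps supplied by Remark \ref{rem:G_properties}. Since $\MHD$ and $\tHall$ are locally Lipschitz on $\Hn$ and $F$, $G$ are globally Lipschitz by \eqref{eq:F_Lipschitz_cond} and \eqref{eq:G_Lipschitz}, existence and uniqueness of an $\Hn$-valued c\`{a}dl\`{a}g solution $\Xn$ follows from the same SDE theorem invoked in Proposition \ref{prop:Hall-MHD_truncated_existence}.

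The critical step is the a priori estimate, which is exactly what fixes the admissible range of $p$. Applying the It\^{o} formula to $\nnorm{\Xn(t)}{\Hmath}{q}$, the antisymmetry identities \eqref{eq:MHD-map_perp} and \eqref{eq:tHall-map_perp} again annihilate the nonlinear terms, while $\dual{\mathcal{A}\Xn}{\Xn}{}=\norm{\Xn}{}{2}$ supplies the dissipation $-q\,\nnorm{\Xn}{\Hmath}{q-2}\norm{\Xn}{}{2}$. The Poisson contribution and the deterministic force are controlled as in Lemma \ref{lem:Hall-MHD_truncated_estimates} and, for $f$, by H\"{o}lder and Young inequalities using $f\in{L}^{p}(0,T;\Vprime)$. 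The genuinely new ingredient is the Gaussian It\^{o} correction, which is bounded by $\tfrac{q(q-1)}{2}\nnorm{\Xn}{\Hmath}{q-2}\norm{G(s,\Xn(s))}{\lhs(\Kmath,\Hmath)}{2}$; inserting \eqref{eq:G} produces the competing term $\tfrac{q(q-1)}{2}(2-a)\nnorm{\Xn}{\Hmath}{q-2}\norm{\Xn}{}{2}$. Since $q>\tfrac{q(q-1)}{2}(2-a)$ is equivalent to $q<1+\tfrac{2}{2-a}=2+\gamma$, the hypothesis $p\in[2,2+\gamma)$ from \eqref{eqn-p_cond} is precisely what leaves a strictly positive fraction of $\norm{\Xn}{}{2}$ after absorption of the Gaussian correction into the dissipation. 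The residual lower-order terms are then handled by the stopped-time Gronwall argument of Lemma \ref{lem:A.1_Chueshov+Millet'10}, yielding \eqref{eq:H_estimate_Hall-MHD_q_Levy} and \eqref{eq:V_estimate_Hall-MHD_Levy} uniformly in $n$.

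With these uniform bounds, tightness of $\{\Law(\Xn)\}$ on $\zcal$ follows from Corollary \ref{cor:tigthness_criterion_cadlag_unbound}: I would decompose $\Xn$ into its drift, Poisson, deterministic-forcing and Gaussian pieces and verify the Aldous condition \textbf{[A']} term by term as in Lemma \ref{lem:X_n-tightness}, the forcing increment in $\Vprime$ via H\"{o}lder and $f\in{L}^{p}(0,T;\Vprime)$, and the Wiener increment in $\Hmath$ via the It\^{o} isometry and \eqref{eq:G}. The passage to the limit requires a Skorokhod theorem that transports both noises at once: I would apply Corollary \ref{cor:Skorokhod_J,B,H} with ${\xcal}_{1}={M}_{\bar{\mathbb{N}}}([0,T]\times Y)\times\mathcal{C}([0,T];U_{1})$ (the Poisson-measure space times a separable Hilbert space $U_{1}\supset\Kmath$ realizing the Wiener paths, both Polish, so that $\pi_{1}\circ\chi_{n}=(\eta,W)$ has a law independent of $n$) and ${\xcal}_{2}=\zcal$. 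This produces, on a new basis, variables $(\bar{\eta}_{n},\bar{W}_{n},\bXn)\to(\eta_{\ast},W_{\ast},\Xast)$ a.s.\ with the ${\xcal}_{1}$-components $n$-independent by property (iii); a L\'{e}vy-type characterization then shows $\eta_{\ast}$ is a time-homogeneous Poisson random measure with intensity $\mu$ and $W_{\ast}$ a $\Kmath$-cylindrical Wiener process over the limit basis.

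Finally I would identify $\Xast$ as a solution by introducing the functionals $\Lambda_{n}$ and $\Lambda$ as in \eqref{eq:Lambda_n_un_truncated} and \eqref{eq:Lambda_Hall-MHD}, augmented by the deterministic integral $\int_{0}^{t}\dual{f(s)}{\Pn\varphi}{}\,ds$ and by the Gaussian stochastic integral tested against $\varphi$, and proving term-by-term convergence in the spirit of Lemma \ref{lem:convergence_existence_truncated}. The linear, nonlinear, deterministic and Poisson terms are handled as there, using Corollaries \ref{cor:MHD-map_conv-aux} and \ref{cor:tHall-term_conv_general} together with the continuity hypotheses \eqref{eq:Fi**} and \eqref{eq:G_i**}. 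The hard part will be the Gaussian integral, where \emph{both} integrand and integrator vary with $n$: following \cite{EM'21_ArX_Hall-MHD_Gauss} I would use the a.s.\ convergence $\bXn\to\Xast$ in ${L}^{2}(0,T;{\Hmath}_{loc})$, the Lipschitz bound \eqref{eq:G_Lipschitz}, the Wiener isometry, the continuity of $\tilde{g}_{\Psi}$ from Assumption \ref{ass:data+noise_Wiener}(G.3), and the Vitali theorem to pass the stochastic integral to the limit against the common process $W_{\ast}$. Combining all convergences yields identity \eqref{eq:mart-sol_int-identity_gen} $\bar{\p}$-a.s.\ for $\varphi\in\Vast$; density of $\Vast$ in $\Vtest$ extends it to all test functions, and the energy bounds \eqref{eq:H_estimate_Hall-MHD_q_Levy} and \eqref{eq:V_estimate_Hall-MHD_Levy} transfer to $\Xast$ by weak and weak-$\ast$ lower semicontinuity, completing the proof.
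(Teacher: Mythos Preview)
Your proposal does not address the stated corollary at all. The statement in question is Corollary~\ref{cor:Skorokhod_J,B,H}, a Skorokhod-type representation theorem for random variables in a product space $\mathcal{X}_1\times\mathcal{X}_2$ with a Polish first factor and a second factor admitting a countable separating family of continuous functionals. What you have written is instead a proof sketch for Theorem~\ref{th:mart-sol_existence_Levy}, the existence of martingale solutions of the Hall-MHD system with combined L\'evy noise. You even \emph{invoke} Corollary~\ref{cor:Skorokhod_J,B,H} as a tool inside your argument, which makes it clear you have mistaken which result you were asked to prove.

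As for the corollary itself: the paper does not prove it. It is stated in Appendix~\ref{sec:Skorokhod_th} with an explicit citation to Corollary~2 of \cite{EM'13}, and the text says it ``follows from the version due to Jakubowski \cite{Jakubowski'98} and the version due to Brze\'{z}niak, Hausenblas and Razafimandimby \cite[Theorem C.1]{Brze+Haus+Raza'18}.'' There is therefore no in-paper proof to compare against. A genuine proof would combine Jakubowski's almost-sure Skorokhod representation for nonmetric spaces (which handles the $\mathcal{X}_2$ factor via the separating sequence $\{f_\iota\}$) with the construction of \cite{Brze+Haus+Raza'18} that keeps the $\mathcal{X}_1$-component fixed across the sequence; none of this appears in your proposal.
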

\noindent
In topological spaces we consider the $\sigma $-fields generated by the topology, i.e. the smallest $\sigma $-field containing the topology.

\medskip 
\noindent
In the proof of Theorem \ref{th:mart-sol_existence} we use Corollary \ref{cor:Skorokhod_J,B,H} for the space
$ {\xcal }_{2} :=  {\zcal }_{} $ defined by \eqref{eq:Z_cadlag}, i.e.,
\[
{\xcal }_{2} \; := \; {\zcal }_{}
\; = \;  {L}_{w}^{2}(0,T;\Vmath ) \, \cap \,  {L}^{2}(0,T;{\Hmath }_{loc})
\,\cap \, \Dmath ([0,T];\Uprime)
\, \cap \,  \Dmath ([0,T];{\Hmath }_{w}) .
\]

\medskip  
\begin{remark} \ \label{rem:separating_maps}   Proceeding similarly as in  \cite[Remark 2]{EM'13}, we infer that the space $(\zcal , \sigma (\zcal ))$ defined in Definition \ref{def:space_Z_cadlag} satisfies assumptions of Corollary \ref{cor:Skorokhod_J,B,H}.
\end{remark}

\medskip
\section{The spaces $\Ldn $ and the cut-off operators $\Sn $}
\label{sec:Fourier_truncation}

\medskip  \noindent
We recall crucial facts related to the Fourier truncation method, called also the Friedrichs method, see \cite[Section 4, p.174]{Bahouri+Chemin+Danchin'11}. This approach is also used, e.g., in \cite{Brze+Dha'20} and \cite{Feff+McCorm+Rob+Rod'2014}.
For details see \cite[Appendix A]{EM'21_ArX_Hall-MHD_Gauss} and references given there.

\medskip  \noindent
The Fourier transform of a rapidly decreasing function $\psi \in \mathcal{S} (\rd )$ is defined by (see \cite{Rudin}, \cite{Taylor_I'11})
\[
\widehat{\psi } (\xi ) \; := \; {(2\pi )}^{-\frac{d}{2}} \int_{\rd } {e}^{-i\xi \cdot x } \psi (x) \, dx , \qquad \xi \in \rd ,
\]
and the Fourier transform $\widehat{f}$ of a tempered distribution is defined via duality.

\medskip  \noindent
For $s\ge 0 $ the Sobolev space is defined by
\[
 {H}^{s}(\rd ) \; := \; \{ u \in {L}^{2}(\rd ): \; \; {(1+{|\xi |}^{2})}^{\frac{s}{2}} \widehat{u} \in {L}^{2}(\rd ) \}  
\]
and
\[
\norm{u}{{H}^{s}}{} \; := \; \nnorm{{(1+{|\xi |}^{2})}^{\frac{s}{2}} \widehat{u}}{{L}^{2}(\rd )}{}
\; = \; \biggl( \int_{\rd } {(1+{|\xi |}^{2})}^{s} {|\widehat{u}(\xi )|}^{2} \, d \xi {\biggr) }^{\frac{1}{2}}.
\]
(See \cite{Taylor_I'11}, \cite{Stein'1970}.) The spaces ${H}^{s}(\rd )$ are also called Lebesgue spaces and denoted by ${L}_{s}^{2}$.

\medskip  \noindent
Let
\[
{\bar{B}}_{n} \; := \; \{ \xi \in \rd : \; \; |\xi | \le n \} , \qquad n \in \mathbb{N} , 
\]
and let
\begin{equation}
\Ldn  \; := \; \{ u \in {L}^{2}(\rd ) : \; \; \supp \widehat{u} \subset {\bar{B}}_{n} \} .
\label{eq:Ld_n}
\end{equation}
On the subspace $\Ldn $ we consider the norm inherited from ${L}^{2}(\rd )$.

\medskip  \noindent
The \bf cut-off operator \rm  $\Sn $ is defined by
\begin{equation}
\Sn (u)  \; := \; {\mathcal{F} }^{-1} (\ind{{\bar{B}}_{n}} \widehat{u}) , \qquad u \in {L}^{2}(\rd ),
\label{eq:S_n}
\end{equation}
where ${\mathcal{F} }^{-1}$ denotes the inverse Fourier transform.
(See  \cite[Section 4, p.174]{Bahouri+Chemin+Danchin'11}.)

\medskip
\begin{remark} \label{rem:S_n-projection}
\rm (See \cite[Section 4, p.174]{Bahouri+Chemin+Danchin'11} and \cite{Brze+Dha'20}.)  \it 
The map
\[
\Sn  : {L}^{2}(\rd ) \; \to \; \Ldn  
\] 
is an  $\ilsk{\cdot }{\cdot }{{L}^{2}}$-orthogonal projection.
\end{remark}

\medskip  \noindent
Basic properties  of the  operators $\Sn $ are stated in the following two lemmas, see  \cite[Appendix A]{EM'21_ArX_Hall-MHD_Gauss} and references given there. 

\medskip
\begin{lemma} \label{lem:S_n-pointwise_conv}
(See \cite[Lemma A.2]{EM'21_ArX_Hall-MHD_Gauss}.)
Let  $s\ge 0 $ be fixed. 
Then for all $n \in \mathbb{N} $:
\[
\Sn  : {H}^{s}(\rd ) \; \to \; {H}^{s}(\rd )
\]
is well defined linear and bounded. Moreover, for every $ u \in {H}^{s}(\rd ) $
\begin{equation}
\norm{\Sn u}{{H}^{s}}{} \; \le \; \norm{u}{{H}^{s}}{} 
\label{eq:S_n:H^s-H^s}
\end{equation}
and
\begin{align}
\lim_{n \to \infty } \norm{\Sn u-u}{{H}^{s}}{}  \; = \; 0 . 
\label{eq:S_n_pointwise_conv}
\end{align} 
\end{lemma}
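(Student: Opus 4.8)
The plan is to exploit the fact that on the Fourier side the cut-off operator $\Sn$ acts simply by multiplication with the indicator function of the ball ${\bar{B}}_{n}$, after which both the norm bound and the convergence reduce to elementary statements about truncated integrals and Plancherel's identity.

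First I would record the key multiplier identity: by the definition \eqref{eq:S_n} of $\Sn$ together with the fact that $\mathcal{F}^{-1}$ and $\mathcal{F}$ are mutually inverse, the Fourier transform of $\Sn u$ is $\widehat{\Sn u} = \ind{{\bar{B}}_{n}} \widehat{u}$. Inserting this into the spectral definition of the $H^s$-norm given above, and using $\ind{{\bar{B}}_{n}}^2 = \ind{{\bar{B}}_{n}}$, yields
\[
\norm{\Sn u}{H^s}{2} \; = \; \int_{{\bar{B}}_{n}} {(1+{|\xi |}^{2})}^{s} {|\widehat{u}(\xi )|}^{2} \, d\xi .
\]
Since the integrand is nonnegative and ${\bar{B}}_{n} \subset \rd$, the right-hand side is bounded by the integral over all of $\rd$, which is exactly $\norm{u}{H^s}{2}$. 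This proves inequality \eqref{eq:S_n:H^s-H^s} and shows that $\Sn : {H}^{s}(\rd) \to {H}^{s}(\rd)$ is well defined and bounded with operator norm at most $1$; linearity is immediate, as $\mathcal{F}$, multiplication by $\ind{{\bar{B}}_{n}}$, and $\mathcal{F}^{-1}$ are all linear.

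Next, for the convergence \eqref{eq:S_n_pointwise_conv}, I would write $\widehat{\Sn u} - \widehat{u} = (\ind{{\bar{B}}_{n}} - 1)\widehat{u}$, so that again by the spectral definition of the norm
\[
\norm{\Sn u - u}{H^s}{2} \; = \; \int_{\{ |\xi | > n \}} {(1+{|\xi |}^{2})}^{s} {|\widehat{u}(\xi )|}^{2} \, d\xi .
\]
Here the crucial point is that the function $g(\xi) := {(1+{|\xi |}^{2})}^{s} {|\widehat{u}(\xi )|}^{2}$ belongs to ${L}^{1}(\rd)$ precisely because $u \in {H}^{s}(\rd)$, and the tail integrals of an $L^1$-function over $\{ |\xi| > n \}$ tend to $0$ as $n \to \infty$. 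I would justify this final step by the dominated convergence theorem applied to $g \cdot \ind{\{ |\xi| > n \}}$, which converges to $0$ pointwise (for each fixed $\xi$ one has $|\xi| \le n$ eventually) and is dominated by $g \in {L}^{1}(\rd)$; alternatively one may simply invoke the absolute continuity of the Lebesgue integral.

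I do not expect any genuine obstacle here: once the multiplier description of $\Sn$ on the Fourier side is in place, everything reduces to Plancherel's identity and the standard tail-of-an-integrable-function argument. The only point requiring a modicum of care is to ensure the spectral representation of $\norm{\cdot}{H^s}{}$ is used correctly for all $s \ge 0$ (including non-integer $s$), but this is guaranteed directly by the definition of the norm recalled above.
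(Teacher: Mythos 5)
Your proof is correct and is the standard argument: the paper itself does not reproduce a proof but cites \cite[Lemma A.2]{EM'21_ArX_Hall-MHD_Gauss}, where the same Fourier-multiplier computation (Plancherel-type identity for the spectral $H^s$-norm, the bound $\ind{{\bar{B}}_{n}}\le 1$, and dominated convergence for the tail integral over $\{|\xi|>n\}$) is the intended route. Nothing is missing; the observation that $g(\xi)=(1+|\xi|^2)^s|\widehat{u}(\xi)|^2\in L^1(\rd)$ is exactly the point that makes the convergence \eqref{eq:S_n_pointwise_conv} work for every $s\ge 0$.
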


\medskip
\begin{lemma} \label{lem:S_n-norm_conv}
(See \cite[Lemma A.3]{EM'21_ArX_Hall-MHD_Gauss}.)
If $s\ge 0 $ and $k>0 $, then 
\begin{equation*}
\Sn :{H}^{s+k}(\rd ) \to {H}^{s}(\rd )
\end{equation*} 
is well defined  and bounded and 
$
\nnorm{\Sn }{\mathcal{L} ({H}^{s+k},{H}^{s})}{} \; \le  \; 1 .
$
Moreover, for every $u \in {H}^{s+k}(\rd )$: 
\begin{equation}
\norm{\Sn u-u}{{H}^{s}}{2} \; \le \; \frac{1}{{(1+{n}^{2})}^{k}} \, \norm{u}{{H}^{s+k}}{2}.
\label{eq:S_n:H^s+k-H^s_est}
\end{equation}
Thus
\begin{equation}
\lim_{n\to \infty } \nnorm{\Sn -I}{\mathcal{L} ({H}^{s+k},{H}^{s})}{} \; = \; 0, 
\label{eq:S_n:H^s+k-H^s_norn-conv}
\end{equation}
where $I$ stands for the identity operator.  
\end{lemma}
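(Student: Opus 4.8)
The plan is to reduce every assertion to a single weighted-$L^2$ computation on the Fourier side, using Plancherel's theorem together with the fact that, by \eqref{eq:S_n}, the operator $\Sn$ acts as the Fourier multiplier $\ind{{\bar{B}}_{n}}$, i.e. $\widehat{\Sn u}(\xi) = \ind{{\bar{B}}_{n}}(\xi)\,\widehat{u}(\xi)$. From the very definition of the Sobolev norm this gives
\[
\norm{\Sn u}{{H}^{s}}{2} \; = \; \int_{\rd} {(1+{|\xi |}^{2})}^{s} {|\ind{{\bar{B}}_{n}}(\xi )\widehat{u}(\xi )|}^{2} \, d\xi \; = \; \int_{{\bar{B}}_{n}} {(1+{|\xi |}^{2})}^{s} {|\widehat{u}(\xi )|}^{2} \, d\xi .
\]
This identity is the common engine for all three claims, so I would establish it first and then read off each statement by choosing the appropriate elementary pointwise comparison of the Fourier weights.

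For the well-definedness and the bound $\nnorm{\Sn}{\mathcal{L}({H}^{s+k},{H}^{s})}{} \le 1$, I would note that $1+{|\xi|}^{2}\ge 1$ everywhere, so that (since $k>0$) the elementary inequality ${(1+{|\xi |}^{2})}^{s} \le {(1+{|\xi |}^{2})}^{s+k}$ holds pointwise. Inserting this into the displayed identity and enlarging the region of integration from ${\bar{B}}_{n}$ to all of $\rd$ (the integrand being non-negative) yields $\norm{\Sn u}{{H}^{s}}{2} \le \norm{u}{{H}^{s+k}}{2}$. This simultaneously shows that $\Sn$ maps ${H}^{s+k}(\rd)$ into ${H}^{s}(\rd)$ and gives the operator-norm estimate.

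The heart of the lemma is the quantitative bound \eqref{eq:S_n:H^s+k-H^s_est}. Here I would use $\widehat{\Sn u - u}(\xi) = (\ind{{\bar{B}}_{n}}(\xi) - 1)\widehat{u}(\xi) = -\ind{{\bar{B}}_{n}^{c}}(\xi)\widehat{u}(\xi)$, so that
\[
\norm{\Sn u - u}{{H}^{s}}{2} \; = \; \int_{|\xi |>n} {(1+{|\xi |}^{2})}^{s} {|\widehat{u}(\xi )|}^{2} \, d\xi .
\]
The decisive point — the mechanism that converts the extra regularity into a convergence rate — is that on the high-frequency region $\{|\xi|>n\}$ one has $1+{|\xi|}^{2} > 1+{n}^{2}$, whence ${(1+{|\xi |}^{2})}^{-k} < {(1+{n}^{2})}^{-k}$. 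Writing ${(1+{|\xi |}^{2})}^{s} = {(1+{|\xi |}^{2})}^{-k}\,{(1+{|\xi |}^{2})}^{s+k}$, factoring out the uniform constant ${(1+{n}^{2})}^{-k}$, and enlarging the integral back to $\rd$ yields exactly \eqref{eq:S_n:H^s+k-H^s_est}. Finally, \eqref{eq:S_n:H^s+k-H^s_norn-conv} is immediate: taking the supremum over $u$ in the unit ball of ${H}^{s+k}(\rd)$ in \eqref{eq:S_n:H^s+k-H^s_est} gives $\nnorm{\Sn - I}{\mathcal{L}({H}^{s+k},{H}^{s})}{} \le {(1+{n}^{2})}^{-k/2} \to 0$ as $n\to\infty$. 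I expect no serious technical obstacle; the only subtlety worth flagging is conceptual, namely that the \emph{norm} convergence in \eqref{eq:S_n:H^s+k-H^s_norn-conv}, in contrast to the merely strong convergence of Lemma \ref{lem:S_n-pointwise_conv}, is bought precisely by sacrificing the $k$ extra derivatives, which turn the rough frequency cutoff into a uniform quantitative rate.
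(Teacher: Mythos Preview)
Your proof is correct and is exactly the standard argument one expects; the paper itself does not supply a proof for this lemma but merely cites \cite[Lemma~A.3]{EM'21_ArX_Hall-MHD_Gauss}, so there is nothing to compare against beyond noting that your Fourier-side computation is the canonical route.
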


\medskip  \noindent
Let us also recall also the relation between the spaces $\Ldn $ and ${H}^{s}(\rd )$ for $s\ge 0$.
By definition, on the spaces $\Ldn $ we consider the norms inherited from the space ${L}^{2}(\rd )$, see \eqref{eq:Ld_n}.

\medskip
\begin{lemma}  \label{lem:Ld_n-H^s-relation}
(See \cite[Lemma A.4]{EM'21_ArX_Hall-MHD_Gauss}.)
For each $n \in \mathbb{N} $
\begin{equation*}
\Ldn \; {\hookrightarrow } \; {H}^{s}(\rd ) \quad \mbox{ for all } s \ge 0  
\end{equation*}
and  for every $ s \ge 0 $ and  $ u \in \Ldn :$
\begin{equation}
\norm{u}{{H}^{s}}{2} \; \le \;  {(1+{n}^{2})}^{s} \, \nnorm{u}{\Ldn }{2} .  
\label{eq:Ld_n-H^s-ineq} 
\end{equation}
\end{lemma}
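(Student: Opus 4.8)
The plan is to prove both the embedding and the quantitative inequality directly from the spectral (Fourier-side) characterisation of the $H^{s}$-norm, exploiting the fact that the Fourier transform of any $u \in \Ldn$ is supported in the ball ${\bar{B}}_{n}$. No compactness or approximation argument is needed; everything reduces to a pointwise estimate on the Fourier multiplier $\xi \mapsto {(1+{|\xi |}^{2})}^{s}$ restricted to that ball.

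First I would recall that, by the definition of the norm on ${H}^{s}(\rd )$ recorded earlier in this appendix, for any $u \in {L}^{2}(\rd )$ one has
\[
\norm{u}{{H}^{s}}{2} \; = \; \int_{\rd } {(1+{|\xi |}^{2})}^{s} \, {|\widehat{u}(\xi )|}^{2} \, d\xi .
\]
Fix $u \in \Ldn $. Since $\supp \widehat{u} \subset {\bar{B}}_{n}$ by the definition \eqref{eq:Ld_n}, the integrand vanishes (a.e.) outside ${\bar{B}}_{n}$, so the integral reduces to one over ${\bar{B}}_{n}$. On ${\bar{B}}_{n}$ we have $|\xi | \le n$, hence ${(1+{|\xi |}^{2})}^{s} \le {(1+{n}^{2})}^{s}$ for every $s \ge 0$. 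This yields
\[
\norm{u}{{H}^{s}}{2}
\; = \; \int_{{\bar{B}}_{n}} {(1+{|\xi |}^{2})}^{s} \, {|\widehat{u}(\xi )|}^{2} \, d\xi
\; \le \; {(1+{n}^{2})}^{s} \int_{{\bar{B}}_{n}} {|\widehat{u}(\xi )|}^{2} \, d\xi .
\]
To conclude I would invoke the Plancherel identity, together with the support property once more, to identify $\int_{{\bar{B}}_{n}} {|\widehat{u}(\xi )|}^{2} \, d\xi = \int_{\rd } {|\widehat{u}(\xi )|}^{2} \, d\xi = \nnorm{u}{{L}^{2}(\rd )}{2} = \nnorm{u}{\Ldn }{2}$, the last equality being the definition of the norm inherited by $\Ldn $ from ${L}^{2}(\rd )$. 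This gives exactly the asserted inequality \eqref{eq:Ld_n-H^s-ineq}, and in particular shows $\norm{u}{{H}^{s}}{} < \infty $, so that $u \in {H}^{s}(\rd )$; since $u \in \Ldn $ was arbitrary this establishes the continuous embedding $\Ldn \hookrightarrow {H}^{s}(\rd )$ for every $s \ge 0$.

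There is no substantive obstacle here: the statement is an elementary consequence of the frequency localisation. The only point demanding a little care is the interpretation of the support condition for an ${L}^{2}$-function, namely that $\widehat{u}$ vanishes almost everywhere off ${\bar{B}}_{n}$, which is what legitimises both the restriction of the domain of integration and the final application of Plancherel; this is immediate from the definition of $\supp \widehat{u}$ as the complement of the largest open set on which $\widehat{u} = 0$ a.e.
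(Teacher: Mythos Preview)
Your argument is correct and is exactly the natural one: restrict the Fourier-side integral to ${\bar{B}}_{n}$ using the support condition, bound the multiplier ${(1+{|\xi |}^{2})}^{s}$ by ${(1+{n}^{2})}^{s}$ there, and finish with Plancherel. The paper does not supply its own proof of this lemma but merely cites \cite[Lemma~A.4]{EM'21_ArX_Hall-MHD_Gauss}; your proof is the standard one and is presumably what appears in that reference.
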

\noindent
In particular,  the norm of the embedding $\Ldn \subset {H}^{s}(\rd )$ depends on $n$ and $s$.

\medskip
\begin{cor}  \label{cor:Ld_n-H^s-norm_equiv}
(See \cite[Corollary A.5]{EM'21_ArX_Hall-MHD_Gauss}.)
On the subspace $\Ldn $ the norms $\nnorm{\cdot}{\Ldn }{}$ and $\norm{\cdot }{H^s}{}$, for $s>0$, are equivalent (with appropriate constants depending on $s$ and $n$).   
\end{cor}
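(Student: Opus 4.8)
The plan is to verify the two-sided norm estimate directly on the Fourier side, exploiting the band-limited condition $\supp \widehat{u} \subset {\bar{B}}_{n}$ that defines the subspace $\Ldn$. First I would invoke Plancherel's theorem to write $\nnorm{u}{\Ldn}{2} = \nnorm{u}{L^2}{2} = \int_{\rd} {|\widehat{u}(\xi )|}^{2} \, d\xi$, and recall the definition $\norm{u}{H^s}{2} = \int_{\rd} {(1+{|\xi |}^{2})}^{s} {|\widehat{u}(\xi )|}^{2} \, d\xi$. Since $\widehat{u}$ vanishes outside ${\bar{B}}_{n} = \{ \xi \in \rd : |\xi | \le n \}$ for $u \in \Ldn$, both integrals may be restricted to the ball ${\bar{B}}_{n}$, on which the multiplier ${(1+{|\xi |}^{2})}^{s}$ is bounded away from $0$ and from $\infty$.

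Next I would record the elementary pointwise bound $1 \le {(1+{|\xi |}^{2})}^{s} \le {(1+{n}^{2})}^{s}$, valid for all $\xi \in {\bar{B}}_{n}$ and every $s \ge 0$. Multiplying by ${|\widehat{u}(\xi )|}^{2}$ and integrating over ${\bar{B}}_{n}$ yields at once
\[
\nnorm{u}{\Ldn}{2} \; \le \; \norm{u}{H^s}{2} \; \le \; {(1+{n}^{2})}^{s} \, \nnorm{u}{\Ldn}{2} , \qquad u \in \Ldn .
\]
The right-hand inequality is precisely the content of Lemma \ref{lem:Ld_n-H^s-relation}, so it may simply be quoted; the left-hand inequality follows from ${(1+{|\xi |}^{2})}^{s} \ge 1$, which holds since $s > 0$ (in fact $s \ge 0$ suffices, the norms coinciding when $s=0$). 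These two inequalities show that the identity map from $(\Ldn , \nnorm{\cdot }{\Ldn }{})$ to $(\Ldn , \norm{\cdot }{H^s}{})$ and its inverse are both bounded, i.e.\ the two norms are equivalent, with comparison constants $1$ and ${(1+{n}^{2})}^{s/2}$ depending on $s$ and $n$ exactly as asserted.

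The hard part here is essentially nonexistent: the statement is a direct consequence of the fact that elements of $\Ldn$ are supported, in frequency, inside a fixed compact ball, which forces the Sobolev weight ${(1+{|\xi |}^{2})}^{s}$ to be comparable to the constant $1$ on the relevant frequency set. The only bookkeeping to check is measurability and integrability, both immediate since $u \in {L}^{2}(\rd )$ gives $\widehat{u} \in {L}^{2}(\rd )$ by Plancherel and the weight is bounded on the compact support ${\bar{B}}_{n}$, so every integral above is finite. I would therefore present the corollary as a one-line combination of Lemma \ref{lem:Ld_n-H^s-relation} with the trivial reverse inequality rather than as an independent computation.
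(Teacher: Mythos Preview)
Your proposal is correct and is exactly the natural argument: the paper does not spell out a proof for this corollary (it merely cites \cite[Corollary A.5]{EM'21_ArX_Hall-MHD_Gauss}), but your combination of Lemma~\ref{lem:Ld_n-H^s-relation} for the upper bound with the trivial inequality $\nnorm{u}{{L}^{2}}{2} \le \norm{u}{{H}^{s}}{2}$ for the lower bound is precisely what the statement is recording.
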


\medskip

\end{document}